\documentclass{article}
\usepackage[cp1252]{inputenc}
\usepackage[OT1]{fontenc}
\usepackage[english]{babel}
\usepackage{amsmath}
\usepackage{amsfonts}
\usepackage{amssymb}
\usepackage{amsthm}
\usepackage[left=2cm,right=2cm,top=2cm,bottom=2cm]{geometry}
\usepackage{enumitem,multicol}
\usepackage{float, graphicx}

\usepackage[matrix,arrow,curve]{xy}
\usepackage{lscape,comment}

\numberwithin{equation}{section}

\def\R{\mathbb{R}}

\newtheorem{theorem}{Theorem}
\newtheorem*{remark}{Remark}
\newtheorem{lemma}{Lemma}
\newtheorem{corollary}{Corollary}
\newtheorem{definition}{Definition}
\newtheorem{prop}{Proposition}

\def\const{\operatorname{const}}
\def\spann{\operatorname{span}}
\def\sgn{\operatorname{sgn}}
\def\Id{\operatorname{Id}}
\def\Lip{\operatorname{Lip}}
\def\card{\operatorname{card}}
\def\Exp{\operatorname{Exp}}
\def\cut{\operatorname{cut}}

\newcommand{\hf}{\mathfrak{h}}

\newcommand{\Cf}{\mathfrak{C}}

\newcommand{\eq}[1]{$(\protect\ref{#1})$}
\newcommand{\be}[1]{\begin{equation}\label{#1}}
\newcommand{\ee}{\end{equation}}

\newcommand{\twofiglabel}[6]
{
\begin{figure}[htbp]
\includegraphics[width=0.47\textwidth]{#1}
\hfill
\includegraphics[width=0.47\textwidth]{#4}
\\
\parbox[t]{0.45\textwidth}{\caption{#2}\label{#3}}
\hfill
\parbox[t]{0.45\textwidth}{\caption{#5}\label{#6}}
\end{figure}
}

\renewcommand\Vec{\operatorname{Vec}}

\graphicspath{{figures/}}


\title{A sub-Finsler problem on the Cartan group\footnote{Sections 1--3 of the paper are written by E. Le Donne, 
and Sections 4--7  are written by A. Ardentov  and Yu. Sachkov. 
The work of A. Ardentov  and Yu. Sachkov is supported by the Russian Science Foundation 
under grant 17-11-01387 and performed in Ailamazyan Program Systems Institute 
of Russian Academy of Sciences.
E. Le Donne was partially supported by the Academy of Finland (grant
288501
`\emph{Geometry of subRiemannian groups}')
and by the European Research Council
 (ERC Starting Grant 713998 GeoMeG `\emph{Geometry of Metric Groups}').}}

\author{A.Ardentov\footnote{Program Systems Institute, Pereslavl-Zalessky, Russia, \tt{aaa@pereslavl.ru}}, E. Le Donne\footnote{Department of Mathematics and Statistics, P.O. Box 35,
FI-40014, University of Jyv\"askyl\"a, Finland,
\tt{ledonne@msri.org}}, 
Yu. Sachkov\footnote{Program Systems Institute, Pereslavl-Zalessky, Russia, \tt{yusachkov@gmail.com}}}

\begin{document}

\maketitle

\begin{abstract}
In this paper we study a sub-Finsler geometric problem on the free-nilpotent group of rank 2 and step 3. 
Such a group is also called Cartan group and has a natural structure of Carnot group, which we metrize considering the $\ell_\infty$ norm on its first layer.
We adopt the point of view of   time-optimal control theory.
We characterize extremal curves via Pontryagin maximum principle. We describe abnormal and singular arcs, and construct the bang-bang flow. 
\end{abstract}

2010 {\em Mathematics Subject Classification.}
{
53C17, 
43A80, 
22E25, 
 22F30, 
14M17, 
49J15.  
}

\section{Introduction}

Sub-Finsler geometry is a natural generalization of Finsler geometry, sub-Riemannian geometry, and hence Riemannian geometry. 
Sub-Finsler structures   appear in geometric group theory, in the theory of 
isometrically homogeneous geodesic spaces, and
  in different applications in control theory; see 
 \cite{pansu, Breuillard-LeDonne1},  \cite{b1, b2, LeDonne_characterization}, and
   \cite{boscain3level}, respectively.

Of a special importance is the fact that Lie groups equipped with sub-Finsler structures appear in geometric group theory as asymptotic cones of nilpotent finitely generated groups. 
Indeed, in \cite{pansu} Pansu established that the asymptotic cones of  finitely generated 
nilpotent discrete groups equipped with word metrics are Carnot groups equipped with left-invariant 
sub-Finsler metric.
We remark that such metrics come from structures that are never sub-Riemannian since the norms are characterized by convex hulls of finitely many points.
Hence,
 the typical example is the $\ell_{1}$  norm. Notice that in rank-2 groups, this   norm  differs from the $\ell_{\infty}$ just by a change of variable.

Our paper gives a contribution towards the 
understanding of the geometry 
of $\ell_{\infty}$ sub-Finsler spaces.
Some natural problems are the regularity  of spheres and of geodesics.
For instance,  an unsolved problem is whether 
any pair of points  
can always be connected by a piecewise smooth
 length-minimizing curve.
 If this is the case, 
 one would like to know if the number of such pieces is
 uniformly bounded.
These are fundamental questions coming directly from the asymptotic study of nilpotent finitely generated groups.
Indeed, there are conjectures about asymptotic expansions for the volume
growth  of balls of large radii that are related to  
the rectifiability of spheres and to 
the above-mentioned regularity of geodesics 
 for the 
asymptotic cone, see \cite{Breuillard-LeDonne1}.

The problem of finding  length-minimizing curves in  sub-Finsler Lie groups    can be   reformulated as a 
time-optimal 
control problem for a system that is linear in the controls.
A formal introduction is given in \cite{BBLDS}.  
In particular, the existence of time-minimizers  is a classical consequence of Filippov's theorem. However, there are no general regularity results, except the recent paper \cite{Hakavuori_LeDonne_2018}.

The purpose of this paper is to consider a specific group and in it characterize extremal curves.

\bigskip

We study the unique $\ell_\infty$ sub-Finsler geometric problem on the free-nilpotent group of rank 2 and step 3. 
Such a group is also called Cartan group and has a natural structure of Carnot group.
In coordinates its  distribution can be expressed   by the span of two vector fields $X_1$, $X_2$.
We consider the $\ell_\infty$   norm with respect to   $X_1$, $X_2$.

The paper has the following structure. In Sec. 2 we state the problem and notice existence of minimizers. In Sec.~3 we apply Pontryagin maximum principle to the problem. In Sec. 4 we describe optimal abnormal trajectories. Further, in Sec. 5 we define different types of normal extremal arcs: bang-bang, singular, and mixed ones. 

In Sec. 6 we describe singular arcs; all singular trajectories are shown to be optimal. Moreover, we describe the fix-time attainable set via singular trajectories (this set coincides with the part of the sub-Finsler sphere filled by singular trajectories). We obtain explicit description of this set and prove that it is semi-algebraic.   

In Sec. 7 we study bang-bang trajectories. We describe the phase portrait of the Hamiltonian system corresponding to bang-bang trajectories and construct the bang-bang flow that generates these trajectories.

Finally, in Sec. 8 we discuss questions for further research.
 

We remark that    the nilpotency of the group
simplifies considerably the problem. For example, in our case bang-bang trajectories  have piecewise polynomial coordinates.

\bigskip

We mention that there are a few other works that consider the view point of sub-Finsler geometry.
Apart from the previously mentioned ones,
in the papers
\cite{clellandmoseley06, clellandmoseley07}
the authors study the sub-Finsler geometry, such as geodesics and 
rigid curves,
in three-dimensional manifolds and in 
 Engel-type manifolds. However, in those papers 
 there is an assumption that is classical in Finsler geometry:
 the norm is assumed to be smooth outside the zero section and strongly convex. 
 The present paper deals with the case where these assumptions are not satisfied. 
Another notable paper is
 \cite{cowlingmartini13}, in which the authors study the 
sub-Finsler  geometry associated with the
solutions of evolution equations given by first-order differential operators, providing one more setting where sub-Finlser geometry appears naturally.

\section{Problem statement. Existence of solutions}
Consider the 5-dimensional nilpotent Lie algebra $L = \spann (X_1, \dots, X_5)$ with the nonzero brackets
\begin{equation}
\label{b2}
[X_1, X_2] = X_3, \quad [X_1, X_3] = X_4, \quad [X_2, X_3] = X_5,
\end{equation}
whose multiplication table \eqref{b2} is depicted in Fig.~\ref{fig:b2}.
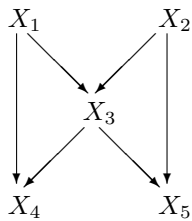
\begin{figure}[h]
\setlength{\unitlength}{1cm}

\begin{center}
\begin{picture}(4, 4)(0, 1)
\put(1.15, 3.9){ \vector(1, -1){0.8}}
\put(1, 3.9){ \vector(0, -1){2}}
\put(2.85, 3.9){ \vector(-1, -1){0.8}}
\put(3, 3.9){ \vector(0, -1){2}}
\put(1.9, 2.65){ \vector(-1, -1){0.8}}
\put(2.1, 2.65){ \vector(1, -1){0.8}}


\put(1, 1.5) {$X_4$}
\put(3, 1.5) {$X_5$}
\put(1, 3.98) {$X_1$}
\put(3, 3.98) {$X_2$}
\put(2, 2.75) {$X_3$}

\end{picture}
\end{center}
\caption{Cartan algebra}
\label{fig:b2}
\end{figure}
The Lie algebra $L$ is the free nilpotent Lie algebra of step 3 with 2 generators, it is
called the Cartan algebra.
Further, let $M$ be the connected simply connected Lie group with the Lie algebra $L$; $M$ is called the Cartan group. We will use the following model:
$$ M = \mathbb{R}^5_{x,y,z,v,w}, $$
with the Lie algebra $L$ modeled by left-invariant vector fields on $\mathbb{R}^8$:
\begin{align*}
&X_1 = \frac{\partial}{\partial x} - \frac y2 \frac {\partial}{\partial z} - \frac{x^2 + y^2}{2} \frac{\partial}{\partial w}, \\
&X_2 = \frac{\partial}{\partial y} + \frac x2 \frac {\partial}{\partial z} + \frac{x^2 + y^2}{2} \frac{\partial}{\partial v}, \\
&X_3 = \frac{\partial}{\partial z} + x \frac{\partial}{\partial v} + y \frac{\partial}{\partial w}, \\
&X_4 = \frac{\partial}{\partial v}, \\
&X_5 = \frac{\partial}{\partial w}.
\end{align*}
The product rule in the Cartan group $M$ in this model is given in \cite{dido_exp}.
Left-invariant $\ell_{\infty}$ sub-Finsler problem on the Cartan group is stated as follows:
\begin{align}
&\dot{q} = u_1 X_1 + u_2 X_2, \quad q \in M, \quad u \in U = \{ u \in \mathbb{R}^2 \mid  {\lVert u \rVert}_\infty \le 1 \}, \label{sys}\\
\nonumber
&\lVert u \rVert_\infty = \max (|u_1|, |u_2|), \\
&q(0) = q_0 = \Id = (0, \dots, 0), \quad q(T) = q_1, \label{bound}\\
&T \to \min. \label{T}
\end{align}
\begin{remark}
Problem \eqref{sys}--\eqref{T} is geometrically stated as the following problem in the plane $\mathbb{R}^2_{xy}$.

Let a point $(x_1, y_1) \in \mathbb{R}^2$, a number $S \in \mathbb{R}$, and a point $c \in \mathbb{R}^2$ be given. The point $(x_1, y_1)$ is connected with the origin by a curve $\gamma_0 \subset \mathbb{R}^2$. One should find a curve $\gamma = \{ (x(t), y(t)) \mid t \in [0, T] \}$, with velocity $\lVert (\dot{x} (t), \dot{y}(t)) \rVert_\infty \le 1$ that connects the origin with the point $(x_1, y_1)$, bounds together with the curve $\gamma_0$ a domain with oriented area $S$ and center of mass $c$, for which the time of motion $T$ is minimal.
\end{remark}

Rashevsky-Chow theorem \cite{notes} implies complete controllability of system \eqref{sys}, and Filippov theorem \cite{notes} implies existence of optimal controls in the time-optimal problem \eqref{sys}--\eqref{T}.
\section{Pontryagin Maximum Principle}

We apply Pontryagin Maximum Principle (PMP) to Problem \eqref{sys}--\eqref{T}. 

Denote points of the cotangent bundle of $M$ as $\lambda \in T^* M$. Introduce linear-on-fibers Hamiltonians $h_i (\lambda) = \langle \lambda, X_i \rangle$, $i = 1, \dots, 5$, and the Hamiltonian of PMP 
$$ h^\nu_u (\lambda) = \langle \lambda, u_1 X_1 + u_2 X_2  \rangle + \nu = u_1 h_1 (\lambda) + u_2 h_2 (\lambda) + \nu, \qquad \lambda \in T^*M, \ u \in U, \ \nu \in \R. $$
 Denote by $\vec{h}_i \in \Vec(T^* M)$ the Hamiltonian vector field corresponding to the Hamiltonian function $h_i$.
 \begin{theorem}
[PMP \cite{PBGM, notes}]\label{PMP}
If a control $u(t)$ and the corresponding trajectory $q(t), t \in [0, T]$, are optimal in Problem~\eq{sys}--\eq{T}, then there exist a curve $\lambda \in \Lip ([0, T], T^* M)$, $\lambda_t \in T_{q(t)} ^* M$, and a number $\nu \le 0$ such that the following conditions hold:
\begin{align}
&\dot{\lambda}_t = u_1(t) \vec{h}_1({\lambda}_t) + u_2(t) \vec{h}_2({\lambda}_t), \label{Hamsys}\\
&u_1 (t) h_1 ({\lambda}_t) + u_2 (t) h_2 ({\lambda}_t) = \max_{v \in U} (v_1 h_1 ({\lambda}_t) + v_2 h_2 ({\lambda}_t)) = H ({\lambda}_t), \label{PMPmax}\\
\nonumber
&H (\lambda) := (|h_1| + |h_2|) (\lambda),\\
&{\lambda}_t \ne 0, \label{lne}\\
&h_{u(t)}^{\nu} ({\lambda}_t) = H({\lambda}_t) +\nu \equiv 0.
\end{align}
\end{theorem}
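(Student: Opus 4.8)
The statement is the specialization of the classical Pontryagin Maximum Principle to our control-affine, free-time problem, so the plan is to invoke the abstract PMP as a black box and then carry out the two computations specific to the $\ell_\infty$ setting: the explicit form of the maximized Hamiltonian and the vanishing identity.

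First I would apply the standard time-optimal PMP of \cite{PBGM, notes} to system \eqref{sys}--\eqref{T}. Since the dynamics $\dot q = u_1 X_1 + u_2 X_2$ are affine in the control and the running cost equals $1$, the control-dependent Hamiltonian is exactly $h^\nu_u(\lambda) = u_1 h_1(\lambda) + u_2 h_2(\lambda) + \nu$ with $\nu \le 0$, and the abstract theorem produces a nontrivial Lipschitz covector curve $\lambda_t \in T^*_{q(t)} M$ whose evolution is governed by the Hamiltonian field of $h^\nu_{u(t)}$. Because $\nu$ is a constant, adding it does not change the Hamiltonian vector field, so the latter equals $u_1(t)\vec h_1 + u_2(t)\vec h_2$; this is precisely \eqref{Hamsys}.

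The problem-specific step is to evaluate $\max_{v \in U}(v_1 h_1(\lambda) + v_2 h_2(\lambda))$. Since $U = \{\|v\|_\infty \le 1\}$ is the box $[-1,1]^2$ and the objective is linear and separable, the maximization splits into two independent one-dimensional problems; each $v_i h_i$ is maximized over $|v_i| \le 1$ at $v_i = \sgn(h_i(\lambda))$ with value $|h_i(\lambda)|$. Summing gives $H(\lambda) = |h_1|(\lambda) + |h_2|(\lambda)$ together with the maximizing control $u_i(t) = \sgn(h_i(\lambda_t))$, which is \eqref{PMPmax}. I would record here that the maximizer is unique exactly when both $h_1(\lambda_t)$ and $h_2(\lambda_t)$ are nonzero; the degeneracy on the switching surfaces $\{h_i = 0\}$ is irrelevant to the value of $H$ but is precisely what later separates bang-bang from singular arcs. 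Finally, the autonomy of the problem makes the maximized Hamiltonian $H(\lambda_t) + \nu$ constant along the extremal, and the transversality condition for the free optimal time $T$ forces this constant to vanish, giving $h^\nu_{u(t)}(\lambda_t) = H(\lambda_t) + \nu \equiv 0$.

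The one point that requires genuine care, rather than a routine appeal to the abstract theorem, is the strengthening of the nontriviality clause to \eqref{lne}. The abstract PMP only yields $(\lambda_t, \nu) \ne 0$, so I must rule out $\lambda_t = 0$. Here I would use the left-invariant structure: in the left trivialization $\lambda_t \leftrightarrow (q(t), h_1(t), \dots, h_5(t))$, the vertical components satisfy a closed linear homogeneous system driven by the structure constants \eqref{b2}, so if $\lambda_{t_0} = 0$ at a single instant then $\lambda_t \equiv 0$. The identity $H(\lambda_t) + \nu \equiv 0$ would then force $\nu = -(|h_1| + |h_2|) = 0$, contradicting $(\lambda_t, \nu) \ne 0$. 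Hence $\lambda_t \ne 0$ throughout. I do not anticipate any deeper obstacle: the substance of the statement is the classical PMP, and the single nonroutine computation is the decoupled box maximization producing $|h_1| + |h_2|$.
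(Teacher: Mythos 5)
Your proposal is correct and matches the paper's treatment: Theorem~\ref{PMP} is stated there as a direct citation of the classical time-optimal PMP from \cite{PBGM, notes}, with no proof given, and your write-up simply supplies the routine specialization (the box maximization yielding $H=|h_1|+|h_2|$, the vanishing of the maximized Hamiltonian for free terminal time, and the upgrade of nontriviality to $\lambda_t\ne 0$ via the linear homogeneous vertical subsystem \eqref{Ham_sys1}--\eqref{Ham_sys4}). All of these steps are sound, so nothing further is needed.
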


The following two cases should be distinguished: 
\begin{itemize}
\item[(A)] $\nu = 0 \Leftrightarrow $ extremal $\lambda_t$ is abnormal $ \Leftrightarrow H(\lambda_t) \equiv 0$, 
\item[(N)] $\nu < 0 \Leftrightarrow $ extremal $\lambda_t$ is normal $ \Leftrightarrow H(\lambda_t) \equiv \const  > 0$.
\end{itemize}

The Hamiltonian system of PMP~\eq{Hamsys} reads in coordinates $(h_1, \dots, h_5 ; q)$ as follows:
\begin{align}
&\dot{h}_1 = -u_2 h_3, \label{Ham_sys1}\\
&\dot{h}_2 = u_1 h_3, \label{Ham_sys2}\\
&\dot{h}_3 = u_1 h_4 + u_2 h_5,\label{Ham_sys3}\\
&\dot{h}_4 = \dot{h}_5 = 0, \label{Ham_sys4}\\
&\dot{q} = u_1 X_1 + u_2 X_2. \label{Ham_sys5}
\end{align}

\begin{lemma} \label{lem:Casimir}
The dual of the Lie algebra $L^* = T_{\Id}^*M$ has Casimir functions $h_4$, $h_5$, $E = \frac{h_3^2}{2} + h_1 h_5 - h_2 h_4$.
\end{lemma}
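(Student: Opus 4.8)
The plan is to use the Lie--Poisson structure on $L^* = T_{\Id}^* M$, under which the linear-on-fibers functions $h_i$ satisfy $\{h_i, h_j\} = h_{[X_i, X_j]}$. First I would read the nonzero brackets off the multiplication table \eqref{b2} (equivalently, off the Hamiltonian system \eqref{Ham_sys1}--\eqref{Ham_sys4} with $H = u_1 h_1 + u_2 h_2$), obtaining
\[
\{h_1, h_2\} = h_3, \qquad \{h_1, h_3\} = h_4, \qquad \{h_2, h_3\} = h_5,
\]
with all remaining brackets of basis functions equal to zero. Recall next that $C$ is a Casimir precisely when $\{C, f\} = 0$ for every $f$, and that by bilinearity and the Leibniz rule it suffices to verify $\{C, h_i\} = 0$ for $i = 1, \dots, 5$, since the coordinate functions $h_i$ generate the function algebra.

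For $h_4$ and $h_5$ the claim is immediate: neither $X_4$ nor $X_5$ occurs on the left-hand side of any bracket in \eqref{b2}, i.e.\ they are central in $L$, so $\{h_4, h_i\} = \{h_5, h_i\} = 0$ for all $i$. For $E = \frac{h_3^2}{2} + h_1 h_5 - h_2 h_4$ I would then compute $\{E, h_i\}$ term by term via the Leibniz rule, the cases $i = 4, 5$ being automatic from the centrality just noted. For $i = 1$ the two nonzero contributions are $\{\tfrac12 h_3^2, h_1\} = h_3\{h_3, h_1\} = -h_3 h_4$ and $\{-h_2 h_4, h_1\} = -h_4\{h_2, h_1\} = h_3 h_4$, which cancel; for $i = 2$ one pairs $\{\tfrac12 h_3^2, h_2\} = -h_3 h_5$ against $\{h_1 h_5, h_2\} = h_5 h_3$, again cancelling; and for $i = 3$ the terms $\{h_1 h_5, h_3\} = h_5 h_4$ and $\{-h_2 h_4, h_3\} = -h_4 h_5$ cancel. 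Hence $\{E, h_i\} = 0$ for every $i$, so $E$ is a Casimir.

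The computation is routine once the bracket relations are fixed, so the only genuine point of care is the sign convention for the Lie--Poisson bracket (whether $\{h_i, h_j\}$ equals $h_{[X_i, X_j]}$ or its negative) together with the antisymmetry of the bracket; I would pin this down by matching against \eqref{Ham_sys1}--\eqref{Ham_sys4}, which is the one place where a sign error could silently propagate through the cancellations. Beyond this bookkeeping I expect no substantive obstacle.
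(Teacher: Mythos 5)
Your proposal is correct and follows essentially the same route as the paper, which simply asserts $\{h_4,h_i\}=\{h_5,h_i\}=\{E,h_i\}=0$ for $i=1,\dots,5$ as its entire proof; you have merely written out the Lie--Poisson bracket computations that the authors leave implicit. The term-by-term cancellations you record are all correct, and your remark about the sign convention is moot for the conclusion, since whether a function is a Casimir is unaffected by an overall sign change of the bracket.
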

\begin{proof}
$\{ h_4, h_i \} = \{ h_5, h_i \} = \{ E, h_i \} = 0, \quad i = 1, \dots, 5.$
\end{proof}
Thus Hamiltonian system \eqref{Ham_sys1}--\eqref{Ham_sys5} has, in addition to $h_4$ and $h_5$, also integral $E$.
\begin{lemma} \label{lem:ui=+-1}
If there exist $i \in \{ 1, 2 \}$ for which $u_i (t) \equiv 1$ or $u_i (t) \equiv -1$, then the control $u(t)$ is optimal.
\end{lemma}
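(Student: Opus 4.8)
The plan is to reduce the question to a one-dimensional observation about one of the first two coordinates. From the explicit form of $X_1,X_2$ one reads off that $X_1$ is the only field carrying a $\frac{\partial}{\partial x}$ term and $X_2$ the only one carrying a $\frac{\partial}{\partial y}$ term, so along any admissible trajectory of \eqref{sys} we have $\dot x = u_1$ and $\dot y = u_2$. With $q(0)=\Id$ this integrates to $x(T)=\int_0^T u_1\,dt$ and $y(T)=\int_0^T u_2\,dt$.

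The second step is a universal lower bound on the time coming purely from the constraint set. Since $U=\{\lVert u\rVert_\infty\le 1\}$ forces $|u_1|\le 1$ and $|u_2|\le 1$ pointwise, any admissible trajectory steering $\Id$ to an endpoint with first coordinates $(x_1,y_1)$ in time $\tilde T$ obeys $|x_1|=\bigl|\int_0^{\tilde T}u_1\,dt\bigr|\le\tilde T$ and, likewise, $|y_1|\le\tilde T$. In other words, reaching that endpoint costs at least $\max(|x_1|,|y_1|)$ units of time, no matter which admissible control is used.

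Finally I would show that a saturated control realizes this bound. If, say, $u_1\equiv 1$ on $[0,T]$, then the endpoint satisfies $x_1=x(T)=T$, hence $|x_1|=T$; by the previous step every competitor reaching the same endpoint needs time $\ge T$. Since the given trajectory does it in time exactly $T$, it is time-optimal. The cases $u_1\equiv -1$ and $u_2\equiv\pm 1$ follow verbatim after replacing $x$ by $-x$ or $y$ by $\pm y$.

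I do not expect a substantive obstacle here: the argument is just a projection of the dynamics onto a coordinate axis on which the $\ell_\infty$ ball becomes the interval $[-1,1]$. The only thing to note is that saturating, e.g., $u_1$ automatically fixes the sign of $x_1$ (so $u_1\equiv 1$ is consistent only when $x_1=T>0$), which is a feature of the situation rather than an extra hypothesis. Notably, nothing about the bracket relations \eqref{b2} or the higher coordinates $z,v,w$ enters, which is exactly why the optimality conclusion holds unconditionally.
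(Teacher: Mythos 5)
Your argument is correct and is exactly the paper's proof, made explicit: the paper's one-line justification that ``$x(t)$ changes with maximum possible velocity'' is precisely your observation that $\dot x=u_1\le 1$ forces $|x_1|\le\tilde T$ for every competitor, while the saturated control attains $|x_1|=T$. Nothing further is needed.
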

\begin{proof}
If $u_1 (t) \equiv 1$ (resp. $-1$), then the coordinate $x(t)$ changes with maximum (resp. minimum) possible velocity. Similarly for $u_2 (t)$ and $y(t)$.
\end{proof}

Denote by $\mathcal{A}_{q_0}^{\mathrm{sing}} (T)$ the attainable set of system~\eq{sys} for time $T > 0$ along trajectories starting from point $q_0$ with control $u_i(t) \equiv 1$ or $u_i(t) \equiv -1$ for  $i \in \{1,2\}$.

\begin{definition}
We call a control $u(t)$ and the corresponding trajectory $q(t)$ {\em geometrically optimal}, if $u_i(t) \equiv 1$ or $u_i(t) \equiv -1$ for some $i \in \{1,2\}$ and trajectory $q(t)$ ends at the boundary of set $\mathcal{A}_{q_0}^{\mathrm{sing}} (T)$, i.e., $q(T) \in \partial \mathcal{A}_{q_0}^{\mathrm{sing}} (T).$
\end{definition}

In order to describe the boundary of $\mathcal{A}_{q_0}^{\mathrm{sing}} (T)$, we apply the geometric formulation of PMP~\cite{notes}. It has the following formulation for Cauchy problem~\eq{sys}--\eq{bound} with condition $u_2(t) \equiv 1$, similar to the formulation of Theorem~\ref{PMP}.   
	
\begin{theorem}[Geometric formulation of PMP~\cite{notes}]\label{PMPgeom}
If a control $u(t)$ with condition $u_2(t) \equiv 1$ and the corresponding trajectory $q(t), t\in[0,T]$, are geometrically optimal in problem~\eq{sys}--\eq{bound}, then there exists a curve in cotangent bundle $\lambda_t \in T_{q(t)}^* M$, such that conditions~\eq{Hamsys}, \eq{lne} and the following maximum condition hold:
\begin{align} \label{maxGeom}
 u_1 (t) h_1 ({\lambda}_t) + h_2 ({\lambda}_t) = \max_{\bar{u}_1 \in [-1,1]} (\bar{u}_1 h_1 ({\lambda}_t) + h_2 ({\lambda}_t)) = H ({\lambda}_t).
\end{align}
\end{theorem}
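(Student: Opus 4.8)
The plan is to obtain Theorem~\ref{PMPgeom} as the specialization to our setting of the classical boundary-attainability form of the Maximum Principle (the version in \cite{notes} characterizing trajectories that reach the boundary of a fixed-time attainable set). The only genuine work is to check that the hypotheses apply after freezing $u_2 \equiv 1$ and that the resulting adjoint equation and maximum condition are exactly \eqref{Hamsys} and \eqref{maxGeom}.

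First I would reduce to a single-input system. Setting $u_2(t) \equiv 1$ turns \eqref{sys} into the control-affine system $\dot q = X_2(q) + u_1 X_1(q)$ with scalar control $u_1 \in [-1,1]$. Let $\mathcal{A}^+(T)$ denote its time-$T$ attainable set from $q_0$. Since $\mathcal{A}^+(T) \subseteq \mathcal{A}_{q_0}^{\mathrm{sing}}(T)$ and the interior of a subset lies in the interior of the superset, a trajectory of the reduced system with $q(T) \in \partial \mathcal{A}_{q_0}^{\mathrm{sing}}(T)$ necessarily satisfies $q(T) \in \partial \mathcal{A}^+(T)$. Thus geometric optimality in the $u_2 \equiv 1$ family amounts precisely to reaching the boundary of $\mathcal{A}^+(T)$.

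Next comes the geometric core. Using Pontryagin (needle) variations of $u_1$ at Lebesgue points $\tau \in [0,T]$, replacing $u_1(\tau)$ by an admissible value $w \in [-1,1]$ produces, to first order, the endpoint variation vector $(w - u_1(\tau))\,(P^T_\tau)_* X_1(q(\tau)) \in T_{q(T)}M$, where $P^T_\tau$ is the flow of the reference system from time $\tau$ to $T$. The nonnegative combinations of these vectors span a convex cone $K \subseteq T_{q(T)}M$. If $K$ were all of $T_{q(T)}M$, the endpoint map would be open near $q(T)$ and $q(T)$ would be interior to $\mathcal{A}^+(T)$; since instead $q(T) \in \partial\mathcal{A}^+(T)$, the cone $K$ is proper and convexity yields a nonzero $\lambda_T \in T^*_{q(T)}M$ with $\langle \lambda_T, v\rangle \le 0$ for all $v \in K$. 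I would then set $\lambda_t := (P^T_t)^* \lambda_T$; this curve solves the Hamiltonian system \eqref{Hamsys} with $u_2 \equiv 1$, and \eqref{lne} holds because $\lambda_T \neq 0$ and $P^T_t$ is a diffeomorphism.

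Finally I would convert the supporting inequality into \eqref{maxGeom}. Writing $h_1(\lambda_\tau) = \langle \lambda_\tau, X_1(q(\tau))\rangle = \langle \lambda_T, (P^T_\tau)_* X_1(q(\tau))\rangle$, the inequality $\langle \lambda_T, v\rangle \le 0$ applied to the variation vectors reads $(w - u_1(\tau))\,h_1(\lambda_\tau) \le 0$ for every $w \in [-1,1]$ and a.e. $\tau$, i.e. $u_1(\tau) h_1(\lambda_\tau) = \max_{\bar u_1 \in [-1,1]} \bar u_1 h_1(\lambda_\tau) = |h_1(\lambda_\tau)|$; adding the common term $h_2(\lambda_\tau)$ to both sides reproduces \eqref{maxGeom}, whose right-hand side $H(\lambda_t)$ equals $|h_1(\lambda_t)| + h_2(\lambda_t)$ along this extremal. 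The main obstacle is the technical heart of the geometric PMP — the convexity of the cone of variations together with the open-mapping step $K = T_{q(T)}M \Rightarrow q(T)$ interior — which I would cite from \cite{notes} rather than reprove; note also that, unlike the time-optimal Theorem~\ref{PMP}, no multiplier $\nu$ and no condition $H + \nu \equiv 0$ appear here, since the time $T$ is fixed and we require only boundary attainability, not optimality.
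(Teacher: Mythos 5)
The paper itself contains no proof of Theorem~\ref{PMPgeom}: it is stated as a specialization of the geometric formulation of PMP and attributed wholesale to \cite{notes}, so the only thing to get right is that the cited result really yields \eqref{maxGeom}. Your needle-variation sketch is the correct machinery, but applying it to the reduced single-input system $\dot q = X_2 + u_1X_1$ leaves a genuine gap in the \emph{second} equality of \eqref{maxGeom}. The separation argument only gives a covector with $(w-u_1(\tau))\,h_1(\lambda_\tau)\le 0$, i.e. $u_1h_1+h_2=\max_{\bar u_1\in[-1,1]}(\bar u_1h_1+h_2)=|h_1|+h_2$; the further claim that this equals $H(\lambda_t)=|h_1|+|h_2|$ is equivalent to $h_2(\lambda_t)\ge 0$, which you assert in the last step but never derive. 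It does not follow from your construction: since $u_2\equiv 1$ forces $y(T)=T$, every variation vector of $u_1$ is annihilated by $dy$, so $\lambda_T$ may be replaced by $\lambda_T-c\,dy$ for any $c>0$ without destroying the support property; this shifts $h_2(\lambda_t)$ by $-c$ uniformly in $t$ while leaving $h_1$ unchanged, and for large $c$ the resulting extremal satisfies \eqref{Hamsys}, \eqref{lne} and the first equality of \eqref{maxGeom} but violates the second. Relatedly, because $\mathcal{A}^+(T)\subset\{y=T\}$ has empty interior in $M$, your openness dichotomy is vacuous: the cone $K$ is automatically proper, so the argument as written produces a (possibly trivial, e.g. $\pm dy$) covector for \emph{every} $u_2\equiv1$ trajectory.

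The clean repair is to invoke the geometric PMP for the \emph{full} two-input system: a trajectory with $u_2\equiv 1$ has $y(T)=T$, which is the maximal value of $y$ over the full attainable set $\mathcal{A}_{q_0}(T)$, so $q(T)\in\partial\mathcal{A}_{q_0}(T)$ and the maximum condition runs over the whole square $U$, giving $u_1h_1+u_2h_2=\max_{v\in U}(v_1h_1+v_2h_2)=|h_1|+|h_2|=H$; with $u_2=1$ this forces $h_2\ge0$ and collapses to \eqref{maxGeom}. Alternatively, you can patch your own construction by observing that $(P^T_t)^*dy$ is itself a solution of \eqref{Hamsys} over the same base trajectory with $h_1\equiv h_3\equiv h_4\equiv h_5\equiv0$ and $h_2\equiv1$, so adding a sufficiently large positive multiple of it normalizes $h_2\ge0$ on $[0,T]$ without affecting maximality in $\bar u_1$. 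Either way, an explicit step is required where your proposal currently has only an assertion.
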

\begin{corollary}\label{CorGeom}
Hamiltonian system for geometric formulation of PMP in coordinates $(h_1, \dots, h_5;q)$ coincides with system $(\ref{Ham_sys1})$--$(\ref{Ham_sys5})$ provided $u_2 \equiv 1$.
\end{corollary}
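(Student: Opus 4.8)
The plan is to exploit the fact that the canonical Hamiltonian equation~\eqref{Hamsys} appears \emph{verbatim} in both versions of the maximum principle: Theorem~\ref{PMP} and Theorem~\ref{PMPgeom} each require the adjoint curve to satisfy $\dot\lambda_t = u_1(t)\vec h_1(\lambda_t) + u_2(t)\vec h_2(\lambda_t)$, and the two statements differ only in the admissible range of the controls and in the form of the maximization condition. Since the coordinate system~\eqref{Ham_sys1}--\eqref{Ham_sys5} was obtained precisely as the coordinate expression of~\eqref{Hamsys}, it suffices to check that this expression is insensitive to \emph{which} controls are being used, so that imposing $u_2\equiv 1$ amounts to nothing more than substitution.

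First I would record the universal computation behind~\eqref{Ham_sys1}--\eqref{Ham_sys5}. For the linear-on-fibers Hamiltonians $h_i(\lambda) = \langle\lambda, X_i\rangle$ one has the standard identity $\{h_i, h_j\} = h_{[X_i, X_j]}$, so differentiating $h_j$ along an integral curve of~\eqref{Hamsys} gives $\dot h_j = u_1(t)\{h_1, h_j\} + u_2(t)\{h_2, h_j\}$. Reading the structure constants off the bracket table~\eqref{b2}, namely $[X_1,X_2]=X_3$, $[X_1,X_3]=X_4$, $[X_2,X_3]=X_5$ and all others zero, reproduces exactly the right-hand sides of~\eqref{Ham_sys1}--\eqref{Ham_sys4}, while~\eqref{Ham_sys5} is the base-point equation~\eqref{sys}. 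Crucially, in this derivation the controls $u_1(t)$, $u_2(t)$ enter only as coefficients; no restriction on their values is used.

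It then remains to specialize. In Theorem~\ref{PMPgeom} the adjoint equation is still~\eqref{Hamsys}, and the geometric setting fixes $u_2(t)\equiv 1$ while the maximization condition~\eqref{maxGeom} constrains only $u_1$; the latter affects the \emph{choice} of the control $u_1$ but not the form of the vector-field equation. Substituting $u_2\equiv 1$ into the control-agnostic system derived above therefore yields the Hamiltonian system associated with the geometric formulation, which is exactly~\eqref{Ham_sys1}--\eqref{Ham_sys5} with $u_2\equiv 1$. I do not expect any genuine obstacle here: the entire content is the observation that~\eqref{Ham_sys1}--\eqref{Ham_sys5} holds for arbitrary measurable controls, so the corollary is a direct substitution. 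The only point demanding a moment of care is to confirm that the maximization in~\eqref{maxGeom}, unlike~\eqref{PMPmax}, does not feed back into~\eqref{Hamsys} and thereby alter the coordinate equations---which it does not, since both maximum conditions leave the adjoint equation untouched.
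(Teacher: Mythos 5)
Your proposal is correct and follows the same (implicit) route as the paper, which states the corollary without proof precisely because the adjoint equation~\eqref{Hamsys} is identical in both formulations of PMP and its coordinate form~\eqref{Ham_sys1}--\eqref{Ham_sys5}, obtained from the structure constants via $\{h_i,h_j\}=h_{[X_i,X_j]}$, is linear in the controls and independent of their admissible range. Substituting $u_2\equiv 1$ is therefore all that is needed, exactly as you argue.
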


\begin{lemma} \label{lem:|u|=1}
If a control $u(t), t \in [0, T]$, is optimal, then $\lVert u(t) \rVert_\infty \equiv 1$ for a.e. $t \in [0, T]$.
\end{lemma}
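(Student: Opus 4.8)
I would not go through the maximum principle for this, because the statement is a completely general feature of time-optimal problems for driftless systems that are positively homogeneous of degree one in the controls, and the cleanest route is an arclength reparametrization argument. The plan is to argue by contradiction: assume $u$ is optimal, with trajectory $q(\cdot)$ and optimal time $T$, but that $\lVert u(t)\rVert_\infty < 1$ on a set $A\subseteq[0,T]$ of positive Lebesgue measure. Setting
\[
L \;=\; \int_0^T \lVert u(t)\rVert_\infty \, dt,
\]
the bound $\lVert u\rVert_\infty\le 1$ together with $\lVert u\rVert_\infty<1$ on the positive-measure set $A$ gives $L<T$. It then suffices to exhibit an admissible trajectory of \eqref{sys} joining $q_0$ to $q_1$ in time $L$, which contradicts the minimality of $T$ and forces $L=T$, i.e. $\lVert u(t)\rVert_\infty=1$ for a.e.\ $t$.

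To build this faster trajectory I would reparametrize $q(\cdot)$ by its $\ell_\infty$-arclength. Since the controls are bounded and $X_1,X_2$ are smooth, $q$ stays in a compact set and is Lipschitz; the function $s(t)=\int_0^t\lVert u\rVert_\infty$ is nondecreasing and Lipschitz onto $[0,L]$. On any interval where $s$ is constant one has $u\equiv 0$, hence $\dot q=0$ there, so $q$ is constant as well, and there is a well-defined curve $\tilde q\colon[0,L]\to M$ with $q=\tilde q\circ s$. Because the system is driftless and linear in the controls, $\tilde q$ is again a trajectory of \eqref{sys} with rescaled control $\tilde u(s)=u(t)/\lVert u(t)\rVert_\infty$ at points where $u(t)\ne 0$; this satisfies $\tilde u\in U$ and $\lVert\tilde u\rVert_\infty\equiv 1$, while $\tilde q(0)=q_0$ and $\tilde q(L)=q_1$. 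Thus $\tilde q$ steers $q_0$ to $q_1$ in time $L<T$, the desired contradiction.

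The only nontrivial point — and the step I expect to be the main obstacle — is the reparametrization lemma itself: that $\tilde q=q\circ\phi$, with $\phi$ a measurable right inverse of $s$, is absolutely continuous and verifies the control equation with the rescaled control $\tilde u$. This is the standard arclength reparametrization of a rectifiable horizontal curve, and the care needed is purely in the measurability and absolute continuity bookkeeping; everything else is elementary. I would remark that in the normal case (N) the conclusion can also be read off the maximum condition directly, since $H=\lvert h_1\rvert+\lvert h_2\rvert\equiv\const>0$ implies that at each $t$ at least one of $h_1(\lambda_t),h_2(\lambda_t)$ is nonzero, and then \eqref{PMPmax} forces $u_i(t)=\sgn h_i(\lambda_t)\in\{\pm 1\}$, whence $\lVert u(t)\rVert_\infty=1$; but in the abnormal case (A) one has $h_1\equiv h_2\equiv 0$, so \eqref{PMPmax} leaves $u$ unconstrained, and the reparametrization argument is exactly what is needed to cover it uniformly.
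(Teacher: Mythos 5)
Your proposal is correct and follows exactly the paper's route: the paper's own proof is a one-line version of the same reparametrization argument (``the trajectory can be reparametrized and be passed by a time less than $T$''), which you have simply spelled out in full, including the arclength function $s(t)=\int_0^t\lVert u\rVert_\infty$ and the rescaled control. The extra care you take with the measurability and absolute-continuity bookkeeping, and the remark about the normal versus abnormal cases, are sound additions but not a different method.
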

\begin{proof}
If $\lVert u(t) \rVert_\infty < 1$ on a subset of $[0, T]$ of positive measure, then the trajectory
$q(t)$, $t \in [0, T]$, can be reparametrized and be passed by a time less than $T$.
\end{proof}

\section{Abnormal trajectories}
Let $\nu = 0$.
\begin{theorem} \label{th:abnorm}
Optimal abnormal controls have the form
\be{abnorm_opt}
 u(t) \equiv \const, \quad \lVert u(t) \rVert_\infty \equiv 1,
\ee 
and all such controls are optimal.

These controls define optimal synthesis on the abnormal manifold of the distribution $\spann(X_1, X_2)$:
$$ A = \{ e^{(u_1 X_1 + u_2 X_2)} (\Id) \mid u_i \in \mathbb{R} \} = \{ q \in M \mid z = 0, \ v = y(x^2 +  y^2)/6, \ w = - x(x^2 +  y^2)/6 \}. $$

Abnormal trajectories are one-parameter subgroups of $M$ tangent to the distribution $\spann(X_1, X_2)$.
Projections of abnormal trajectories to the plane $(x,y)$ are straight lines.
\end{theorem}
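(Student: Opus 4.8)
The plan is to read off all the pointwise constraints that the Pontryagin Maximum Principle imposes on an abnormal extremal, then show that optimality forces the control to be constant, and finally to identify the resulting trajectories by a direct integration.

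First I would run the abnormal PMP. In case (A) one has $\nu=0$, hence $H(\lambda_t)\equiv 0$; since $H=|h_1|+|h_2|$ this gives $h_1\equiv h_2\equiv 0$. Substituting into \eqref{Ham_sys1}--\eqref{Ham_sys2} yields $u_2 h_3\equiv u_1 h_3\equiv 0$, and because $\lVert u(t)\rVert_\infty\equiv 1$ a.e.\ by Lemma~\ref{lem:|u|=1} at least one control component is nonzero at a.e.\ time, so $h_3=0$ a.e., whence $h_3\equiv 0$ by continuity of $\lambda_t$. Equation \eqref{Ham_sys4} makes $h_4,h_5$ constant; since $\lambda\mapsto(h_1,\dots,h_5)$ is a fiberwise linear isomorphism, the nontriviality condition \eqref{lne} together with $h_1=h_2=h_3=0$ forces $(h_4,h_5)\neq(0,0)$. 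Finally \eqref{Ham_sys3} with $h_3\equiv 0$ leaves the single scalar constraint $u_1 h_4+u_2 h_5\equiv 0$.

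Next I would read this constraint geometrically: the control stays orthogonal to the fixed nonzero vector $(h_4,h_5)$, and the line through the origin with this normal meets the square $\{\lVert u\rVert_\infty=1\}$ in exactly the two antipodal points $\pm p$, so $u(t)\in\{p,-p\}$ and the planar projection is confined to the single line $\mathbb{R}\,p$. Writing $(x,y)=s(t)\,p$ one gets $\lVert u\rVert_\infty=|\dot s|=1$ a.e., and a short computation gives $\dot z\equiv 0$ while $v(T)$ and $w(T)$ are proportional to $s(T)^3$; thus the whole endpoint depends only on the net signed displacement $s(T)$. Consequently $T=\int_0^T|\dot s|\,dt\ge|s(T)|$, with equality exactly when $\dot s$ keeps a constant sign, i.e.\ when $u\equiv p$ or $u\equiv -p$. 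Hence an optimal abnormal control must be constant with $\lVert u\rVert_\infty=1$; conversely every such constant control is optimal directly by Lemma~\ref{lem:ui=+-1}, since one of its components equals $\pm 1$ identically, and these controls therefore realize the optimal synthesis on the set reached by the distribution.

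It remains to identify $A$ and verify the last two assertions, which I would obtain by explicitly integrating $\dot q=u_1 X_1+u_2 X_2$ from $q_0=\Id$ with $u$ constant: one finds $x=u_1 t$, $y=u_2 t$, $z\equiv 0$, $v=(x^2+y^2)y/6$, $w=-(x^2+y^2)x/6$, which gives both the stated equations cutting out $A$ and the fact that the planar projections are straight lines, while $t\mapsto e^{t(u_1X_1+u_2X_2)}(\Id)$ is by construction a one-parameter subgroup tangent to $\spann(X_1,X_2)$. I expect the main obstacle to be the middle step rather than these computations: the first-order maximum condition is vacuous in the abnormal case, reading $0=0$, so extremality alone leaves the control genuinely underdetermined on the two-point set $\{p,-p\}$, and the collapse to a single constant value must be extracted from time-optimality through the observation that any switch between $p$ and $-p$ merely retraces the one-dimensional projected path and so strictly wastes time without moving the endpoint.
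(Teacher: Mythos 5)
Your proposal is correct and follows essentially the same route as the paper: extract $h_1=h_2=h_3\equiv 0$ and $u(t)\perp(h_4,h_5)$ from the abnormal PMP, conclude constancy from optimality, and integrate the constant-control flow to identify $A$. The one place you go beyond the paper is the key middle step, which the paper merely asserts (``if an abnormal control is nonconstant, then it is not optimal''): your argument --- that nontriviality forces $(h_4,h_5)\neq 0$, so $u(t)\in\{p,-p\}$, the endpoint depends only on the net displacement $s(T)$ along the line $\mathbb{R}p$, and $T=\int_0^T|\dot s|\,dt>|s(T)|$ whenever $\dot s$ changes sign --- is a correct and complete justification of that assertion.
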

\begin{proof}
In the abnormal case we have $\nu = - H(\lambda_t) \equiv 0$. It follows from the maximality condition~\eq{PMPmax}
of PMP that $h_1 (\lambda_t) = h_2 (\lambda_t) \equiv 0$ along an abnormal extremal. For an optimal extremal $(u_1^2 + u_2^2) (t) \ne 0$, then equations~\eq{Ham_sys1}, \eq{Ham_sys2} yield the identity $h_3 (\lambda_t) \equiv 0$, whence equation \eqref{Ham_sys3} gives $u_1 (t) h_4 (\lambda_t) + u_2 (t) h_2 (\lambda_t) \equiv 0$.

Summing up, optimal abnormal extremals satisfy the conditions: $h_1 (\lambda_t) = h_2 (\lambda_t) = h_3 (\lambda_t) \equiv 0$, $\dot{h}_4 (\lambda_t) = \dot{h}_5 (\lambda_t) \equiv 0$, $u(t) \perp (h_4, h_5) (\lambda_t)$. If an abnormal control $u(t)$ is nonconstant, then it is not optimal. Lemma \ref{lem:|u|=1} implies that $\lVert u(t) \rVert_\infty \equiv 1$.

Optimality of all controls \eqref{abnorm_opt} follows from Lemma \ref{lem:ui=+-1}.

Optimal abnormal trajectories are one-parameter subgroups of the Cartan group tangent to the distribution $\spann(X_1, X_2)$, their parameterization is as follows:
\begin{gather}
\nonumber
x = u_1 t, \quad  y = u_2 t, \quad z = 0,  \quad 
v = u_2 \frac{u_1^2 + u_2^2}{6} t^3,
 \quad 
w = - u_1 \frac{u_1^2 + u_2^2}{6} t^3.
\end{gather}
\end{proof}

\section{Types of normal extremal arcs}\label{sec:types}
Now we start to consider normal extremals, i.e., we assume that $-\nu = H(\lambda_t) > 0$.

We call a normal extremal arc $\lambda_t, t \in I = (\alpha, \beta) \subset [0, T]$:
\begin{itemize}
\item a bang-bang arc if
$$ \card \{ t \in I \mid h_1 h_2 (\lambda_t) = 0 \} < \infty, $$
\item a singular arc if one of the condition holds:
\begin{align*}
&h_1 (\lambda_t) \equiv 0, \quad t \in I \quad (\textrm{$h_1$-singular arc}), \text{ or}\\
&h_2 (\lambda_t) \equiv 0, \quad t \in I \quad (\textrm{$h_2$-singular arc}),
\end{align*}
\item a mixed arc if it consists of a finite number of bang-bang and singular arcs.
\end{itemize}

Notice that a priori this list of possible types of normal arcs is not complete: e.g. there
could happen Fuller phenomenon. But a posteriori we will prove  that this list is in fact complete.

\begin{remark}
If $h_i (\lambda_t) |_{(\alpha, \beta)} \ne 0$, then $u_i (t) |_{(\alpha, \beta)} \equiv s_i := \sgn h_i (\lambda_t)|_{(\alpha, \beta)}$. Thus a bang-bang control is piecewise constant, with values in the vertices $(\pm 1, \pm 1)$ of the square $U$.
\end{remark}

\section{Singular arcs}

\subsection{Characterization of singular arcs}
\begin{theorem} \label{th:h1-sing}
Each $h_1$-singular normal arc satisfies one of the following conditions:
\begin{itemize}
\item[$(a)$] $h_1 = h_3 = h_4 = h_5 \equiv 0, \quad h_2 \equiv \const \ne 0,$\\
$\ |u_1 (t)| \le 1$, $u_2 \equiv s_2 \in \{ \pm 1 \}$,
\item[$(b)$]
 $h_1 = h_3 \equiv 0$, $|\frac{h_5}{h_4}| \le 1$, $h_4 \ne 0$, $h_2 \equiv \const \ne 0$,\\
$ u_1 (t) \equiv u_1^a = -s_2 \frac{h_5}{h_4}$, $u_2 (t) \equiv s_2 \in \{ \pm 1 \}$.
\end{itemize}
\end{theorem}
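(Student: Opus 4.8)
The plan is to extract every stated condition by repeatedly differentiating the defining identity $h_1(\lambda_t) \equiv 0$ along the arc, feeding the result back into the Hamiltonian system \eqref{Ham_sys1}--\eqref{Ham_sys4}, and combining this with the normality assumption. The argument is purely a necessity argument: I only need to read off constraints that every $h_1$-singular normal arc must satisfy, and the whole computation stays at the level of the linear-on-fibers Hamiltonians, never touching the group coordinates.

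First I would record the consequences of normality. On a normal arc $H(\lambda_t) = (|h_1| + |h_2|)(\lambda_t) \equiv \const > 0$, and since $h_1 \equiv 0$ on $I$ this forces $|h_2| \equiv \const > 0$ there. In particular $h_2$ never vanishes on $I$, so by the Remark in Section~\ref{sec:types} the second control saturates, $u_2(t) \equiv s_2 := \sgn h_2 \in \{\pm 1\}$, with $s_2$ constant because the continuous function $h_2$ cannot change sign without passing through $0$. Next I would differentiate the singular identity: from \eqref{Ham_sys1}, $0 = \dot h_1 = -u_2 h_3 = -s_2 h_3$ a.e. on $I$, and since $s_2 \ne 0$ this yields $h_3 \equiv 0$ on $I$. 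Substituting $h_3 \equiv 0$ into \eqref{Ham_sys2} gives $\dot h_2 = u_1 h_3 \equiv 0$, confirming $h_2 \equiv \const \ne 0$.

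It then remains to differentiate $h_3 \equiv 0$. Using \eqref{Ham_sys3} and the constancy of the Casimirs $h_4,h_5$ from Lemma~\ref{lem:Casimir}, I obtain the single scalar relation
\[
0 = \dot h_3 = u_1(t)\,h_4 + u_2(t)\,h_5 = u_1(t)\,h_4 + s_2 h_5 \qquad \text{a.e. on } I .
\]
The case split on $h_4$ now produces exactly the two alternatives. If $h_4 = 0$, the relation reduces to $s_2 h_5 = 0$, hence $h_5 = 0$; no constraint on $u_1$ survives beyond admissibility $|u_1(t)| \le 1$, and together with $h_1 = h_3 \equiv 0$ and $h_2 \equiv \const \ne 0$ this is case $(a)$. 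If $h_4 \ne 0$, the relation solves uniquely for the constant $u_1(t) \equiv u_1^a = -s_2 h_5/h_4$; admissibility $|u_1^a| \le 1$ is equivalent to $|h_5/h_4| \le 1$, which is case $(b)$.

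I do not expect a genuine obstacle here, since the statement is a straightforward differentiation of conserved quantities. The only point requiring care is the legitimacy of differentiating the identities: this is justified because each $h_i$ is Lipschitz (indeed absolutely continuous) along the extremal by \eqref{Hamsys}, so a function identically zero on $I$ has vanishing derivative a.e. on $I$. I would also emphasize that this theorem gives only necessary conditions for the two families; the realizability of these arcs as genuine singular extremals and their optimality are handled separately in the subsequent parts of Section~6.
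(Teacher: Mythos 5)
Your proposal is correct and follows essentially the same route as the paper's own proof: use normality plus $h_1\equiv 0$ to get $h_2\ne 0$ and hence $u_2\equiv s_2$ by the maximum condition, differentiate $h_1\equiv 0$ to obtain $h_3\equiv 0$ (whence $h_2\equiv\const$), differentiate $h_3\equiv 0$ to get $u_1 h_4 + s_2 h_5 \equiv 0$, and split on whether $h_4$ vanishes. The only (harmless) difference is that you justify the a.e.\ differentiation and the constancy of $s_2$ explicitly, which the paper leaves implicit.
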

\begin{proof}
By definition, along an $h_1$-singular arc we have: 
$$ h_1 (\lambda_t) \equiv 0, \quad H(\lambda_t)\ne 0 \quad \Rightarrow \quad h_2 (\lambda_t) \ne 0, \quad t \in I = (\alpha, \beta) \subset [0, T]. $$
By the maximality condition of PMP, $ u_2 (t) |_I \equiv s_2 \in \{ \pm 1 \}$. From the Hamiltonian system of PMP, $ h_3 (\lambda_t) = - \frac{\dot{h}_1 (\lambda_t)}{u_2 (t)} \equiv 0$, thus $h_2 (\lambda_t) \equiv \const \ne 0.$ Moreover, $u_1 (t) h_4 + s_2 h_5 = \dot{h}_3 (\lambda_t) \equiv 0.$

$(a)$ 
If $h_4 = 0$, then $h_5 = 0$, and we get statement $(a)$ of this theorem.

$(b)$
If $h_4 \ne 0$, then $u_1 (t) = u_1^a = -s_2 \frac{h_5}{h_4}$, and statement $(b)$ of this theorem follows.
\end{proof}
A similar statement for $h_2$-singular arcs holds.
\begin{theorem} \label{th:h2-sing}
Each $h_2$-singular normal arc satisfies one of the following conditions:
\begin{itemize}
\item[$(a)$]
 $h_2 = h_3 = h_4 = h_5 \equiv 0$, $h_1 \equiv \const \ne 0$, \\$u_1 \equiv s_1 \in \{ \pm 1 \}$,
$|u_2 (t)| \le 1,$
\item[$(b)$]
 $h_2 = h_3 \equiv 0$, $h_1 \equiv \const \ne 0$, $h_5 \ne 0$, $|\frac{h_4}{h_5}| \le 1$, \\
$u_1 \equiv s_1 \in \{ \pm 1 \}$, $u_2 \equiv u_2^a=-s_1 \frac{h_4}{h_5}$.
\end{itemize}
\end{theorem}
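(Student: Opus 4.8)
The plan is to mirror the proof of Theorem~\ref{th:h1-sing} almost verbatim, exploiting the symmetry between the roles of $h_1$ and $h_2$ in the Hamiltonian system \eqref{Ham_sys1}--\eqref{Ham_sys5}. First I would start from the defining condition of an $h_2$-singular arc, namely $h_2(\lambda_t) \equiv 0$ on $I = (\alpha,\beta)$. Since we are in the normal case, $H(\lambda_t) = (|h_1| + |h_2|)(\lambda_t) \ne 0$, so $h_2 \equiv 0$ forces $h_1(\lambda_t) \ne 0$ on $I$. The maximality condition \eqref{PMPmax} then pins down the first control: since $u_1(t) = \sgn h_1(\lambda_t)$ whenever $h_1 \ne 0$, we get $u_1(t)|_I \equiv s_1 := \sgn h_1(\lambda_t) \in \{\pm 1\}$.

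Next I would extract $h_3 \equiv 0$ by differentiating the singular condition. From equation \eqref{Ham_sys2}, $\dot{h}_2 = u_1 h_3$, and since $h_2 \equiv 0$ implies $\dot{h}_2 \equiv 0$ while $u_1 \equiv s_1 \ne 0$, we conclude $h_3(\lambda_t) \equiv 0$ on $I$. Feeding $h_3 \equiv 0$ back into \eqref{Ham_sys1}, $\dot{h}_1 = -u_2 h_3 \equiv 0$, so $h_1 \equiv \const \ne 0$. Then differentiating $h_3 \equiv 0$ and using \eqref{Ham_sys3} gives $\dot{h}_3 = u_1 h_4 + u_2 h_5 = s_1 h_4 + u_2 h_5 \equiv 0$, which is the single relation that will split into the two cases.

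The case split is then purely algebraic and is the analogue of the dichotomy in Theorem~\ref{th:h1-sing}. If $h_5 = 0$, then the relation $s_1 h_4 + u_2 h_5 \equiv 0$ forces $h_4 = 0$ as well, yielding case $(a)$: here $u_2(t)$ is left free subject only to $|u_2(t)| \le 1$, and since $u_1 \equiv s_1 \in \{\pm 1\}$ Lemma~\ref{lem:ui=+-1} guarantees optimality. If instead $h_5 \ne 0$, I solve the relation for the second control, obtaining $u_2(t) \equiv u_2^a = -s_1 \frac{h_4}{h_5}$; the admissibility constraint $|u_2^a| \le 1$ is exactly the condition $|\frac{h_4}{h_5}| \le 1$ stated in case $(b)$. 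This recovers both alternatives.

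I do not expect any genuine obstacle here: the statement is the exact mirror image of Theorem~\ref{th:h1-sing} under the interchange of indices $1 \leftrightarrow 2$ together with the corresponding swap $h_4 \leftrightarrow h_5$ dictated by the bracket relations $[X_1,X_3]=X_4$ and $[X_2,X_3]=X_5$ in \eqref{b2}. The only point demanding a moment of care is bookkeeping the asymmetry in the Casimir structure and in equations \eqref{Ham_sys1}--\eqref{Ham_sys2}: the sign in $\dot{h}_1 = -u_2 h_3$ differs from that in $\dot{h}_2 = u_1 h_3$, and it is $h_5$ (paired with $X_2$) rather than $h_4$ that plays the distinguished nonvanishing role. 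Once these are tracked correctly the derivation of $u_2^a = -s_1 \frac{h_4}{h_5}$ and of the constraint $|\frac{h_4}{h_5}| \le 1$ follows immediately, so I would simply write ``the proof is completely analogous to that of Theorem~\ref{th:h1-sing}'' and indicate these sign and index substitutions.
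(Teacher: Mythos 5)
Your proposal is correct and is exactly the argument the paper intends: the paper gives no separate proof of Theorem~\ref{th:h2-sing}, stating only that it is analogous to Theorem~\ref{th:h1-sing}, and your index-swapped derivation ($h_2\equiv 0 \Rightarrow h_1\ne 0 \Rightarrow u_1\equiv s_1 \Rightarrow h_3\equiv 0 \Rightarrow s_1h_4+u_2h_5\equiv 0$, then splitting on $h_5=0$ versus $h_5\ne 0$) is precisely that analogue. No issues.
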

\begin{corollary}
All singular trajectories are optimal.
\end{corollary}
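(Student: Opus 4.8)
The plan is to observe that the corollary is an immediate consequence of the structural descriptions obtained in Theorems~\ref{th:h1-sing} and~\ref{th:h2-sing}, combined with Lemma~\ref{lem:ui=+-1}. The essential point to extract is that, in \emph{every} case listed in those two theorems, at least one of the two controls is identically equal to $\pm 1$ along the singular arc.

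Concretely, I would argue as follows. First consider an $h_1$-singular trajectory. In both alternatives $(a)$ and $(b)$ of Theorem~\ref{th:h1-sing} one reads off $u_2(t) \equiv s_2 \in \{\pm 1\}$ on the whole time interval. Symmetrically, for an $h_2$-singular trajectory, both alternatives $(a)$ and $(b)$ of Theorem~\ref{th:h2-sing} give $u_1(t) \equiv s_1 \in \{\pm 1\}$. Hence in all four cases there exists an index $i \in \{1,2\}$ for which $u_i(t) \equiv 1$ or $u_i(t) \equiv -1$ throughout $[0,T]$.

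It then remains only to invoke Lemma~\ref{lem:ui=+-1}, which asserts that any admissible control satisfying $u_i(t) \equiv \pm 1$ for some $i$ is optimal: geometrically, the corresponding coordinate ($x$ when $i=1$, $y$ when $i=2$) is driven with the extreme admissible velocity, so no competing admissible trajectory can reach the same endpoint in strictly smaller time. Applying this to each of the four cases above yields that every singular trajectory is optimal.

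I do not anticipate any genuine obstacle, since the substantive analytic work has already been carried out in establishing Lemma~\ref{lem:ui=+-1} together with the two characterization theorems. The only point deserving a moment of care is the distinction between a singular \emph{arc} (a piece of a possibly mixed trajectory, defined on a subinterval) and a singular \emph{trajectory} (singular on the whole of $[0,T]$): the optimality criterion of Lemma~\ref{lem:ui=+-1} requires the relevant control to equal $\pm 1$ on the entire interval, which is precisely what holds for a genuine singular trajectory.
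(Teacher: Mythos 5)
Your proposal is correct and follows exactly the paper's route: the paper's own proof is the one-line observation that the corollary follows from Lemma~\ref{lem:ui=+-1}, and your elaboration (reading off $u_2 \equiv \pm 1$ or $u_1 \equiv \pm 1$ from the cases of Theorems~\ref{th:h1-sing} and~\ref{th:h2-sing}) is precisely the implicit content of that one line. No discrepancy to report.
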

\begin{proof}
Follows from Lemma \ref{lem:ui=+-1}.
\end{proof}
\begin{remark}
Singular trajectories of type $(b)$ are simultaneously normal and abnormal.
\end{remark}

\subsection{Attainable set via singular trajectories}
In this subsection we describe the attainable set $\mathcal{A}_{q_0}^{\mathrm{sing}} (T)$ for system~(\ref{sys}) via singular trajectories.

Notice that all four cases $u_1 \equiv 1$, $u_1 \equiv -1$, $u_2 \equiv 1$, $u_2 \equiv -1$ are symmetric, therefore it is enough to study just one of them.

We consider the case $u_2 \equiv 1$. We get the following vertical subsystem of Hamiltonian system from Corollary~\ref{CorGeom}:
\begin{equation} \label{eq:SingVerth}
\begin{cases}	
\dot{h}_1 = - h_3, \\ 
\dot{h}_2 = u_1 h_3, \\
\dot{h}_3 = u_1 h_4  + h_5, \\ \dot{h}_4 = \dot{h}_5 = 0. 
\end{cases}
\end{equation}



If $h_4 = 0$, then $h_1(t) = -\frac{h_5}{2} t^2  - h_3(0)t+h_1(0)$. It follows from maximality condition~\eq{maxGeom} that the corresponding control $u_1$ is piecewise constant and has up to two switchings in the set $\{-1,1\}$.

Further, consider the case $h_4 \neq 0$. Via the dilation symmetry along the parameter $|h_4|$, we pass to new coordinates:
\begin{align*}
\hf_i = \frac{h_i}{|h_4|}, \qquad i=1,\dots,5.
\end{align*} 

The vertical subsystem takes the form
\begin{equation} \label{eq:SingVerthf}
\begin{cases}
\dot{\hf}_1 = - \hf_3, \\ \dot{\hf}_2 = u_1 \hf_3, \\  \dot{\hf}_3 = \hf_4 u_1 + \hf_5, \\ \dot{\hf}_5 = 0, \qquad \hf_4 = \pm 1. 
\end{cases}
\end{equation}
If $\hf_1 = 0, t \in(t_0,t_1), t_0 < t_1$, then we get $\hf_3=0$ from~(\ref{eq:SingVerthf}), therefore $u_1 = - \frac{\hf_5}{\hf_4} = u_1^a$ for $t \in(t_0,t_1), t_0 < t_1$. This gives us the following case:
\begin{align}\label{singabnorm}
\hf_1=\hf_3=0, \quad \hf_4 = \pm 1, \quad \hf_5 \in [-1,1] \qquad \Rightarrow \qquad u_1 = - \frac{\hf_5}{\hf_4}.    
\end{align}

We have $u_1 = \sgn {\hf}_1$ from    maximality condition~\eq{maxGeom} for $\hf_1 \neq 0, t\in (t_0,t_1), t_0 < t_1$, and also $h_2 = H - |h_1|$, therefore we can exclude from the vertical subsystem the equation for $\hf_2$. We get
\begin{equation} \label{eq:SingVerthfu}
\begin{cases}
\dot{\hf}_1 = - \hf_3, \\ \dot{\hf}_3 = \hf_4 \sgn \hf_1 + \hf_5, \\ \dot{\hf_5} = 0, \qquad \hf_4 = \pm 1. 
\end{cases}
\end{equation}

Using the symmetry $(\hf_1,\hf_3,\hf_4,\hf_5) \mapsto (-\hf_1,-\hf_3,\hf_4,-\hf_5)$, it is enough to consider the subcases $\hf_4 = \pm 1$ with the condition $\hf_5\geq 0$. There exists a natural decomposition for the set of adjoint vectors:
\begin{align*}
\Cf^{\pm} &= \{\hf=(\hf_1,\hf_3,\hf_4,\hf_5)\in \R^4 \mid (\hf_1,\hf_3) \in \R^2, \hf_4 = \pm 1, \hf_5 \geq 0\} = \Cf_0^{\pm} \cup \Cf_{01}^{\pm} \cup \Cf_1^{\pm} \cup \Cf_{1\infty}^{\pm}, \\
\Cf_0^{\pm} &= \{\hf\in \Cf^{\pm} \mid \hf_5 = 0\}, \qquad \Cf_{01}^{\pm} = \{\hf\in \Cf^{\pm} \mid \hf_5 \in (0, 1)\}, \\
\Cf_1^{\pm} &= \{\hf\in \Cf^{\pm} \mid \hf_5 = 1\}, \qquad \Cf_{1\infty}^{\pm} = \{\hf\in \Cf^{\pm} \mid \hf_5 \in (1, +\infty)\}.
\end{align*} 

Each subset is studied with the help of phase portrait of system~(\ref{eq:SingVerthfu}) in the plane $(\hf_1,\hf_3)$. Along all trajectories of vertical subsystem~(\ref{eq:SingVerthfu}), we follow over the sign of $\hf_1$, which defines the desired control $u_1$. Subsystem~\eq{eq:SingVerthfu} has an obvious first integral $\frac{1}{2} \hf_3^2 + \hf_5 \hf_1 + \hf_4 |\hf_1| = E + h_4 H$. 

\begin{figure}[h]
\centering
\includegraphics[width=0.35\linewidth]{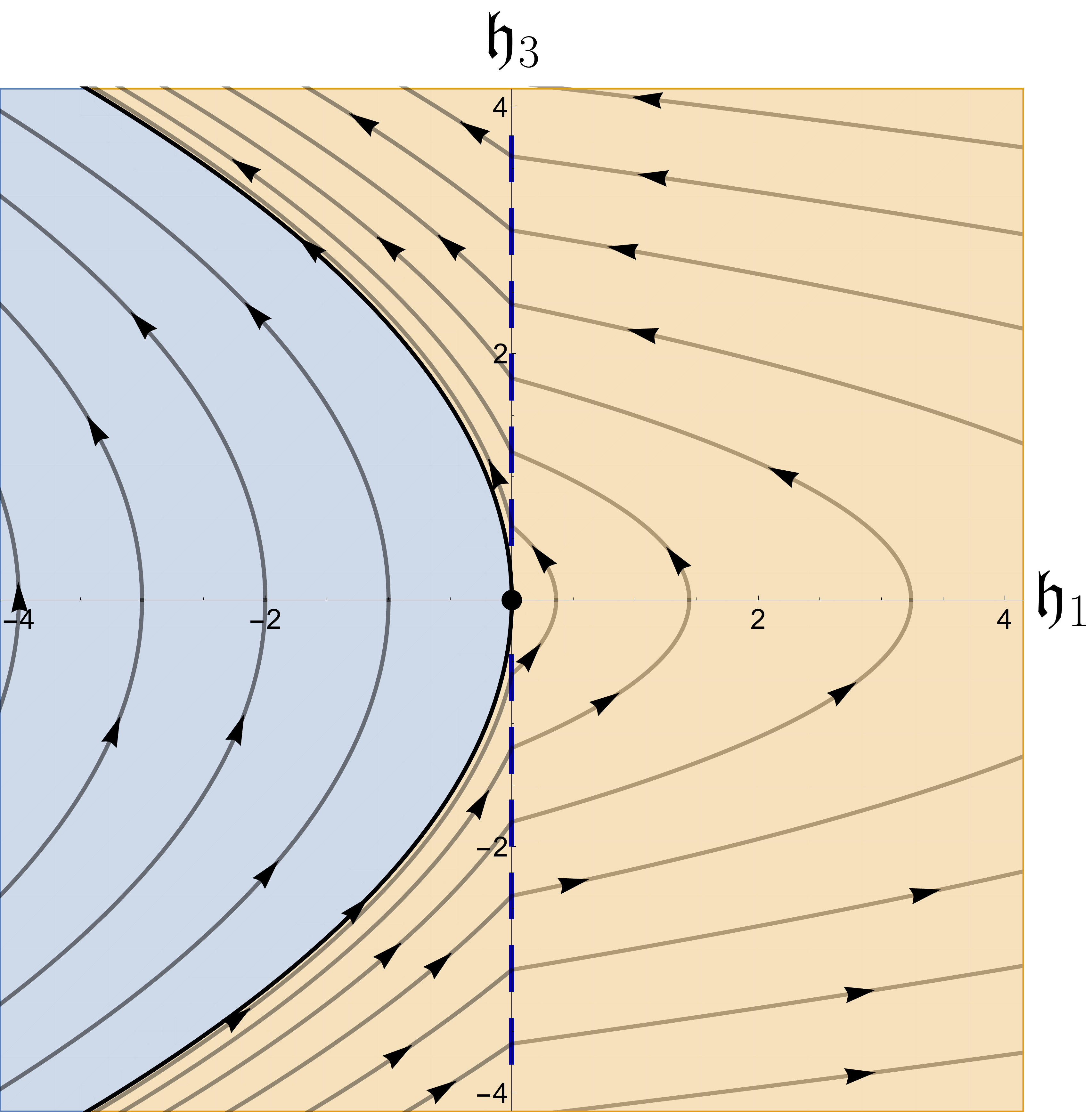}
\qquad\qquad
\includegraphics[width=0.35\linewidth]{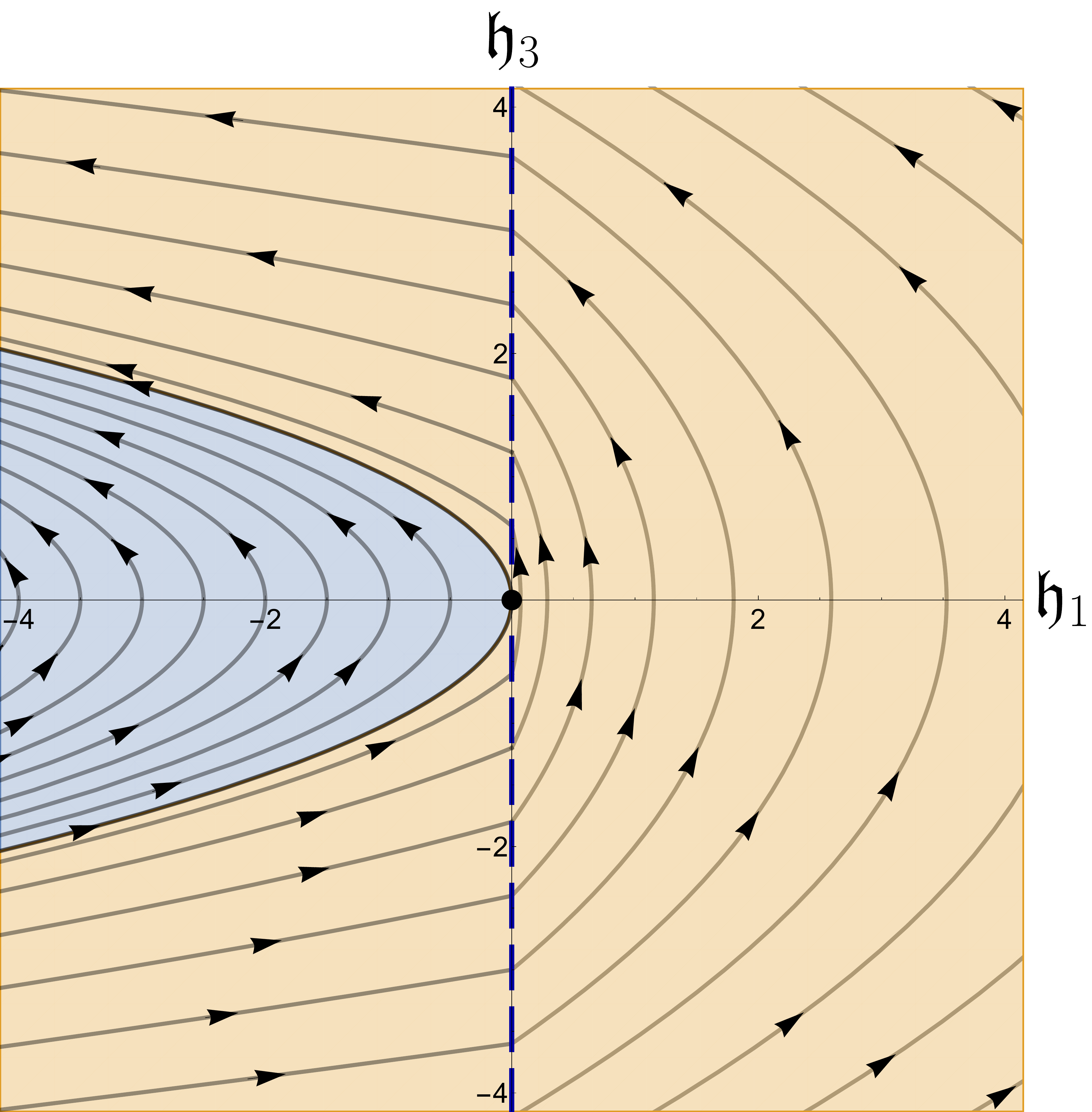}
\caption{
Phase portrait of vertical subsystem~(\ref{eq:SingVerthfu}) for $\hf_5 = 3/2$; (left) $\hf_4=-1$, (right) $\hf_4=1$.
}\label{cf1i}
\end{figure}

The phase portraits for $\hf_5 = 3/2$ with $\hf_4=-1,1$ are given in Fig.~\ref{cf1i}. The both cases $\hf_4=-1,1$ give us parabolas which open to the left. Variation of the parameter $\hf_5$ in the set $\Cf_{1\infty}^{\pm}$ does not change the direction of opening of parabolas. Notice that it is impossible to switch to control~(\ref{singabnorm}) for $\hf\in\Cf_{1\infty}^{\pm}$ since the control $u_1 = \mp \hf_5 \in (-\infty,-1)\cup(1,+\infty)$ is not admissible. Therefore, the corresponding controls are piecewise constant and have up to two switchings in the set $\{-1,1\}$.

\begin{figure}[h]
\centering
\includegraphics[width=0.35\linewidth]{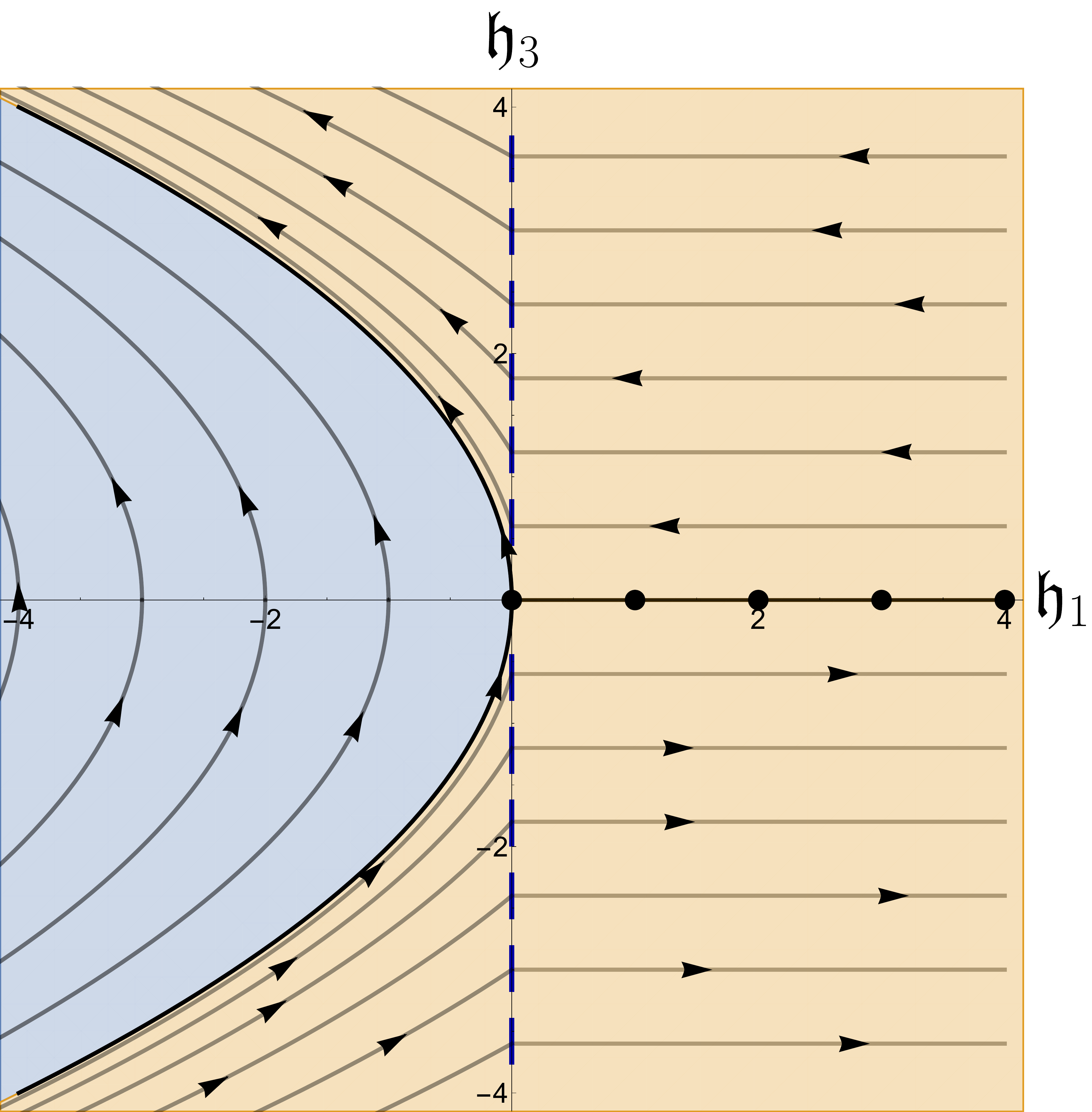}
\qquad\qquad
\includegraphics[width=0.35\linewidth]{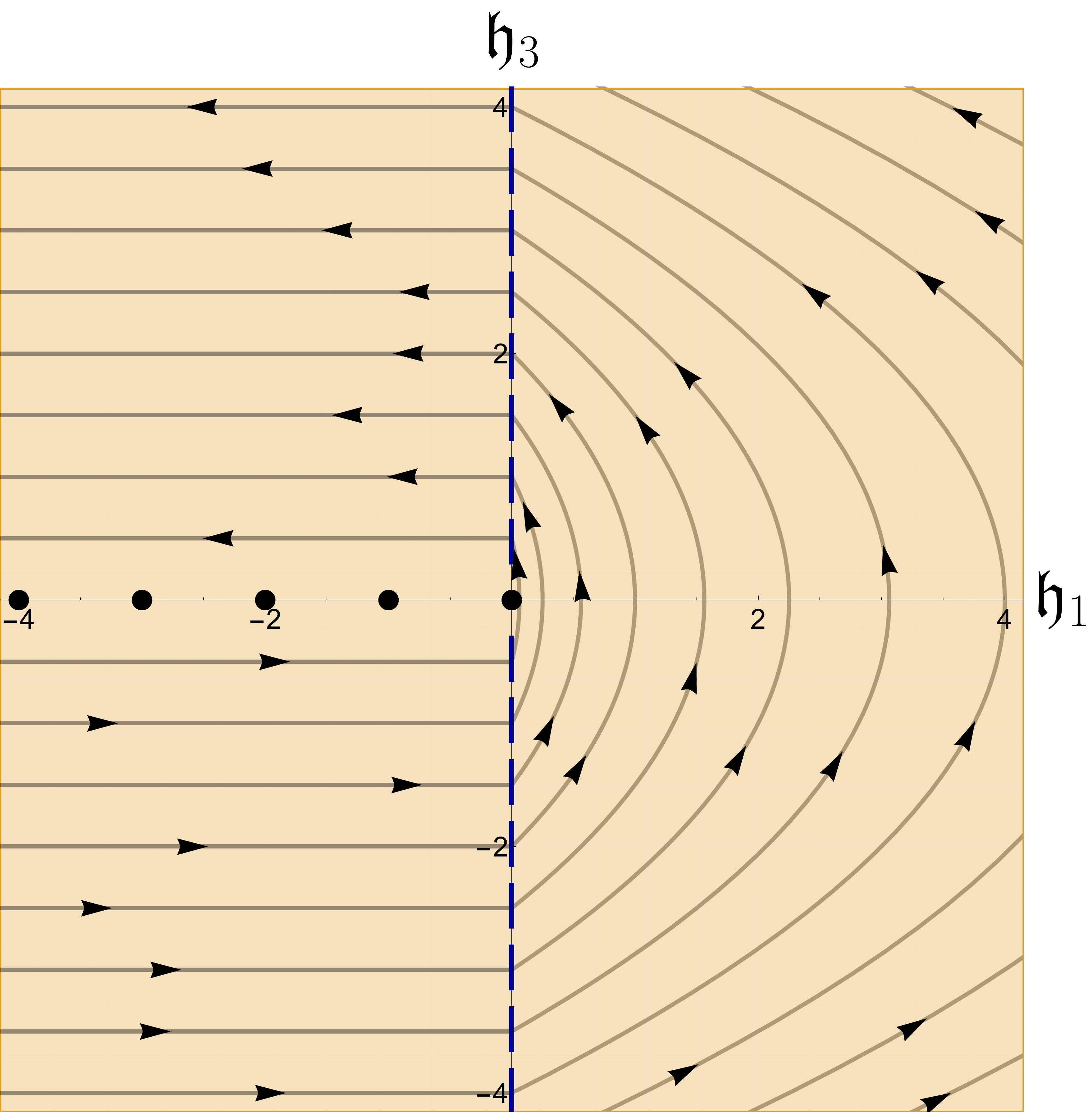}
\caption{
Phase portraits of vertical subsystem~(\ref{eq:SingVerthfu}) for $\hf_5 = 1$; (left) $\hf_4=-1$, (right) $\hf_4=1$.
}\label{cf1}
\end{figure}

Fig.~\ref{cf1} shows the phase portraits with the condition $\hf_5 = 1$. There exists a parabola for $\hf_4=-1$ (left Fig.~\ref{cf1}) which goes through the origin, where we can switch to control~(\ref{singabnorm}); this parabola corresponds to a piecewise constant control $u_1$, which has up to two switchings in the set  $\{-1,1\}$. The control for the other trajectories from the set $\Cf_1^-$ is piecewise constant as well and has not more than one switching in the set $\{-1,1\}$. The origin in subcase $\hf_4=1$ is a stable equilibrium, therefore switching to control~(\ref{singabnorm}) cannot happen along trajectories from the set $\Cf_1^+$. Here we have also piecewise constant control $u_1$ with up to two switchings in the set $\{-1,1\}$.

\begin{figure}[h]
\centering
\includegraphics[width=0.35\linewidth]{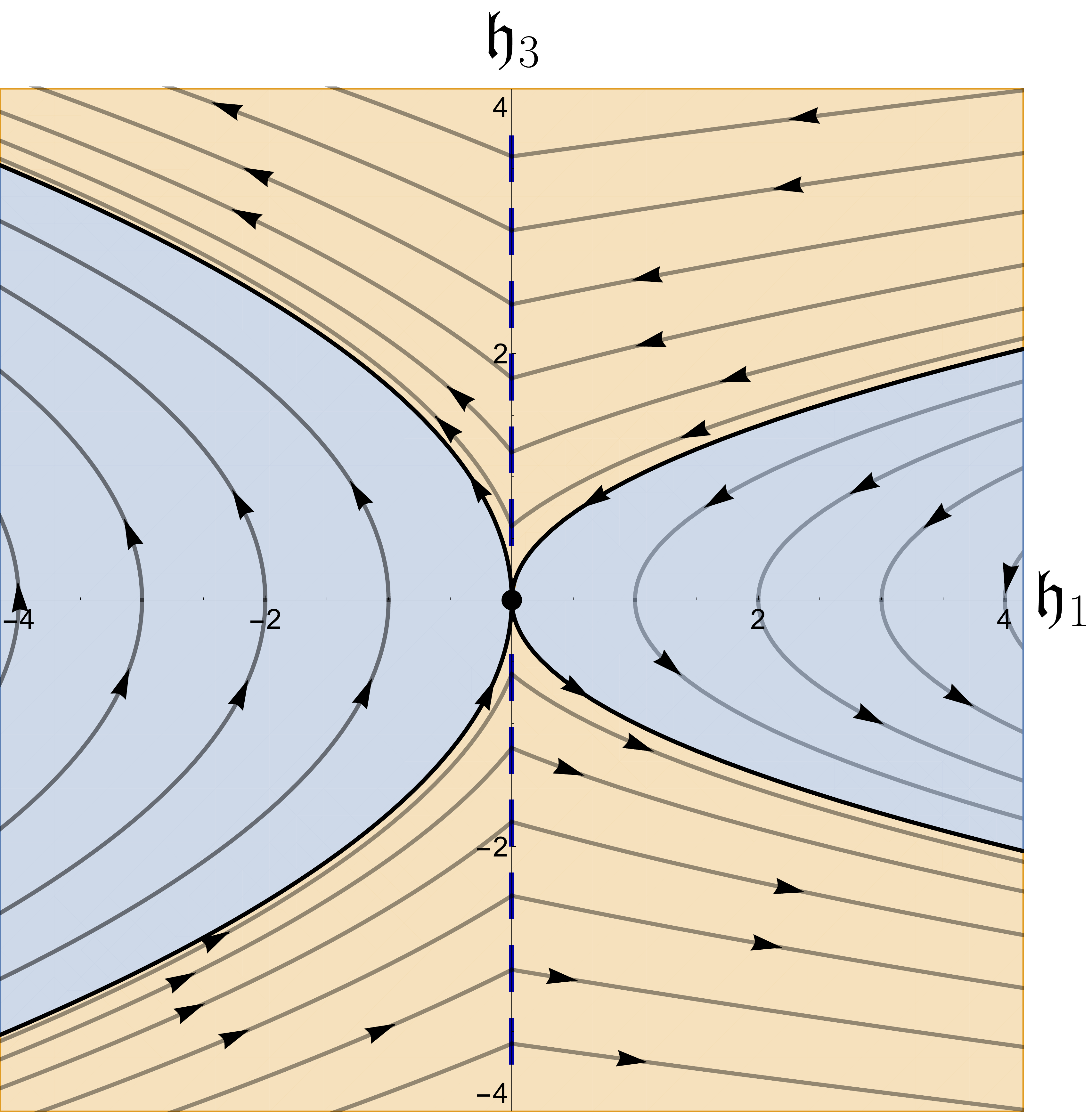}
\qquad\qquad
\includegraphics[width=0.35\linewidth]{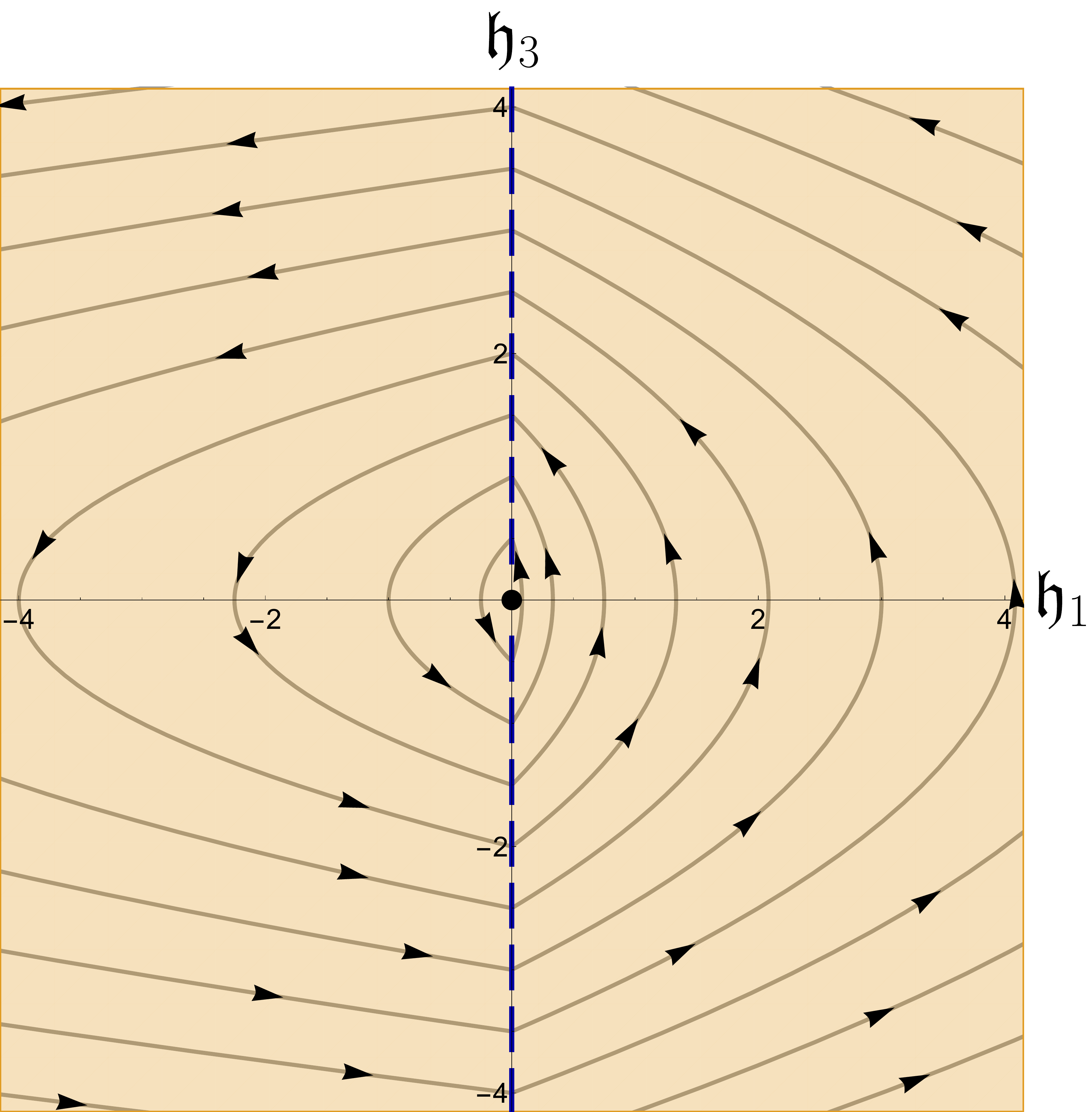}
\caption{
Phase portraits of vertical subsystem~(\ref{eq:SingVerthfu}) for $\hf_5 = 1/2$; (left) $\hf_4=-1$, (right) $\hf_4=1$.
}\label{cf01}
\end{figure}

Fig.~\ref{cf01} shows phase portraits with the condition $\hf_5 = 1/2$. The both cases $\hf_4=-1,1$ give us parabolas which are open in opposite directions in the half-planes. It is impossible to change the direction of opening of the parabolas by varying parameter $\hf_5$ in the set  $\Cf_{01}^{\pm}$. There exist two parabolas for $\hf_4=-1$ (in the left and right half-plane), which go through the origin, where we can switch to control~(\ref{singabnorm}), this case provides us piecewise constant control $u_1$ with values $\pm 1, \hf_5, \pm 1$. The control $u_1$ for the other trajectories from the set $\Cf_{01}^-$ is piecewise constant and has not more than one switching in the set $\{-1,1\}$. The origin is a stable equilibrium for $\hf_4=1$, therefore switching to control~(\ref{singabnorm}) is not possible here, the case $\Cf_{01}^+$ allows unlimited number of switchings in the set $\{-1,1\}$.  

The case $\hf \in \Cf_0^{\pm}$ is limit for $\hf \in \Cf_{01}^{\pm}$ and does not require separate consideration, it has symmetric parabolas in the left and right half-planes.

We have proved the following theorem about extremal controls.
\begin{theorem}\label{thmsing}
$h_1$-singular trajectories with $u_2 \equiv 1$, endpoints of which form a set containing the boundary of the attainable set $\mathcal{A}_{q_0}^{\mathrm{sing}} (T)$, have one of the following types of piecewise constant controls $u_1$:
\begin{enumerate}
\item  with the values $\ \pm 1, \hf_5, \pm 1 \ $ or $\  \pm 1, \hf_5, \mp 1 \ $ and no limitations on the time periods, where $\hf_5 \in [-1,1]$; 
\item  with the values $\ \pm 1, \mp 1, \pm 1, \mp 1, \dots,\ $ and the time periods $\ T_b, T_1, T_2, T_1, \dots, T_i, T_e$, \  s.t. \ $0<T_b\leq T_2$, \ $0<T_1$, \ 0 $\leq T_e\leq T_{3-i}$, where $i=1,2$. 
\end{enumerate}
\end{theorem}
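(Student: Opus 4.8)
The plan is to assemble the stratum-by-stratum phase-portrait analysis carried out above into a single classification of the admissible controls $u_1$, and then to read off the switching patterns and the time constraints directly from the geometry of the orbits of~\eqref{eq:SingVerthfu}. First I would record the two mechanisms that can produce $u_1$ along a boundary trajectory. By Theorem~\ref{th:h1-sing}, Theorem~\ref{PMPgeom} and the maximality condition~\eqref{maxGeom}, wherever $\hf_1 \ne 0$ the control is the bang value $u_1 = \sgn \hf_1 \in \{-1,+1\}$, and it switches exactly when the orbit of $(\hf_1,\hf_3)$ crosses the line $\hf_1 = 0$; wherever $\hf_1 \equiv \hf_3 \equiv 0$ on a subinterval the control takes the singular-abnormal value~\eqref{singabnorm}, namely $u_1 = -\hf_5/\hf_4 = \hf_5$ after the normalizations $\hf_4 = -1$, $\hf_5 \ge 0$, which is admissible precisely when $\hf_5 \le 1$. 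Since~\eqref{eq:SingVerthfu} is a piecewise-linear Hamiltonian system with first integral $\tfrac12\hf_3^2 + \hf_5\hf_1 + \hf_4|\hf_1|$, every orbit is a concatenation of parabolic arcs, one per half-plane, so the whole problem reduces to counting crossings of $\hf_1 = 0$ and deciding when the origin can be reached.

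Next I would split into the two scenarios matching the two items of the theorem. If the orbit reaches the origin and the singular-abnormal value is admissible — which, by the phase portraits in Figs.~\ref{cf1} and~\ref{cf01}, happens only for $\hf_4 = -1$ (for $\hf_4 = +1$ the origin is a center and is not reached by a bang arc in finite time), and then forces $\hf_5 \in [0,1]$ — the trajectory runs in along a separatrix parabola with $u_1 = \pm 1$, dwells at the origin with $u_1 = \hf_5$, and leaves along another such parabola. Choosing the two incident separatrix arcs yields exactly the Type-1 patterns $\pm 1, \hf_5, \pm 1$ and $\pm 1, \hf_5, \mp 1$, and restoring the discrete symmetry $(\hf_1,\hf_3,\hf_4,\hf_5)\mapsto(-\hf_1,-\hf_3,\hf_4,-\hf_5)$ extends the range to $\hf_5 \in [-1,1]$; concretely this collects the strata $\Cf_0^-$, $\Cf_{01}^-$, $\Cf_1^-$. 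In the complementary scenario the orbit never switches to the singular-abnormal value (either because the origin is a center, $\hf_4=+1$, or because $\hf_5 > 1$ makes $u_1 = \hf_5$ inadmissible), so $u_1$ alternates between $+1$ and $-1$ and the control is bang-bang, which is Type~2.

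For the Type-2 time constraints I would compute the transit times of the parabolic arcs of the first integral in each of the two half-planes; call these $T_1$ and $T_2$. Because consecutive interior arcs alternate between the half-planes, the interior durations form the alternating string $T_1, T_2, T_1, T_2, \dots$, each positive (in the periodic subcase $\Cf_{01}^+$ these are the two half-periods, giving arbitrarily long strings, while in the $\Cf_{1\infty}^{\pm}$ and $\Cf_1^{\pm}$ subcases the string is short, reflecting the earlier bound of at most two switchings). The first and last arcs are truncated, since neither $\lambda_0$ nor $\lambda_T$ need sit at a crossing $\hf_1=0$ — only the endpoint condition $q(T)\in\partial\mathcal{A}_{q_0}^{\mathrm{sing}}(T)$ is imposed. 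Normalizing the labels so that the first full interior arc has duration $T_1$, the initial arc is the tail of a $T_2$-arc, hence $0 < T_b \le T_2$; and if the last full interior arc has duration $T_i$, the terminal arc is the head of the next $T_{3-i}$-arc, hence $0 \le T_e \le T_{3-i}$, with $T_e = 0$ exactly when the trajectory stops at a crossing.

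Completeness then follows because, by the geometric PMP, every point of $\partial\mathcal{A}_{q_0}^{\mathrm{sing}}(T)$ is the endpoint of a geometrically optimal trajectory whose covector lies in some stratum of the decomposition $\Cf^{\pm} = \Cf_0^{\pm}\cup\Cf_{01}^{\pm}\cup\Cf_1^{\pm}\cup\Cf_{1\infty}^{\pm}$, each of which has just been sorted into Type~1 or Type~2. I expect this last paragraph to be the main obstacle: matching the truncated initial and terminal arcs to the alternating full-arc durations — and thereby justifying the precise labeling behind $0 < T_b \le T_2$ and $0 \le T_e \le T_{3-i}$ — requires an explicit transit-time computation on the parabolas, together with care in the degenerate limits $\Cf_0^{\pm}$ and $\Cf_1^{\pm}$, where an arc may be unbounded or pass through the origin and where the two control mechanisms meet.
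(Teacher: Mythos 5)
Your proposal is correct and follows essentially the same route as the paper: the theorem is stated there as the summary of exactly this stratum-by-stratum phase-portrait analysis of system~\eqref{eq:SingVerthfu} (normalization by $|h_4|$, the decomposition $\Cf^{\pm}=\Cf_0^{\pm}\cup\Cf_{01}^{\pm}\cup\Cf_1^{\pm}\cup\Cf_{1\infty}^{\pm}$, reachability of the origin and admissibility of the control~\eqref{singabnorm} only for $\hf_4=-1$, $\hf_5\le 1$, and the alternating half-plane transit times giving the constraints on $T_b$, $T_1$, $T_2$, $T_e$). The only point to fold in explicitly is the degenerate case $h_4=0$, which the paper treats separately before the normalization $\hf_i=h_i/|h_4|$ and which yields at most two switchings, subsumed in your classification.
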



Now we find an upper bound for the number of switchings of control of the second type, it allows us to reduce geometrically non-optimal controls. 

\begin{theorem}\label{thmsing2}
Let $u_1$ be a piecewise constant control with the values $\, 1, - 1, 1, - 1\, $ and the time periods $\, T_b, T_1, T_2, T_e$, s.t.~$0<T_b\leq T_2, \ 0<T_1, \ 0 \leq T_e\leq T_1$. If $T_e>\frac{T_2-T_b}{T_2+T_b} T_1$, then the control $u_1$ is not geometrically optimal.
\end{theorem}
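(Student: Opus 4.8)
The plan is to show that, under the stated inequality, the endpoint $q(T)$ does not lie on $\partial\mathcal A_{q_0}^{\mathrm{sing}}(T)$, so that the control fails the defining condition of geometric optimality. Since $u_2\equiv 1$ is fixed, by Corollary~\ref{CorGeom} the trajectory is governed by the reduced system, and the final point depends only on the scalar profile $x(\cdot)$: integrating \eqref{Ham_sys5} with $y(t)=t$ expresses $(x,z,v,w)(T)$ through the moments $x(T)$, $\int_0^T x\,dt$, $\int_0^T t\,x\,dt$ and $\int_0^T x^2\,dt$ of the piecewise-linear zigzag determined by $(T_b,T_1,T_2,T_e)$. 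First I would record these four polynomials and note that $y(T)=T$ is fixed, so that the attainable set effectively lives in the four coordinates $(x,z,v,w)$.

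Next I would produce a competitor: a second admissible control with $u_2\equiv 1$ and total time $T^\ast\le T$ reaching the same point $(x,z,v,w)(T)$. The natural candidate is a bang--singular--bang control of type $1$ in Theorem~\ref{thmsing}, whose middle arc carries the singular value $u_1^a=-\hf_5/\hf_4$ from Theorem~\ref{th:h1-sing}$(b)$; this family has four free parameters (two bang durations, the singular duration, and the singular slope), exactly enough to match the four endpoint coordinates. Writing out the four matching equations and solving for the competitor's parameters and its time $T^\ast=T^\ast(T_b,T_1,T_2,T_e)$, I expect the critical relation $T_e=\frac{T_2-T_b}{T_2+T_b}\,T_1$ to emerge as the Maxwell condition $T^\ast=T$: on it the two controls reach the point in equal time, while for larger $T_e$ one gets $T^\ast<T$. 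A strictly shorter admissible control to the same point forces $q(T)\in\operatorname{int}\mathcal A_{q_0}^{\mathrm{sing}}(T)$, which is the assertion.

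It is useful to carry the reduced vertical picture along as a guide. Imposing \eqref{maxGeom} on the four-arc pattern fixes $\hf_4=1$ and, by continuity of $\dot\hf_1=-\hf_3$ at the interior switching time, $\hf_5=\frac{T_1-T_2}{T_1+T_2}$, with the first integral $\tfrac12\hf_3^2+\hf_5\hf_1+\hf_4|\hf_1|$ constant along \eqref{eq:SingVerthfu}. A short computation shows that the hypotheses $T_b\le T_2$ and $T_e\le T_1$ are precisely the conditions that the outer (partial) arcs contain no extra zero of $\hf_1$; hence the maximum principle by itself does not exclude the whole parameter range. This confirms that the sharper threshold must come from a genuine comparison argument rather than from PMP alone. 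Equivalently, one may compute the Jacobian of the switching-time-to-endpoint map and exhibit the sign change of its determinant at the threshold, detecting the conjugate/Maxwell time past which the extremal leaves the boundary of the attainable set.

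The main obstacle is the nonlinear moment matching: the coordinate $v$ involves $\int_0^T x^2\,dt$ and $w$ a cubic term in $x(T)$, so the four matching equations are not linear, and solving them cleanly for the competitor's parameters --- while keeping the singular slope admissible, $|u_1^a|<1$, and the bang signs correct --- is delicate. The decisive step is then to control the sign of $T-T^\ast$ and to show that it is governed exactly by the quantity $T_e(T_2+T_b)-T_1(T_2-T_b)$. I would aim to factor this expression out of the relevant determinant, using the symmetry reductions already fixed above and the constancy of the first integral to simplify, rather than expanding the polynomials by brute force.
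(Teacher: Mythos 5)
Your argument has a fatal flaw at its core. The competitor you propose is required to have $u_2\equiv 1$ and to reach the \emph{same} endpoint in time $T^*\le T$, with $T^*<T$ beyond the threshold. But $\dot y=u_2$, so along the original trajectory $y(T)=T$, and any admissible trajectory ending at a point with $y$-coordinate equal to $T$ needs time at least $T$; every competitor with $u_2\equiv 1$ takes time exactly $T$. Hence $T^*=T$ identically, the dichotomy ``$T^*=T$ on the threshold, $T^*<T$ past it'' is vacuous, and your mechanism cannot detect the critical relation $T_e=\frac{T_2-T_b}{T_2+T_b}T_1$, let alone conclude that $q(T)\in\operatorname{int}\mathcal A_{q_0}^{\mathrm{sing}}(T)$. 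Even if you repair this to an \emph{equal-time} competitor --- i.e.\ a genuine Maxwell point --- the mere existence of a second trajectory through $q(T)$ does not place $q(T)$ in the relative interior of the attainable set: two distinct geometrically optimal trajectories can perfectly well meet on the boundary. Your fallback (a sign change of the Jacobian of the switching-times-to-endpoint map) detects conjugate points, which in problems with this kind of reflection symmetry occur at or after the Maxwell time, and a sign change alone does not prove that the endpoint leaves the boundary; so this route also does not yield the stated sharp bound.

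The paper closes exactly the gap you leave open, by a Maxwell-point argument combined with the classification of extremals. At $T_e^{\mathrm{crit}}=\frac{T_2-T_b}{T_2+T_b}T_1$ the four-arc trajectory has $z=0$ at its endpoint, and the reversed control with values $-1,1,-1,1$ and periods $T_e^{\mathrm{crit}},T_2,T_1,T_b$ reaches the same point in the same time. If the control with $T_e>T_e^{\mathrm{crit}}$ were geometrically optimal, one could replace its initial portion by this symmetric twin and append the remaining tail, obtaining a five-arc control with values $-1,1,-1,1,-1$ and periods $T_e^{\mathrm{crit}},T_2,T_1,T_b,T_e-T_e^{\mathrm{crit}}$ that reaches $q(T)$ at time $T$ and hence would also have to be geometrically optimal; but for $T_b\ne T_2$ its interior periods $T_2,T_1,T_b$ fail the alternating pattern required of type~2 extremals in Theorem~\ref{thmsing}, a contradiction, and the borderline case $T_b=T_2$ is then handled by a separate bootstrapping step. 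Neither the symmetric-twin concatenation nor the appeal to the extremal classification appears in your proposal, and without some substitute for them the proof does not go through. Your observation that PMP alone only yields $T_b\le T_2$, $0\le T_e\le T_1$ is correct and consistent with the paper, but it is the starting point of the argument, not a step toward the conclusion.
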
 
\begin{proof}
It is easy to check that the corresponding trajectory $q$ has vanishing coordinate $z$ when $T_e = \frac{T_2-T_b}{T_2+T_b} T_1$. Moreover, there exists a symmetric trajectory corresponding to the piecewise constant control with the values $-1, 1, -1, 1$ and the time periods $T_e, T_2, T_1, T_b$, it comes to the same point with vanishing coordinate $z$.  

We prove the theorem by contradiction.

Let $T_b<T_2$. Suppose that the control $u_1$ is geometrically optimal for $T_e>\frac{T_2-T_b}{T_2+T_b} T_1$. Then the symmetric control with the values $-1, 1, -1, 1, -1$ and the time periods $\frac{T_2-T_b}{T_2+T_b} T_1, T_2, T_1, T_b, T_e-\frac{T_2-T_b}{T_2+T_b}T_1$ is geometrically optimal as well. Since $T_b \neq T_2$, we get a contradiction --- the obtained symmetric control does not belong to any type  of extremal controls (see Theorem~\ref{thmsing}), therefore it cannot be geometrically optimal. 

Let $T_b=T_2$. Suppose there exists $T_e>0$, s.t. the control $u_1$ is geometrically optimal. Then the control with the times $\frac{T_1-T_e/2}{T_1+T_e/2}T_2,T_1,T_2,T_e$ is also geometrically optimal, since it is contained in the initial one. By the proof given above for $T_b<T_2$ we have the following condition:
\begin{align*}
T_e \leq \frac{T_2-\frac{T_1-T_e/2}{T_1+T_e/2}T_2}{T_2+\frac{T_1-T_e}{T_1+T_e}T_2} T_1 = \frac{T_1+T_e/2-(T_1-T_e/2)} {T_1+T_e/2+(T_1-T_e/2)} T_1 = \frac{T_e}{2}.
\end{align*} 
This contradiction completes the proof of the theorem.
\end{proof}

\begin{remark}
Numerical simulation shows that the time $T_e=\frac{T_2-T_b}{T_2+T_b} T_1$ is the cut time, i.e., the corresponding controls are geometrically optimal for $T_e\leq\frac{T_2-T_b}{T_2+T_b} T_1$.
\end{remark}

\begin{theorem}\label{thmsing3}
$h_1$-singular trajectories with $u_2 \equiv 1$, the endpoints of which form the boundary of the attainable set $\mathcal{A}_{q_0}^{\mathrm{sing}} (T)$, have one of the following types of piecewise constant control $u_1$:
\begin{enumerate}
\item with values $\, \pm 1, \hf_5, \pm 1\, $ or $\, \pm 1, \hf_5, \mp 1\, $ and no limits on time periods, where $\hf_5 \in [-1,1]$ (up to $2$ switchings); 
\item with values $\, \pm 1, \mp 1, \pm 1, \mp 1\, $ and time periods $\, T_b,T_1, T_2,T_e$, s.t. $0<T_b< T_2, \ 0<T_e \leq \frac{T_2-T_b}{T_2+T_b} T_1$ ($3$ switchings). 
\end{enumerate}
\end{theorem}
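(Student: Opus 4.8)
The plan is to start from Theorem~\ref{thmsing}, which already lists the admissible control types for the trajectories whose endpoints contain $\partial\mathcal{A}_{q_0}^{\mathrm{sing}}(T)$, and then to sharpen that list by means of Theorem~\ref{thmsing2}. The Type~1 controls of Theorem~\ref{thmsing} (values $\pm1,\hf_5,\pm1$ or $\pm1,\hf_5,\mp1$ with $\hf_5\in[-1,1]$) already have at most two switchings and require no change: they yield item~1 verbatim. All the work therefore concerns the Type~2 controls, the purely bang-bang ones with alternating values and internal time periods $T_1,T_2,T_1,T_2,\dots$; I must show that on the boundary these cannot have more than three switchings, and that the three-switching ones obey $0<T_b<T_2$ and $0<T_e\le\frac{T_2-T_b}{T_2+T_b}T_1$.

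The key preliminary is a \emph{prefix principle}: if a control with $u_2\equiv1$ is geometrically optimal on $[0,T]$, then its restriction to any initial interval $[0,t]$ is geometrically optimal as well. To see this, freeze the control on $[t,T]$; the corresponding flow $\Phi$ is a diffeomorphism of $M=\R^5$ which, by concatenation of controls, maps the fixed-time attainable set of the $u_2\equiv1$ family at time $t$ into the one at time $T$. Since $\Phi$ is open, an interior endpoint at time $t$ would force an interior endpoint at time $T$; contrapositively, a boundary endpoint at time $T$ forces the endpoint at time $t$ onto the boundary at time $t$. (Here interior and boundary are understood in the appropriate relative sense, namely relative to the smallest smooth submanifold carrying the $u_2\equiv1$ sheet of $\mathcal{A}_{q_0}^{\mathrm{sing}}$.)

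With the prefix principle in hand the reduction is immediate. Take any Type~2 control with at least five arcs; by the sign symmetries we may assume it starts with $+1$, so its first four arcs realise exactly the control of Theorem~\ref{thmsing2} with parameters $T_b,T_1,T_2$ and terminal period equal to $T_1$ (the fourth internal period is again $T_1$ because the periods alternate). Since $T_b>0$ we have $\frac{T_2-T_b}{T_2+T_b}<1$, hence the terminal period $T_1$ satisfies $T_1>\frac{T_2-T_b}{T_2+T_b}T_1$, and Theorem~\ref{thmsing2} declares this four-arc prefix not geometrically optimal—contradicting the prefix principle. Thus a boundary Type~2 control has at most four arcs, i.e. at most three switchings.

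It remains to treat the four-arc case $T_b,T_1,T_2,T_e$ directly, where Theorem~\ref{thmsing2} (under $0<T_b\le T_2$, $0<T_e\le T_1$) forces $T_e\le\frac{T_2-T_b}{T_2+T_b}T_1$ on the boundary. If $T_b=T_2$ the right-hand side is $0$, so $T_e=0$, the fourth arc disappears, and the control degenerates to the three-arc $\pm1,\mp1,\pm1$, that is, a Type~1 control with $\hf_5=\mp1$ already captured by item~1; hence the genuine three-switching boundary controls satisfy $0<T_b<T_2$ and $0<T_e\le\frac{T_2-T_b}{T_2+T_b}T_1$, which is item~2. The main obstacle I anticipate is the prefix principle itself—specifically, making the notions of interior and boundary precise when the $u_2\equiv1$ attainable set is not full-dimensional in $\R^5$; once that is settled, everything reduces to the already-proved Theorem~\ref{thmsing2} and to the elementary inequality $\frac{T_2-T_b}{T_2+T_b}<1$.
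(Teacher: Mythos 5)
Your proof is correct and follows the same route as the paper, whose entire proof of Theorem~\ref{thmsing3} is the citation ``Follows from Theorems~\ref{thmsing}, \ref{thmsing2}''. The details you supply --- the prefix principle (which the paper itself invokes implicitly inside the proof of Theorem~\ref{thmsing2}, ``since it is contained in the initial one''), the application of Theorem~\ref{thmsing2} to the four-arc prefix of any longer alternating control, and the degeneration of the $T_b=T_2$ and $T_e=0$ cases into item~1 --- are exactly the steps needed to turn that citation into an argument.
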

\begin{proof}
Follows from Theorems~\ref{thmsing},\ref{thmsing2}.
\end{proof}

\begin{prop}
For any $q_1 \in \mathcal{A}_{q_0}^{\mathrm{sing}} (T)$ there exists a piecewise constant singular control $\bar{u}_1$ with not more than $4$ switchings, s.t. the corresponding trajectory $\bar{q}(t) = q_{\bar{u}}(t)$, $t \in [0,T],$ comes to the point $q_1=\bar{q}(T)$. 
\end{prop}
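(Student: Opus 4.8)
The plan is to reduce everything, via the symmetry noted before Theorem~\ref{thmsing}, to the branch $u_2\equiv 1$, so that the only freedom is a scalar control $u_1(t)\in[-1,1]$ and the endpoint lies in the $4$-dimensional slice $\{y=T\}$ with coordinates $(x,z,v,w)$. First I would write the endpoint map explicitly. Integrating the free system with $u_2\equiv1$ gives $x(T)=\int_0^T u_1\,dt=:m_0$ and, after integration by parts, $z(T)=\tfrac T2\,m_0-m_1$ with $m_1=\int_0^T t\,u_1\,dt$. The key simplification is the identity $\int_0^T x^2\dot x\,dt=\tfrac13 x(T)^3$ (using $x(0)=0$), which yields $w(T)=-\tfrac16 m_0^3-\tfrac12 m_2$ with $m_2=\int_0^T t^2 u_1\,dt$. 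Thus three of the four target coordinates are governed by the three linear moments $m_0,m_1,m_2$ of $u_1$, and only $v(T)=\tfrac{T^3}{6}+\tfrac12 Q$, with $Q:=\int_0^T x(t)^2\,dt$, is genuinely nonlinear. This already explains the number $4$: three switchings suffice to adjust three linear moments, and one extra switching is needed to steer the remaining quantity $Q$.

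Next I would make this precise by a fibering argument. Fix an admissible $u_1^\ast$ reaching $q_1$, and let $(m_0,m_1,m_2,Q)$ be its four quantities. Since $(x(T),z(T),w(T))$ determine and are determined by $(m_0,m_1,m_2)$, consider the fibre of $\mathcal{A}_{q_0}^{\mathrm{sing}}(T)$ over the fixed value $(x(T),z(T),w(T))$. As the admissible controls form a convex set, $m_0,m_1,m_2$ are linear and $Q$ is convex and continuous in $u_1$, this fibre is an interval of attainable $v$-values, say $v\in[\tfrac{T^3}{6}+\tfrac12 Q_{\min},\,\tfrac{T^3}{6}+\tfrac12 Q_{\max}]$, and its two endpoints are boundary points of $\mathcal{A}_{q_0}^{\mathrm{sing}}(T)$ (moving further in the $\pm v$ direction leaves the set). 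By Theorem~\ref{thmsing3} these two endpoints are realised by geometrically optimal controls with at most $3$ switchings, and $Q\in[Q_{\min},Q_{\max}]$ since $u_1^\ast$ attains an interior value of the segment.

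It then remains to realise every $Q\in[Q_{\min},Q_{\max}]$ by a control matching $(m_0,m_1,m_2)$ with at most one additional switching. For this I would exhibit, for fixed $(m_0,m_1,m_2)$, the one-parameter family of bang-bang controls with four switching instants satisfying the three moment equations (four switching times subject to three equations leave a one-dimensional family), whose two ends degenerate, as two switching times merge (or one reaches $0$ or $T$), to the $\le 3$-switching extremizers realising $Q_{\min}$ and $Q_{\max}$. Along this family $Q$ depends continuously on the parameter, so by the intermediate value theorem every intermediate value, in particular $Q$ itself, is attained; the corresponding control lies in the prescribed moment slice, reaches $q_1$, and has at most $4$ switchings. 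Running this over the four symmetric branches and incorporating the type-$1$ controls (those carrying a singular segment $u_1\equiv\hf_5$ on a face of the family) completes the argument.

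The main obstacle is this last step: one must verify that the four-switching family with three prescribed moments is nonempty and connected, that its two degenerate ends are exactly the $\le 3$-switching extremizers produced above, and that $Q$ sweeps the whole interval $[Q_{\min},Q_{\max}]$ along it. Equivalently, one may phrase the step topologically: the endpoint map on the $4$-simplex of five time periods summing to $T$ sends the simplex boundary (where a period vanishes, i.e.\ $\le 3$ switchings) onto $\partial\mathcal{A}_{q_0}^{\mathrm{sing}}(T)$ by Theorem~\ref{thmsing3}, and a degree argument then forces the image of the whole simplex to cover the enclosed region $\mathcal{A}_{q_0}^{\mathrm{sing}}(T)$. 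The delicate point in either formulation is controlling the behaviour of the genuinely nonlinear coordinate $v$ (through $Q$) together with the singular arcs, which is precisely where the explicit phase-portrait analysis of the vertical subsystem~\eqref{eq:SingVerthfu} would be invoked.
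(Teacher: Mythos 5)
Your reduction of the endpoint map (with $u_2\equiv 1$) to the three linear moments $m_0,m_1,m_2$ of $u_1$ plus the single nonlinear quantity $Q=\int_0^T x^2\,dt$ is correct, and the fibering observation --- that over a fixed value of $(x_1,z_1,w_1)$ the attainable $v$-values form a compact interval whose two endpoints lie on the boundary of $\mathcal{A}_{q_0}^{\mathrm{sing}}(T)$ and are therefore, by Theorem~\ref{thmsing3}, reached by controls with at most $3$ switchings --- is sound. But the argument is not complete, and the gap sits exactly where you flag it: the interpolation step. You must show that for fixed $(m_0,m_1,m_2)$ the family of $\le 4$-switching controls satisfying the three moment equations is nonempty and connected, that its degenerate ends coincide with the $\le 3$-switching extremizers of $Q$, and that $Q$ sweeps all of $[Q_{\min},Q_{\max}]$ along it; none of this is established. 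The degree-theoretic rephrasing has a further defect: by Theorem~\ref{thmsing3} part of $\partial\mathcal{A}_{q_0}^{\mathrm{sing}}(T)$ is reached only by type-$1$ controls carrying a middle segment with the interior value $u_1\equiv\hf_5\in(-1,1)$; such controls do not lie on the boundary of the bang-bang simplex of five time periods, so the restriction of the endpoint map to that simplex boundary is not onto $\partial\mathcal{A}_{q_0}^{\mathrm{sing}}(T)$ and the degree argument as stated does not close.

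The paper handles the interior case by a much simpler one-dimensional continuity argument which avoids the moment analysis entirely. If $q_1$ is interior, run the constant singular control $u_1^0\equiv 0$ and follow the corresponding trajectory $q^0(s)$; by left-invariance the sets $\mathcal{A}_{q^0(s)}^{\mathrm{sing}}(T-s)$ are translates of $\mathcal{A}_{q_0}^{\mathrm{sing}}(T-s)$. At $s=0$ this set contains $q_1$ in its interior, while at $s=T$ it degenerates to the single point $q^0(T)$, so by continuity there is an $s^0$ with $q_1\in\partial\mathcal{A}_{q^0(s^0)}^{\mathrm{sing}}(T-s^0)$. Theorem~\ref{thmsing3}, applied from the base point $q^0(s^0)$, gives a control on $[s^0,T]$ with at most $3$ switchings reaching $q_1$, and prepending $u_1\equiv 0$ on $[0,s^0]$ adds exactly one switching, for a total of at most $4$. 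If you want to salvage your route, the missing connectedness and surjectivity claims would have to be extracted from the phase-portrait analysis of the vertical subsystem \eqref{eq:SingVerthfu}, which is substantially more work than the paper's two-line continuity trick.
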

\begin{proof}
We prove the theorem for case $q_1 = (x_1,T,z_1,v_1,w_1)$, other cases can be proved similarly.

If $q_1 \in \partial \mathcal{A}_{q_0}^{\mathrm{sing}} (T)$ then by Theorem~\ref{thmsing3} there exists a piecewise constant singular control $\bar{u}_1(t)$ , $t\in[0,T]$, with not more than $3$ switchings which leads to the point $q_1$.

Otherwise, we consider a constant control $u_1^0 \equiv 0$ and the corresponding singular trajectory $q^0(s) = q_{u^0}(t)$, $s \in [0,T]$. By continuity there exists a time $s^0$, s.t. $q_1 \in \partial \mathcal{A}_{q^0(s^0)}^{\mathrm{sing}} (T-s^0)$. From Theorem~\ref{thmsing3} there exists a piecewise constant singular control $\bar{u}_1(t)$, $t\in [s^0,T]$ with not more than $3$ switchings which connects the point $q^0(s^0)$ with  point $q_1$. Define $\bar{u}_1(t)=0$, $t\in [0,s^0]$, thus we construct a control $\bar{u}_1$ with not more than $4$ switchings which connects $q_0=\bar{q}(0)$ with $q_1=\bar{q}(T)$ by a singular trajectory $\bar{q}(t) = q_{\bar{u}_1}(t)$, $t\in [0,T]$.  
\end{proof}

\begin{figure}[h]
\centering
\includegraphics[width=0.45\linewidth]{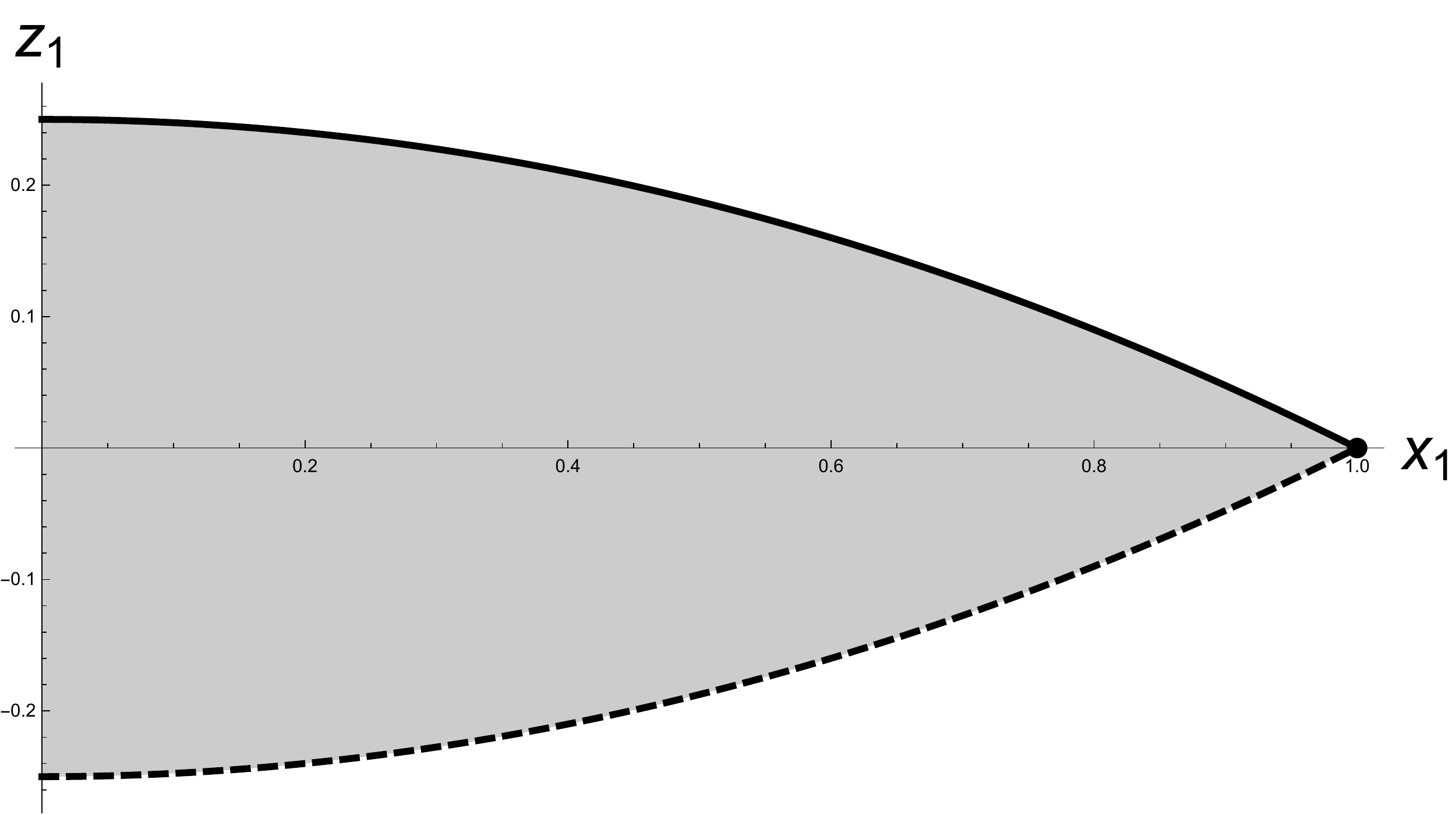} \quad \includegraphics[width=0.45\linewidth]{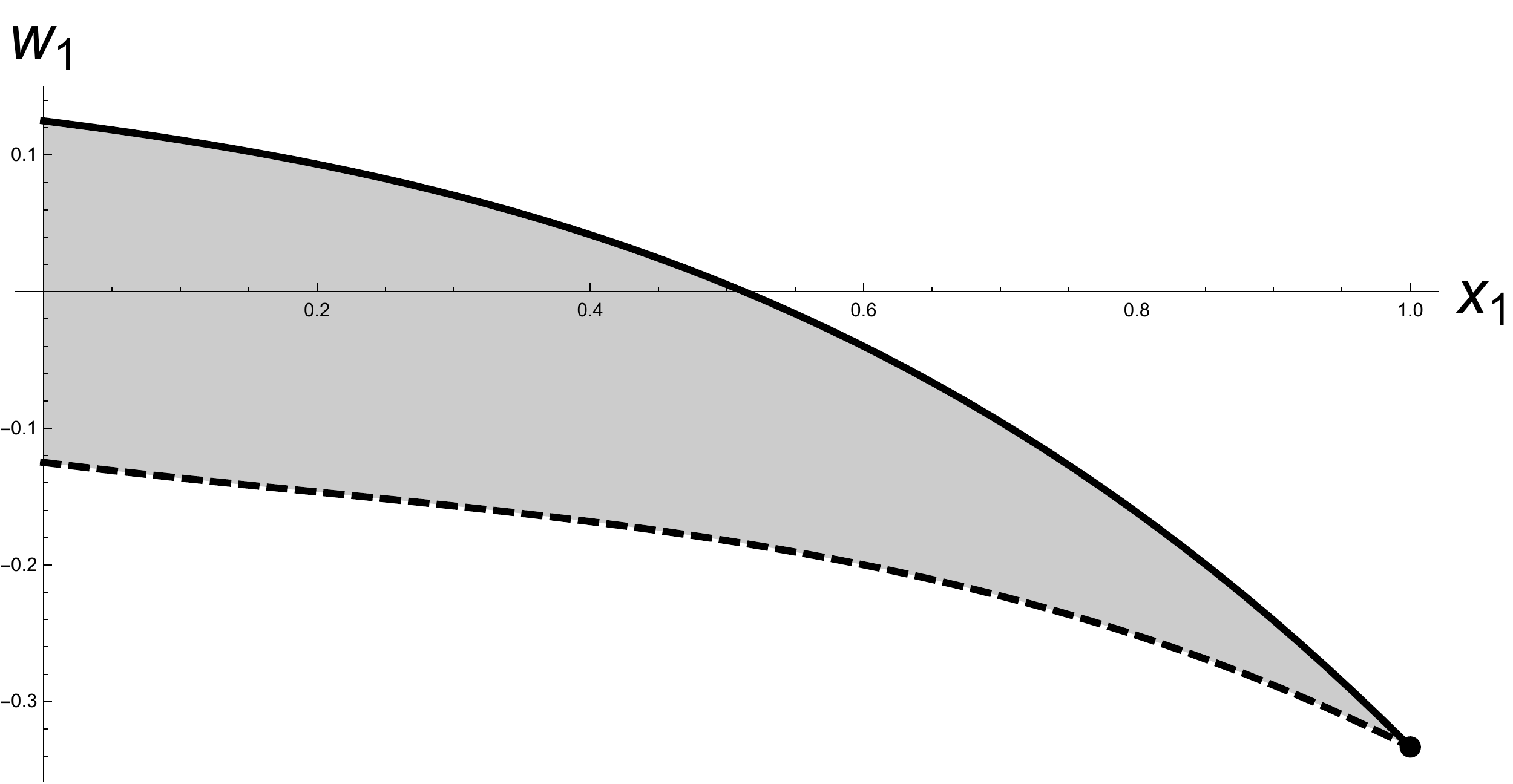}
\caption{
Projections of the attainable set to the planes $(x_1,z_1)$ and $(x_1,w_1)$. 
}\label{at-xzw}
\includegraphics[width=0.5\linewidth]{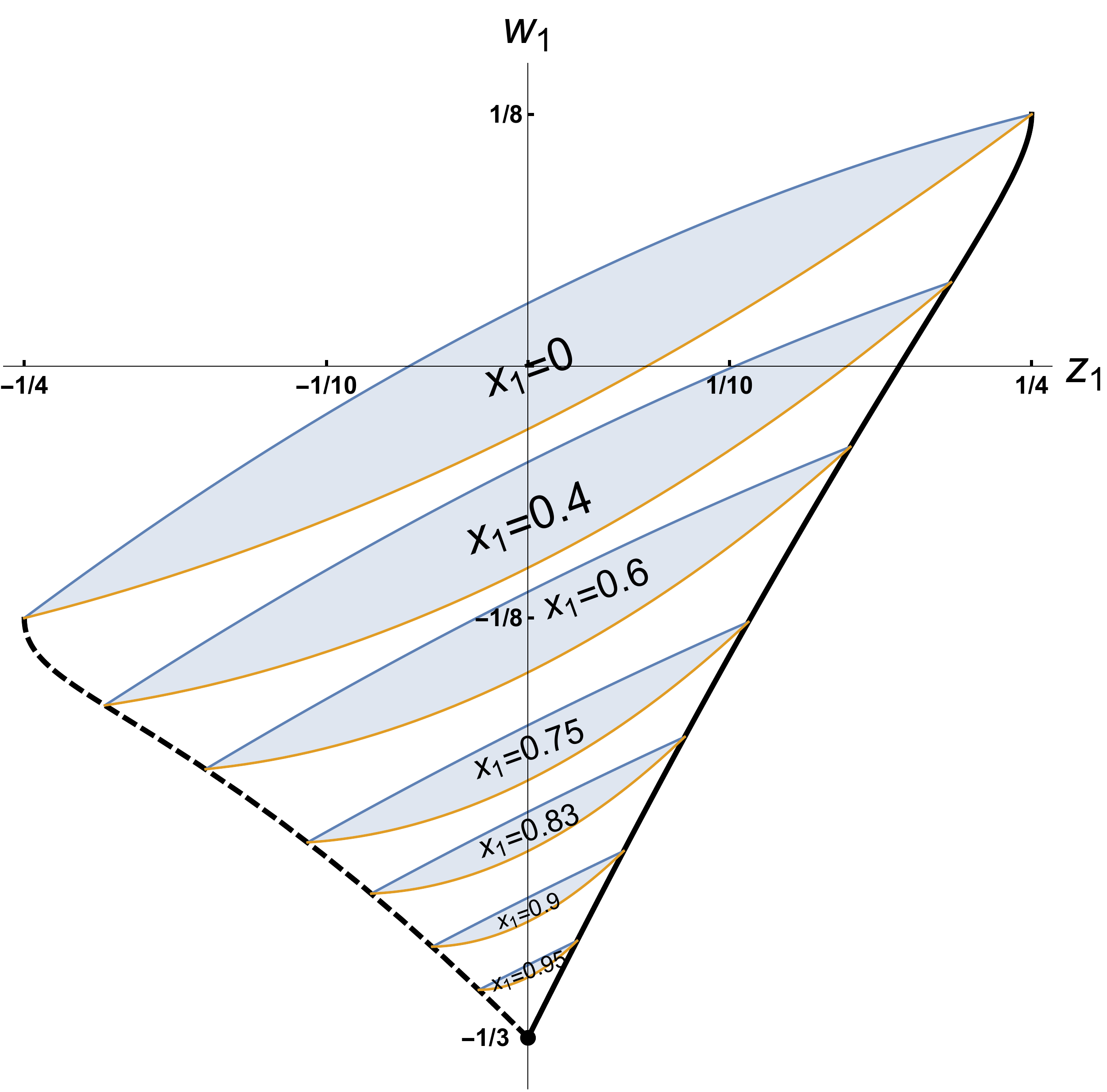} 
\caption{
Projections to the plane $(z_1,w_1)$ of sections of the attainable set with fixed values $x_1$. 
}\label{at-zw}
\end{figure}

Using controls from Theorem~\ref{thmsing3}, we construct the attainable set $\mathcal{A}_{q_0}^{\mathrm{sing}} (T)$ for a terminal time $T$ in the hyperplane $(x_1,z_1,w_1,v_1)$ with $y_1=T$. Let $T=1$, then there is an obvious restriction for the first coordinate $|x_1|\leq 1$. Using the reflection symmetry generated by changing sign of control $u_1$, it is enough to consider the case 
\begin{align}
0\leq x_1\leq 1. \label{restrx}
\end{align}
Notice that a solution for $x_1=1$ is unique and is given by the trajectory with $u_1 \equiv 1$ which comes to the point $$q_1^1 = (1,1,0,-1/3,1/3).$$ 

In order to obtain functions which give bounds for the coordinates $z_1$, $w_1$, $v_1$, we integrate system~(\ref{sys}) with controls described in Theorem~\ref{thmsing3}.     
The parabola  
\begin{align}
z_{\max} (x_1) = & \frac{1-x_1^2}{4},  \label{zm}
\end{align}
gives the following bound:
\begin{align}
|z_1|\leq z_{\max} (x_1). \label{restrz}
\end{align}

The projection of section of the attainable set $\mathcal{A}_{q_0}^{\mathrm{sing}} (1) \cap \{ y = 1\}$ to the right  half-plane $(x_1,z_1)$ is shown on left Fig.~\ref{at-xzw}. Notice that solutions for $z_1 = \pm z_{\max}(x_1)$ are unique and are given by trajectories with one switching for control $u_1$ in the set $\{-1,1\}$. The same solutions form the boundary for the projection of the attainable set on the plane $(x_1,w_1)$, see right Fig.~\ref{at-xzw}.

The following parabolas (with fixed values of $x_1$)
\begin{align}
w_{\max} (x_1, z_1) = & \frac{1}{96} \left(3 - 15 x_1 - 3 x_1^2 - 17 x_1^3\right) + \frac{z_1}{2} - \frac{z_1^2}{2 (1 + x_1)} \label{wm}
\end{align}
define the next boundary: 
\begin{align}
-w_{\max}(-x_1,-z_1) \leq w_1 \leq w_{\max}(x_1,z_1). \label{restrw}
\end{align}
Projections to the plane $(z_1,w_1)$ of sections of the attainable set with fixed values of $x_1$ are shown on Fig.~\ref{at-zw}. Solutions for $w_1 = w_{\max}(x_1,z_1)$ or $w_1 = -w_{\max}(-x_1,-z_1)$ are unique and given by trajectories with two switchings for control $u_1$ in the set $\{-1,1\}$.


Finally, we describe the last boundary for the attainable set $\mathcal{A}_{q_0}^{\mathrm{sing}} (1)$:
\begin{align}
v_{\min} (x_1, z_1, w_1)  \leq   v_1  \leq  v_{\max} (x_1, z_1, w_1) \label{restrv}
\end{align}
by the following functions:
\begin{align*}
w_{mm} (x_1, z_1) = & \frac{1}{6} \left(-x_1 (1 + x_1^2) + (3 + \sgn z_1) z_1 - \frac{4 z_1^2}{1 + x_1}\right), \\
v_{\max} (x_1, z_1, w_1) = & \frac{1}{12}\big(1 + 3 x_1^2 - 6 (1 - x_1) z_1\big)  \\
&\quad +
   \frac{\sqrt{2}}{48} \sqrt{
    9 \big(1 - x_1^2 + 4 z_1\big)^3 + 8 \big(12 w_1 + 3 x_1 + x_1^3 - 6 (1 - x_1) z_1\big)^2}, \\
v_{\min+} (x_1, z_1, w_1) = &\frac{1}{12} \left(3 + 12 w_1 + 3 x_1^2 + 2 x_1^3 - 6 (1 - x_1) z_1\right) \\
  &\quad -\frac{1}{12} (1 - x_1) \sqrt{(1 - x_1) (1 + 24 w_1 + 3 x_1 + 4 x_1^3) - 
   12 (1 - x_1) z_1 - 12 z_1^2}, \\
v_{\min-} (x_1, z_1, w_1) = &\frac{1}{12} \bigg(1 + 3 x_1^2 + 6 z_1 + 6 x_1 z_1  +   \Big(\big(1 - 12 w_1 - 2 x_1 - x_1^2 - 2 x_1^3 + 2 (1 + x_1) z_1\big)^2 \\
   &\quad+  4 \big(1 - x_1^2 - 4 z_1\big) \big((1 + x_1) (6 w_1 + x_1 + x_1^3) - 
       4 (1 + x_1) z_1 + 4 z_1^2\big)\Big)^{1/2}\bigg), \\
v_{\min} (x_1, z_1, w_1) = &\left\{ \begin{array}{ll}
         v_{\min+} (-x_1,-z_1,-w_1)  & \mbox{if $w_1 \geq w_{mm} (x_1, z_1)$};\\
		  	 v_{\min+} (x_1,z_1,w_1)  & \mbox{if $w_1 \leq -w_{mm} (-x_1, -z_1)$};\\
         v_{\min-} (x_1 \sgn z_1, |z_1|, w_1 \sgn z_1)& \mbox{if $-w_{mm} (-x_1, -z_1)\leq w_1 \leq w_{mm} (x_1, z_1)$}.\end{array} \right. 
\end{align*}
Notice that functions $w_{\max} (x_1, z_1)$ and $w_{mm} (x_1, z_1)$ are well defined near point $x_1 = -1$ since $\lim\limits_{x_1\to-1+0} \frac{z_1^2}{1+x_1} \to 0$. 


\begin{corollary}
The attainable set $\mathcal{A}_{q_0}^{\mathrm{sing}} (T) = \{ (x_1, y_1, z_1, v_1, w_1)\}$ has the following description:
\begin{align}\label{attain}
\left\{ \begin{array}{l}
|x_1 y_1| \leq T, \\
|z_1| \leq T^2 z_{\max} (x_1 y_1), 
\\
\left[ \begin{array}{l}
         \left\{ \begin{array}{l} 
			|y_1|=T, \\
  -T^3 w_{\max} (-x_1,-z_1) \leq w_1 \leq T^3 w_{\max} (x_1,z_1), \\
  T^3 v_{\min} (x_1,z_1,w_1) \leq y_1 v_1 \leq T^3 v_{\max} (x_1,z_1,w_1); 
				 \end{array} \right. \\
		  	 \left\{ \begin{array}{l} 
 |x_1|=T, \\
  -T^3 w_{\max} (y_1,-z_1) \leq v_1 \leq T^3 w_{\max} (-y_1,z_1), \\
  -T^3 v_{\max} (y_1,-z_1,-v_1) \leq x_1 w_1 \leq -T^3 v_{\min} (y_1,-z_1,-v_1).
\end{array} \right.				\end{array} \right.
				\end{array} \right.
\end{align}
\end{corollary}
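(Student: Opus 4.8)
\section*{Proof proposal}

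The plan is to assemble the statement from the boundary description of Section~6 by exploiting the two symmetry groups of Problem~\eqref{sys}--\eqref{T}: the Carnot dilations and the discrete symmetries of the control square $U$. First I would normalize the terminal time to $T=1$. The Cartan group carries the one-parameter family of dilations $\delta_\lambda\colon(x,y,z,v,w)\mapsto(\lambda x,\lambda y,\lambda^2 z,\lambda^3 v,\lambda^3 w)$, with respect to which the horizontal fields $X_1,X_2$ are homogeneous of degree $-1$. Hence $\delta_T$ sends a singular trajectory reaching a point in time $1$ to a singular trajectory of the same control type, with switching times scaled by $T$, reaching the dilated point in time $T$, so that $\mathcal{A}_{q_0}^{\mathrm{sing}}(T)=\delta_T\big(\mathcal{A}_{q_0}^{\mathrm{sing}}(1)\big)$. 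This accounts for the weights $T,T^2,T^3$ appearing in~\eqref{attain} and reduces everything to the case $T=1$.

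Next I would use the symmetries of $U$ to reduce to a single branch. The sign reflections $u_i\mapsto-u_i$ induce the coordinate reflections already used in Section~6 (for instance $u_1\mapsto-u_1$ gives $x\mapsto-x$, $z\mapsto-z$, $w\mapsto-w$), which justify the normalization $0\le x_1\le1$; and the involution exchanging $u_1\leftrightarrow u_2$ induces an automorphism interchanging the pairs $(x,y)$ and, up to sign, $(v,w)$ while reversing $z$, which interchanges the family $u_2\equiv1$ with the family $u_1\equiv1$. Consequently $\mathcal{A}_{q_0}^{\mathrm{sing}}(1)$ is the union of the four saturated families, and it suffices to describe the single normalized piece $u_2\equiv1$, $y_1=1$, $0\le x_1\le1$; the two bracketed alternatives in~\eqref{attain}, distinguished by $|y_1|=T$ versus $|x_1|=T$, are the images of this piece under the above symmetries.

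The heart of the argument is to show that the explicit inequalities cut out precisely this normalized piece, and I would organize it along the weight filtration, one coordinate layer at a time. \emph{Necessity} of the bounds is essentially the content of Section~6: by Theorem~\ref{thmsing3} the boundary of the attainable set is swept out by the listed bang-bang controls with up to three switchings, together with the singular-abnormal control~\eqref{singabnorm}, and integrating system~\eqref{sys} along these controls produces exactly $z_{\max}$, then $w_{\max}$, then $v_{\max}$ and $v_{\min}$ as the extreme values of $z_1$, of $w_1$ given $(x_1,z_1)$, and of $v_1$ given $(x_1,z_1,w_1)$. For \emph{sufficiency} I would argue fiberwise: after fixing the lower-weight coordinates, the set of attainable values of the next coordinate is an interval, because $\mathcal{A}_{q_0}^{\mathrm{sing}}(T)$ is compact and connected (Filippov's theorem together with continuity of the endpoint map in the controls) and its two relative boundary points in each fiber are realized by the extremal controls above; the Proposition then certifies that every interior point is genuinely reached by a singular control with at most four switchings. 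Iterating over $x_1$, then $z_1$, then $w_1$, then $v_1$ fills the region and yields~\eqref{attain}.

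The step I expect to be the main obstacle is the exact determination of the bounding functions: verifying that the controls singled out in Theorem~\ref{thmsing3} really maximize and minimize each successive coordinate, rather than merely being extremal candidates, and that no admissible choice of switching times does better. This is the origin of the algebraic formulas for $w_{\max}$, $v_{\max}$, $v_{\min}$ and of the three-way case split in the definition of $v_{\min}$. Carrying out and comparing the endpoint integrals over the finite-dimensional families of switching times, and handling the degenerate limits where a bang-bang arc collapses onto the singular-abnormal arc~\eqref{singabnorm}, is the computational core. The remaining issue is purely topological, namely gluing the pieces coming from the four saturated families into a single closed region with no gaps, and it is resolved by the compactness and connectedness of $\mathcal{A}_{q_0}^{\mathrm{sing}}(T)$.
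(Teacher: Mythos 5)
Your proposal is correct and follows essentially the same route as the paper: the paper's proof likewise reduces to the normalized case $y_1=T$, $x_1\ge 0$ via the dilation $(x,y,z,v,w,t)\mapsto(Tx,Ty,T^2z,T^3v,T^3w,Tt)$ and the three discrete symmetries, and then invokes the bounds \eqref{restrx}, \eqref{restrz}, \eqref{restrw}, \eqref{restrv} already established by integrating the system along the controls of Theorem~\ref{thmsing3}. Your additional fiberwise connectedness argument for sufficiency makes explicit a point the paper leaves implicit, but does not change the structure of the proof.
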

\begin{proof}
Follows from the obtained bounds~(\ref{restrx}),~(\ref{restrz}),~(\ref{restrw}),~(\ref{restrv}) for the case $y=1, x\geq 0$, the dilation symmetry 
\be{dilations}
(x,y,z,v,w,t) \mapsto (T x, T y, T^2 z, T^3 v, T^3 w, T t)
\ee
 and the following discrete symmetries:
\begin{align}
(x,y,z,v,w) \mapsto (-x,y,-z,v,-w), \\
(x,y,z,v,w) \mapsto (x,-y,-z,-v,w), \\
(x,y,z,v,w) \mapsto (y,x,-z,-w,-v). 
\end{align}
\end{proof}

\begin{corollary}
The attainable set $\mathcal{A}_{q_0}^{\mathrm{sing}} (T)$ is semi-algebraic. 
\end{corollary}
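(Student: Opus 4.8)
The plan is to show that the explicit description \eqref{attain} of $\mathcal{A}_{q_0}^{\mathrm{sing}}(T)$ furnished by the preceding corollary is a finite Boolean combination of inequalities among the coordinates and a finite family of \emph{semi-algebraic functions}, and then to invoke the closure properties of the class of semi-algebraic sets. Recall that semi-algebraic subsets of $\R^n$ are closed under finite unions, finite intersections, and complements, so it suffices to verify that each elementary condition appearing in \eqref{attain} carves out a semi-algebraic set; the outer bracket and the case distinctions then assemble the whole set by finitely many such operations.

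First I would dispose of the polynomial and rational bounds. The conditions $|x_1 y_1|\le T$ and $|z_1|\le T^2 z_{\max}(x_1 y_1)$ are, after expanding the absolute values, pairs of polynomial inequalities, since $z_{\max}$ is a polynomial by \eqref{zm}; these are manifestly semi-algebraic. The bounds involving $w_{\max}$ and $w_{mm}$ contain the single rational term $\tfrac{z_1^2}{1+x_1}$; an inequality of the form $w_1 \le T^3 w_{\max}(x_1,z_1)$ is converted to a polynomial inequality by clearing the denominator $1+x_1$ and splitting on its sign, which produces a finite union of sets defined by polynomial inequalities (the set $x_1=-1$ being handled separately, where the limit is finite as already noted). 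Each such piece is semi-algebraic.

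Next I would treat the bounds on $v_1$, which constitute the genuinely nontrivial part. Each of $v_{\max}$, $v_{\min+}$, $v_{\min-}$ has the shape $R_0 \pm R_1\sqrt{R_2}$ with $R_0,R_1,R_2$ rational functions of $(x_1,z_1,w_1)$ and $R_2\ge 0$ on the relevant domain. The graph of $\sqrt{R_2}$ is $\{(p,s): s\ge 0,\ s^2 = R_2(p)\}$, a semi-algebraic set, and sums, products, and quotients of semi-algebraic functions are again semi-algebraic; hence $v_{\max},v_{\min\pm}$ are semi-algebraic functions, and an inequality such as $y_1 v_1 \le T^3 v_{\max}(\dots)$ defines a semi-algebraic set. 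Concretely, one isolates the radical and squares after case-splitting on the signs of the two sides, turning each radical inequality into a finite disjunction of polynomial inequalities. Finally, the piecewise definition of $v_{\min}$ over the three regions delimited by $w_{mm}$ is itself a finite union of semi-algebraic pieces.

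The main obstacle is bookkeeping rather than conceptual: correctly clearing the radicals in $v_{\max},v_{\min\pm}$ and the denominator $1+x_1$ in $w_{\max},w_{mm}$ while tracking the sign conditions needed to justify squaring and multiplying through, so that no spurious points are introduced. Once every atomic inequality in \eqref{attain} has been rewritten as a finite Boolean combination of polynomial inequalities, the closure of semi-algebraic sets under finite unions and intersections immediately yields that $\mathcal{A}_{q_0}^{\mathrm{sing}}(T)$ is semi-algebraic.
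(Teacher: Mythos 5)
Your proposal is correct and follows essentially the same route as the paper: reduce the explicit description of the attainable set to polynomial inequalities by eliminating absolute values, the denominator $1+x_1$, and the square roots via sign case-splitting and squaring (the paper states exactly these radical-elimination equivalences), then invoke closure of semi-algebraic sets under finite Boolean operations. The only cosmetic difference is that the paper first treats the section $y_1=1$ of $\mathcal{A}_{q_0}^{\mathrm{sing}}(1)$ and then extends to the full set by the discrete symmetries and dilations, whereas you apply the same argument directly to the global description \eqref{attain}, which was itself assembled from that section by those symmetries.
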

\begin{proof}
Consider the section $y_1=1$ of the set $\mathcal{A}_{q_0}^{\mathrm{sing}} (1)$. As it was shown above it is described by inequalities~(\ref{restrx}), (\ref{restrz}), (\ref{restrw}), (\ref{restrv}). 

It is easy to see that inequalities~(\ref{restrx}),(\ref{restrz}),(\ref{restrw}) are equivalent to polynomial ones. Moreover, it is possible to get rid of the square root function in~(\ref{restrv}) by applying the following equivalences:
\begin{align*}
\sqrt{f(q)}\leq g(q) \Longleftrightarrow \left\{ \begin{array}{ll}
         f(q) \geq 0, &\\
		  	 g(q) \geq 0, & \\
         f(q) \leq g^2(q). &\end{array} \right. \qquad \qquad 
\sqrt{f(q)}\geq g(q) \Longleftrightarrow \left[ \begin{array}{ll}
         \left\{ \begin{array}{l} g(q) < 0, \\
						f(q) \geq 0;
				 \end{array} \right. \\
		  	 \left\{ \begin{array}{l} g(q) \geq 0,\\
         f(q) \geq g^2(q).
				 \end{array} \right.				\end{array} \right. 
\end{align*}
Therefore the section $y=1$ of the set $\mathcal{A}_{q_0}^{\mathrm{sing}} (1)$ is semi-algebraic, which proves via action of the symmetries that the whole attainable set  $\mathcal{A}_{q_0}^{\mathrm{sing}} (1)$ is semi-algebraic as well. Via dilations~\eq{dilations}, any attainable set $\mathcal{A}_{q_0}^{\mathrm{sing}} (T)$, $T > 0$, is semi-algebraic as well.
\end{proof}

\begin{remark}
Since singular trajectories are optimal, the attainable set $\mathcal{A}_{q_0}^{\mathrm{sing}} (T)$ is exactly the part of the radius $T$ sub-Finsler sphere filled by singular trajectories.
\end{remark}

\section{Bang-bang flow}
In this section we consider extremals that satisfy the condition $\card \{ t \in [0, T] \mid h_1 h_2 (\lambda_t) \ne 0 \} < \infty$. This analysis obviously applies to bang-bang arcs $\lambda_t, t \in (\alpha, \beta) \subset [0, T]$.

If $h_1 h_2 (\lambda_t) |_{(\alpha, \beta)} \ne 0$, then $u(t)|_{(\alpha, \beta)} = (s_1, s_2)$, $s_i = \sgn h_i(\lambda_t)|_{(\alpha, \beta)}$.
Thus bang-bang extremals satisfy, at the points where $h_1 h_2 (\lambda_t) \ne 0$, the following bang-bang Hamiltonian system with the Hamiltonian function $H = |h_1| + |h_2|$:
\begin{equation}\label{Ham_bang}
\begin{cases} 
\dot{h}_1 = -s_2 h_3, \\
\dot{h}_2 = s_1 h_3, \\
\dot{h}_3 = s_1 h_4 + s_2 h_5, \\
\dot{h}_4 = \dot{h}_5 = 0, \\
\dot{q} = s_1 X_1 + s_2 X_2.
\end{cases}
\end{equation}
Solutions to this system are piecewise polynomial:
\begin{itemize}
\item $h_3, x, y$ are piecewise linear,
\item $h_1, h_2, z$ are piecewise quadratic,
\item $v, w$ are piecewise cubic.
\end{itemize}
\begin{remark}
The mapping $(\lambda, q) \mapsto (k\lambda, q), k> 0$, preserves extremal trajectories, thus in the sequel we consider only the reduced case 
\begin{gather*}
H(\lambda) \equiv 1.
\end{gather*}
\end{remark}

Let us parameterize the square
\be{H=1} \{ (h_1, h_2) \in \mathbb{R}^2  \mid  |h_1| + |h_2| = 1 \}  
\ee
by an angular coordinate $\theta \in S^1 = \mathbb{R}/2\pi \mathbb{Z}$:
$$h_1 = \sgn (\cos \theta) \cos^2 \theta, \quad h_2 = \sgn (\sin \theta) \sin^2 \theta.$$

With the use of the coordinate $\theta$, the vertical part of Hamiltonian system \eq{Ham_bang} reduces to the following system:
\begin{equation}\label{Hamtheta}
\begin{cases} 
\dot{\theta} = \frac{h_3}{|\sin 2\theta|}, \quad \theta \ne \frac{\pi n}{2}, \\
\dot{h}_3 = s_1 h_4 +s_2 h_5, \quad s_1 = \sgn \cos \theta, \quad s_2  = \sgn \sin \theta.
\end{cases}
\end{equation}
We have proved the following statement.

\begin{prop}\label{prop:Hamtheta}
Bang-bang extremal arcs satisfy ODE \eq{Hamtheta}.
\end{prop}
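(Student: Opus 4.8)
The plan is to obtain the two lines of \eq{Hamtheta} directly from the bang-bang Hamiltonian system \eq{Ham_bang} together with the angular parametrization $h_1 = \sgn(\cos\theta)\cos^2\theta$, $h_2 = \sgn(\sin\theta)\sin^2\theta$ of the square $|h_1|+|h_2|=1$. The second equation requires no work at all: it is literally the third line $\dot h_3 = s_1 h_4 + s_2 h_5$ of \eq{Ham_bang}, once one notes that through the parametrization the signs $s_1 = \sgn(\cos\theta)$ and $s_2 = \sgn(\sin\theta)$ coincide with $\sgn h_1$ and $\sgn h_2$ (because $\cos^2\theta, \sin^2\theta > 0$ away from the corners), which is exactly the bang-bang sign convention used in passing from the control system to \eq{Ham_bang}. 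So the entire content is the equation for $\dot\theta$.

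First I would fix a bang-bang arc and restrict attention to an open subinterval between two consecutive switching points. On such an interval $h_1 h_2(\lambda_t) \ne 0$, hence $\theta(t) \notin \tfrac{\pi}{2}\mathbb{Z}$, the map $(h_1,h_2)\mapsto\theta$ is smooth there, and both $s_1$ and $s_2$ are constant. In particular $\theta(t)$ is differentiable, which is what licenses the chain-rule computation to follow. Next I would differentiate $h_1 = \sgn(\cos\theta)\cos^2\theta$ in $t$; since $\sgn(\cos\theta) = s_1$ is locally constant, this gives $\dot h_1 = -s_1\sin(2\theta)\,\dot\theta$. Equating with $\dot h_1 = -s_2 h_3$ from \eq{Ham_bang} yields $s_1\sin(2\theta)\,\dot\theta = s_2 h_3$.

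The last step is to solve for $\dot\theta$ and clean up the signs. Using $\sgn(\sin 2\theta) = \sgn(\sin\theta)\sgn(\cos\theta) = s_1 s_2$, so that $\sin(2\theta) = s_1 s_2\,|\sin 2\theta|$, together with $s_1^2 = s_2^2 = 1$, one finds $\dot\theta = \dfrac{s_2 h_3}{s_1\sin 2\theta} = \dfrac{h_3}{|\sin 2\theta|}$, as claimed. As a consistency check I would repeat the computation starting from $h_2$, where $\dot h_2 = s_2\sin(2\theta)\,\dot\theta$ is matched against $\dot h_2 = s_1 h_3$; this produces the same formula.

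There is no genuine analytic obstacle here — the computation is a one-line differentiation — so the only point demanding care is the sign bookkeeping together with the justification that $\theta(t)$ is differentiable. The latter rests entirely on remaining inside an interval where $h_1 h_2 \ne 0$: there $\theta$ avoids the corner values $\tfrac{\pi n}{2}$ at which the parametrization is not smooth, and $s_1, s_2$ stay constant, which is precisely what makes the chain rule valid. Since those corner values are exactly the switching points, and they are already excluded in the statement of \eq{Hamtheta} via the condition $\theta \ne \tfrac{\pi n}{2}$, the reduced ODE is asserted only on the intervals where it is meaningful.
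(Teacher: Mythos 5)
Your proposal is correct and follows essentially the same route as the paper, which obtains \eq{Hamtheta} by substituting the parametrization $h_1=\sgn(\cos\theta)\cos^2\theta$, $h_2=\sgn(\sin\theta)\sin^2\theta$ into the vertical part of \eq{Ham_bang} away from the corner values $\theta=\tfrac{\pi n}{2}$. You merely make explicit the chain-rule differentiation and the sign bookkeeping $\sin 2\theta=s_1s_2\,|\sin2\theta|$ that the paper leaves implicit, and the cross-check via $h_2$ is a sensible addition.
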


Along bang-bang arcs the function $\theta (t)$ is continuous, and smooth for $\theta \ne \frac{\pi n}{2}$.

\subsection{Discrete symmetries of the Hamiltonian system} \label{subsec:symmetr}
Consider the following mappings of the dual of the Lie algebra $L^* \cong \mathbb{R}_{h_1 \dots h_5}^5$:
\begin{align*}
&\varepsilon^1 : (h_1, h_2, h_3, h_4, h_5) \mapsto (h_2, h_1, -h_3, -h_5, -h_4),\\
&\varepsilon^2 : (h_1, h_2, h_3, h_4, h_5) \mapsto (-h_2, -h_1, -h_3, h_5, h_4),\\
&\varepsilon^3 : (h_1, h_2, h_3, h_4, h_5) \mapsto (h_1, -h_2, -h_3, -h_4, h_5),\\
&\varepsilon^4 : (h_1, h_2, h_3, h_4, h_5) \mapsto (-h_1, h_2, -h_3, h_4, -h_5),\\
&\varepsilon^5 : (h_1, h_2, h_3, h_4, h_5) \mapsto (-h_2, h_1, h_3, -h_5, h_4),\\
&\varepsilon^6 : (h_1, h_2, h_3, h_4, h_5) \mapsto (h_2, -h_1, h_3, h_5, -h_4),\\
&\varepsilon^7 : (h_1, h_2, h_3, h_4, h_5) \mapsto (-h_1, -h_2, h_3, -h_4, -h_5).\\
\end{align*}
The following statement is verified immediately.

\begin{prop}\label{propos:symmetr}
The mappings $\varepsilon^i$, $i = 1, \dots, 7$, generate the group $G = \{ \Id, \varepsilon^1, \dots, \varepsilon^7 \}$ with the product Table \ref{tab:symmetr}. These mappings are symmetries of system \eq{Hamtheta}, i.e., transform its solutions into solutions.
\end{prop}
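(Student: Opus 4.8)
The plan is to treat the two assertions separately: first that $G=\{\Id,\varepsilon^1,\dots,\varepsilon^7\}$ is a group realizing Table~\ref{tab:symmetr}, and second that each $\varepsilon^i$ carries solutions of \eqref{Hamtheta} to solutions. For the group structure, I would begin by noting that each $\varepsilon^i$ is linear, given by a signed permutation matrix on $\mathbb{R}^5_{h_1\dots h_5}$, and that its restriction to the $(h_1,h_2)$-plane realizes one of the seven nontrivial isometries of the square $\{|h_1|+|h_2|=1\}$: the maps $\varepsilon^1,\dots,\varepsilon^4$ are the four reflections (across the two diagonals and the two axes) and $\varepsilon^5,\varepsilon^6,\varepsilon^7$ are the rotations by $\pm\pi/2$ and $\pi$. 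Together with $\Id$ these are exactly the eight elements of the dihedral group $D_4$. The projection $G\to D_4$ sending $\varepsilon^i$ to its $(h_1,h_2)$-action is a homomorphism that is injective on the eight listed maps, so it remains only to verify closure, i.e. that composing any two of the eight maps again lands in the set and reproduces the table (for instance $\varepsilon^5\circ\varepsilon^5=\varepsilon^7$, two quarter-turns giving a half-turn). This is a direct multiplication of signed permutation matrices; carrying it out on a generating set and using associativity fills Table~\ref{tab:symmetr} and identifies $G\cong D_4$.

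For the symmetry claim I would exploit that \eqref{Hamtheta} is the vertical part of \eqref{Ham_bang} rewritten on the reduced surface $\{H=1\}$ through the angular parameterization $h_1=\sgn(\cos\theta)\cos^2\theta$, $h_2=\sgn(\sin\theta)\sin^2\theta$. Since every $\varepsilon^i$ preserves $H=|h_1|+|h_2|$, it descends to a transformation of $(\theta,h_3,h_4,h_5)$. The induced action on $\theta$ is $\theta\mapsto\varepsilon\theta+c$ with $\varepsilon=+1$ for the rotations $\varepsilon^5,\varepsilon^6,\varepsilon^7$ and $\varepsilon=-1$ for the reflections $\varepsilon^1,\dots,\varepsilon^4$, and with $c\in\frac{\pi}{2}\mathbb{Z}$; correspondingly $h_3\mapsto\varepsilon h_3$ (reflections flip $h_3$, rotations fix it), which matches the explicit coordinate formulas. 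I would first record the elementary lemma that $|\sin2\theta|$ is invariant under all such maps, since $\sin2(\varepsilon\theta+c)=\pm\sin2\theta$.

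With this dictionary, verifying that each $\varepsilon^i$ preserves \eqref{Hamtheta} reduces to two identities. For $\dot\theta=h_3/|\sin2\theta|$: differentiating $\tilde\theta=\varepsilon\theta+c$ gives $\varepsilon\dot\theta$, while the transformed right-hand side is $\tilde h_3/|\sin2\tilde\theta|=\varepsilon h_3/|\sin2\theta|$, so the two agree. For $\dot h_3=s_1h_4+s_2h_5$: the transformation relabels the signs $(s_1,s_2)=(\sgn\cos\theta,\sgn\sin\theta)$ according to how $(\cos\theta,\sin\theta)$ transform, and simultaneously applies the signed permutation of $(h_4,h_5)$ prescribed by $\varepsilon^i$; one checks that the net effect is $\tilde s_1\tilde h_4+\tilde s_2\tilde h_5=\varepsilon(s_1h_4+s_2h_5)=\varepsilon\dot h_3=\dot{\tilde h}_3$. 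Because the symmetries of a fixed ODE form a group, it suffices to verify these two identities on a generating pair of $D_4$, e.g. the reflection $\varepsilon^3$ (giving $\theta\mapsto-\theta$, $(h_4,h_5)\mapsto(-h_4,h_5)$) and the quarter-turn $\varepsilon^5$ (giving $\theta\mapsto\theta+\pi/2$, $(h_4,h_5)\mapsto(-h_5,h_4)$); the first paragraph then propagates the conclusion to all of $G$.

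There is no genuine difficulty here — the statement is a finite verification, as the phrase ``verified immediately'' indicates — but the one place demanding care is the bookkeeping in the $\dot h_3$ equation, where the relabeling of $(s_1,s_2)$ coming from the transformation of $(\cos\theta,\sin\theta)$ must be combined correctly with the signed permutation of $(h_4,h_5)$ so that the common factor $\varepsilon$ emerges. Finally, the measure-zero set $\theta\in\frac{\pi}{2}\mathbb{Z}$, where $\sin2\theta=0$ and the $\dot\theta$-equation is not defined, is mapped to itself by each $\varepsilon^i$ and is handled by the continuity of $\theta(t)$ recorded after Proposition~\ref{prop:Hamtheta}.
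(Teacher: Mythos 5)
Your proposal is correct and is, in substance, the same finite verification that the paper leaves implicit (the paper offers no argument beyond ``verified immediately''): a direct check of the composition table for the signed permutation matrices, plus the observation that each $\varepsilon^i$ induces $\theta\mapsto\pm\theta+c$ with $c\in\frac{\pi}{2}\mathbb{Z}$ and $h_3\mapsto\pm h_3$, under which both equations of \eqref{Hamtheta} transform consistently. Your organizational refinements --- reducing the symmetry check to the generating pair $\{\varepsilon^3,\varepsilon^5\}$ of $D_4$ and noting the invariance of $|\sin 2\theta|$ --- are sound and make the computation cleaner, but they do not constitute a genuinely different route.
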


\begin{table}[h]
\caption{Product rule in group $G$: line $i$, column $j$ contains $\varepsilon^i \circ \varepsilon^j$}\label{tab:symmetr}
\begin{center}
\begin{tabular}{c|c|c|c|c|c|c|c|}
 & $\varepsilon^1$ & $\varepsilon^2$ & $\varepsilon^3$ & $\varepsilon^4$ & $\varepsilon^5$ & $\varepsilon^6$ & $\varepsilon^7$ \\
\hline
$\varepsilon^1$ & $\Id$ & $\varepsilon^7$ & $\varepsilon^6$& $\varepsilon^5$ & $\varepsilon^4$ & $\varepsilon^3$ & $\varepsilon^2$ \\
\hline
$\varepsilon^2$ & $\varepsilon^7$ & $\Id$ & $\varepsilon^5$ & $\varepsilon^6$ & $\varepsilon^3$ & $\varepsilon^4$ & $\varepsilon^1$ \\
\hline
$\varepsilon^3$ & $\varepsilon^5$ & $\varepsilon^6$ & $\Id$ & $\varepsilon^7$ & $\varepsilon^1$ & $\varepsilon^2$ & $\varepsilon^4$\\
\hline
$\varepsilon^4$ & $\varepsilon^6$ & $\varepsilon^5$ & $\varepsilon^7$ & $\Id$ & $\varepsilon^2$ & $\varepsilon^1$ & $\varepsilon^3 $\\
\hline
$\varepsilon^5$ & $\varepsilon^3$ & $\varepsilon^4$ & $\varepsilon^2$ & $\varepsilon^1$ & $\varepsilon^7$ & $\Id$ & $\varepsilon^6$\\
\hline
$\varepsilon^6$ & $\varepsilon^4$ & $\varepsilon^3$ & $\varepsilon^1$ & $\varepsilon^2$ & $\Id$ & $\varepsilon^7$ & $\varepsilon^5$\\
\hline
$\varepsilon^7$ & $\varepsilon^2$ & $\varepsilon^1$ & $\varepsilon^4$ & $\varepsilon^3$ & $\varepsilon^6$ & $\varepsilon^5$ & $\Id$\\
\hline
\end{tabular}
\end{center}
\end{table}

The group $G$ is isomorphic to the group of symmetries of the square $\{ H = |h_1| + |h_2| = 1 \} : \varepsilon^1, \varepsilon^2$ are reflections in middle perpendiculars of sides; $\varepsilon^3, \varepsilon^4$ are reflections in diagonals;
$\varepsilon^5, \varepsilon^6, \varepsilon^7$ are respectively rotations by $\pi/2, -\pi/2, \pi$.


Any point of the plane $(h_4, h_5)$ can be transformed to a point of the angle $\Omega = \{ (h_4, h_5) \in \mathbb{R}^2 \mid h_4 \ge h_5 \ge 0 \}$. The angle $\Omega$ is a fundamental domain of the action of the group $G$ in the plane $(h_4, h_5)$.
Thus in the study of system \eq{Hamtheta} we can restrict ourselves by the case $(h_4, h_5) \in \Omega$. This case obviously decomposes into the following subcases:
\begin{enumerate}
\item[$1)$] $h_4 > h_5 >0$,
\item[$2)$] $h_4 > h_5 = 0$,
\item[$3)$] $h_4 = h_5 > 0$,
\item[$4)$] $h_4 = h_5 = 0$.
\end{enumerate}
\begin{remark}
One checks immediately that the Casimir $E$ is preserved by the symmetry group $G: E \circ \varepsilon^i = E, \quad i = 1, \dots, 7$.
\end{remark}
\subsection{Phase portrait of system \eq{Hamtheta}}
We consider system \eq{Hamtheta} as an oscillator, with the full energy
$$ E = \frac{h_3^2}{2} + h_1 h_5 - h_2 h_4 = \frac{h_3^2}{2} + U(\theta)$$
and the potential energy
$$U(\theta) = h_1 h_5 - h_2 h_4 = s_1 \cos^2 \theta h_5 - s_2 \sin^2 \theta h_4.$$
The function $U(\theta)$ is $C^1$-smooth at $\theta = \frac{\pi n}{2}$ and analytic elsewhere.

\subsubsection{Case $1)$: $h_4 > h_5 >0$}
The plot of the potential energy $U(\theta)$ is given in Fig.~\ref{fig:1)U}.

\twofiglabel{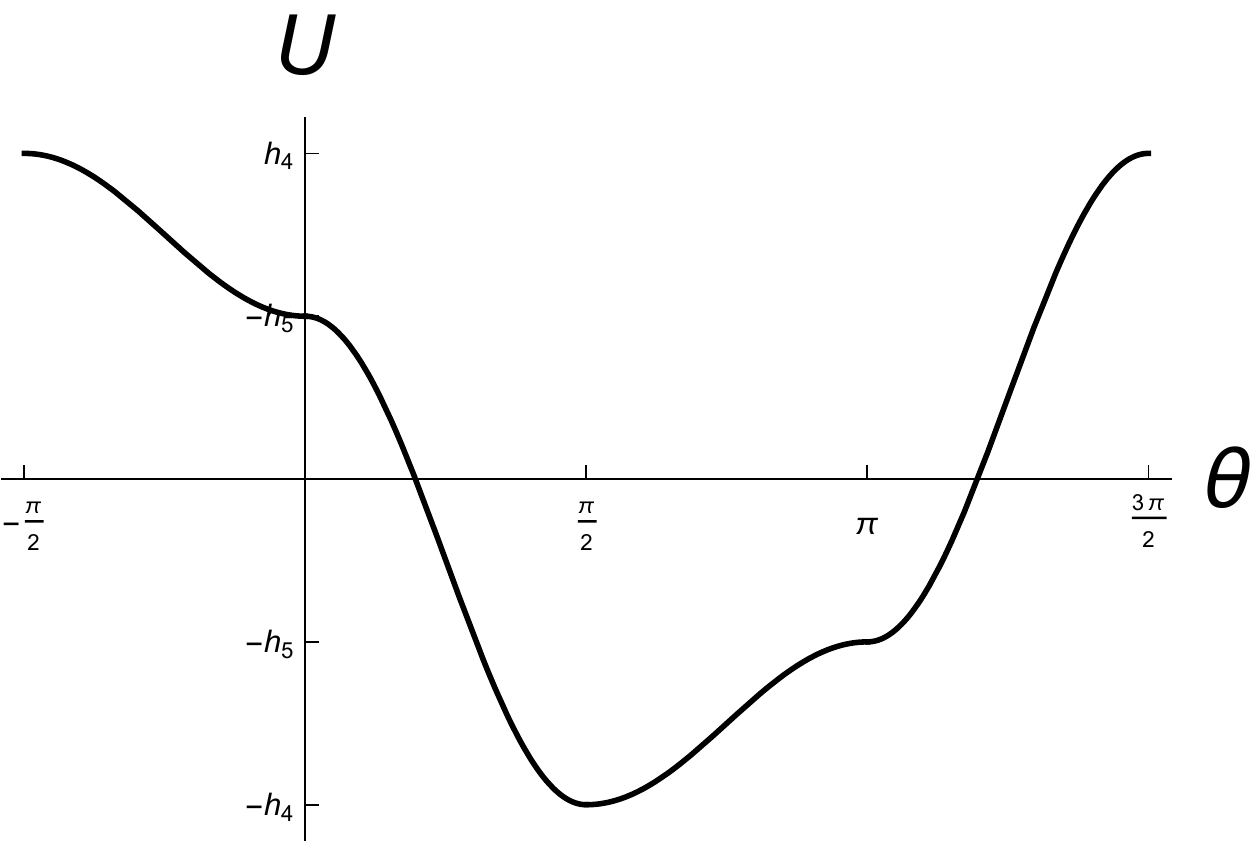}{Plot of $U(\theta)$ in case 1)}{fig:1)U}
{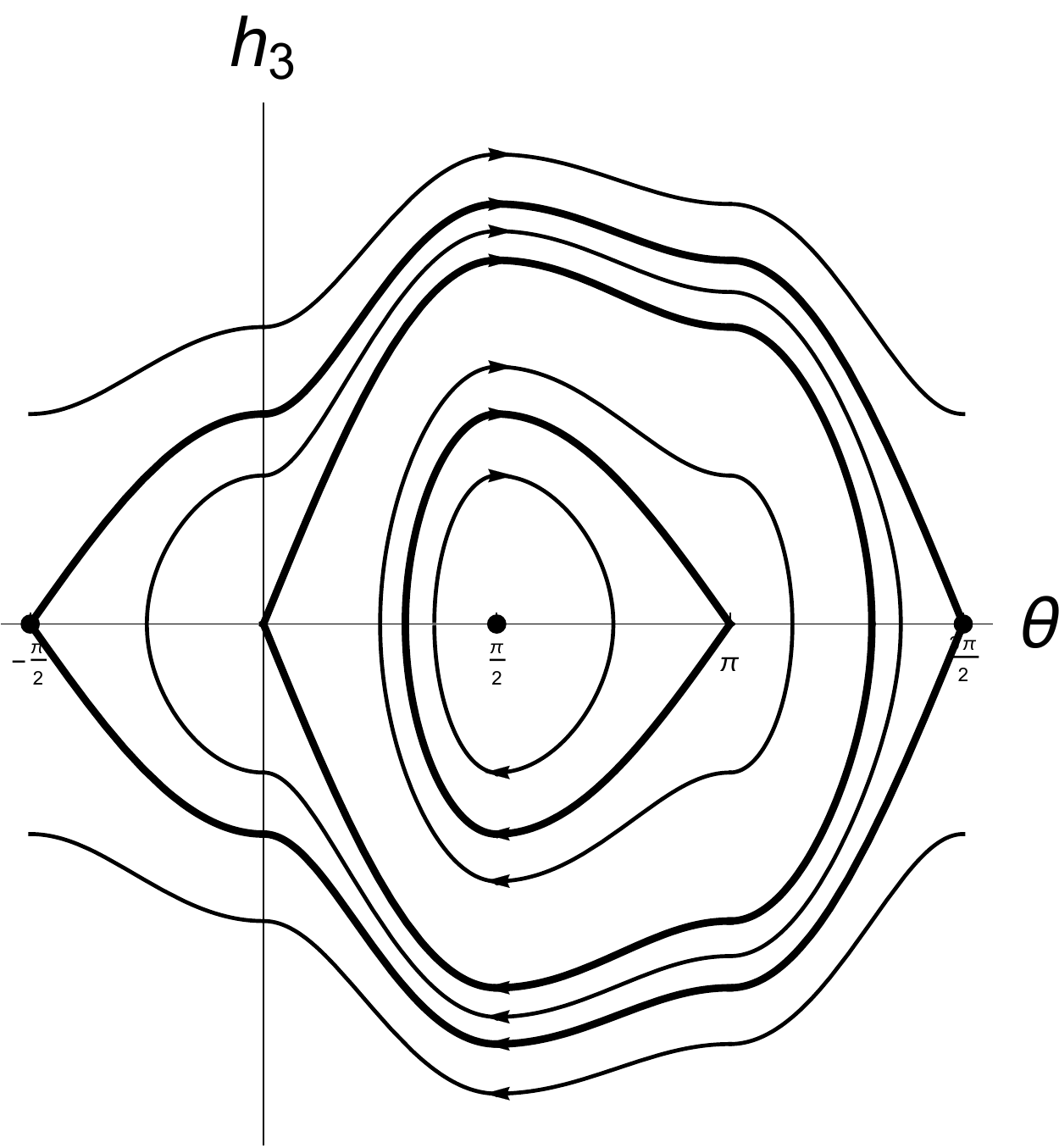}{Phase portrait of system \eq{Hamtheta} in case 1)}{fig:1)h3th}

Then the phase portrait of system \eq{Hamtheta} is drawn as a set of curves $h_3 = \pm \sqrt{2(E - U(\theta))}$, see Fig.~\ref{fig:1)h3th}.

The critical level lines of the energy $E$ are:
\begin{align*}
&C_1 = E^{-1} (-h_4),  &&C_3 = E^{-1} (-h_5),\\
&C_5 = E^{-1} (h_5),  &&C_7 = E^{-1} (h_4).
\end{align*}
The domains of regular values of energy $E$ are:
\begin{align*}
&C_2 = E^{-1} (-h_4, -h_5),  &&C_6 = E^{-1} (h_5, h_4), \\
&C_4 = E^{-1} (-h_5, h_5),  &&C_8 = E^{-1} (h_4, +\infty).
\end{align*}
Thus we get a decomposition of a section of the cylinder $ C = L^* \cap  \{ H = 1 \}$:
$$ \{ \lambda \in C \mid h_4 > h_5 > 0 \} = \cup_{i=1}^8 C_i.$$
All level lines of the energy $E$ are analytic for $\theta \ne \frac{\pi n}{2}$ and $C^1$-smooth for $\theta = \frac{\pi n}{2}, h_3 \ne 0$. The critical level lines have corners at $\theta = \frac{\pi n}{2} \ne \frac{\pi}{2}, h_3=0$. The level line $C_1$ is just a point $(\theta, h_3) = (\frac{\pi}{2}, 0)$.

\begin{remark}
Let $h_4 > h_5 > 0$ and $\lambda \in C \backslash C_7$. Then for any $T > 0$ there exists a unique bang-bang extremal $\lambda_t$, $t \in [0, T]$, with $\lambda_0 = \lambda$, and correspondingly a unique extremal trajectory $q(t) = \pi (\lambda_t)$, which we denote as $\Exp(\lambda, t)$, $t \in [0, T]$.

Further, let $h_4 > h_5 > 0$ and $\lambda \in C_7$. Then for any $T > 0$ there exists a finite number of bang-bang extremals $\{ \lambda_t^1, \dots, \lambda_t^N \}$, $t \in [0, T]$, with $\lambda_0^i = \lambda$, and correspondingly a finite number of bang-bang extremal trajectories $\{ q^1(t), \dots, q^N(t) \} = \{ \pi (\lambda_t^1), \dots, \pi(\lambda_t^N) \} =: \Exp(\lambda, t)$. Namely, for $h_4 > h_5 > 0$ and $\lambda \in C_7$ 
an extremal arc $\lambda_t, t\in (T-\varepsilon, \tau]$, splits at the points $(\theta, h_3) = (\frac{3\pi}{2}, 0)$ into two extremal arcs: $\lambda_t^+$ --- for which $h_3(\lambda_t^+) > 0$ for $t \in (\tau, \tau + \varepsilon)$, and $\lambda_t^-$ --- for which $h_3 (\lambda_t^-) < 0$ for $t \in (\tau, \tau + \varepsilon)$.

Summing up, in case $1)$: $h_4 > h_5 > 0$ we can define an exponential mapping for bang-bang trajectories: $C \times \mathbb{R}_+ \ni (\lambda, t) \mapsto \Exp (\lambda, t) \subset M$, single-valued for $\lambda \in C \backslash C_7$ and set-valued for $\lambda \in C_7$.
\end{remark}

\subsubsection{Case $2)$: $h_4 > h_5 = 0$}
The plot of the potential energy $U(\theta)$ and the corresponding phase portrait of system \eq{Hamtheta} are given respectively in Fig.~\ref{fig:2)U} and in Fig.~\ref{fig:2)h3th}.
\twofiglabel
{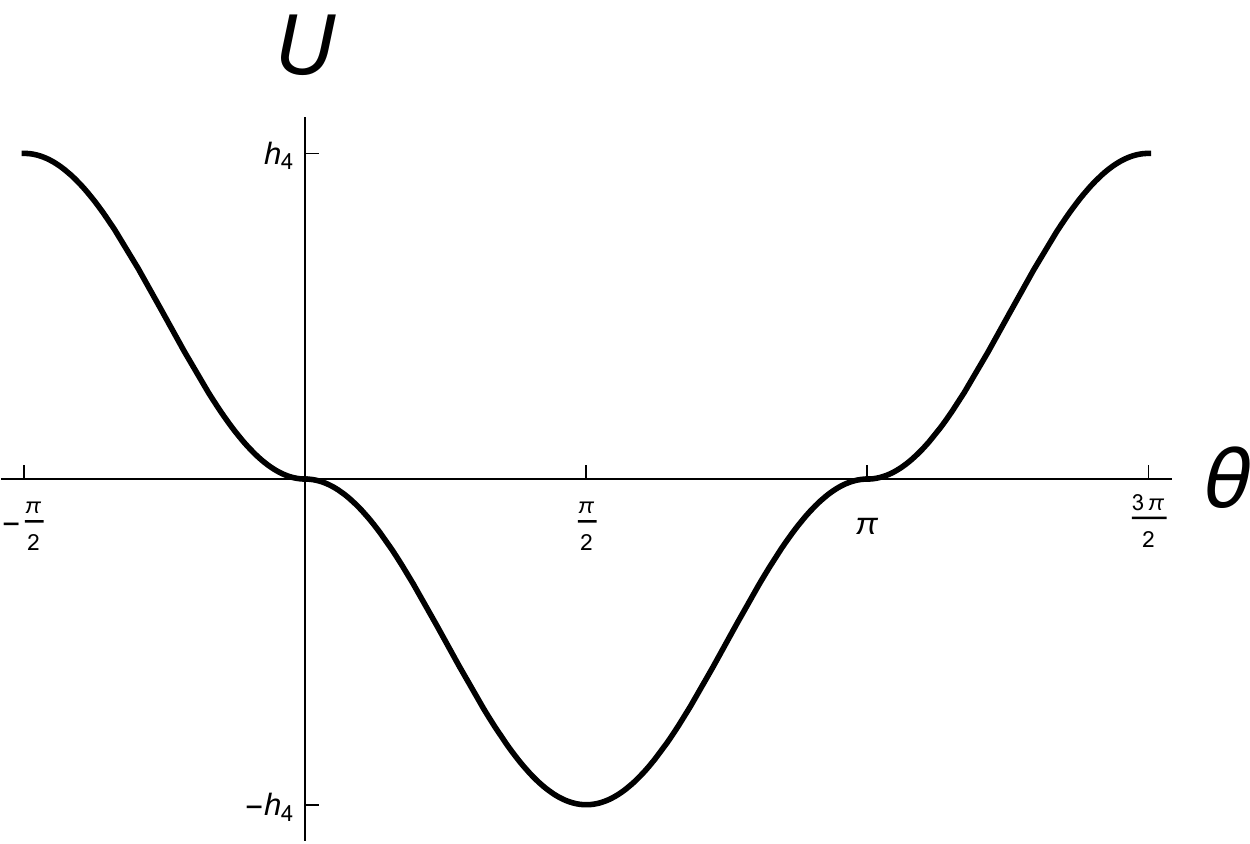}{Plot of $U(\theta)$ in case 2)}{fig:2)U}
{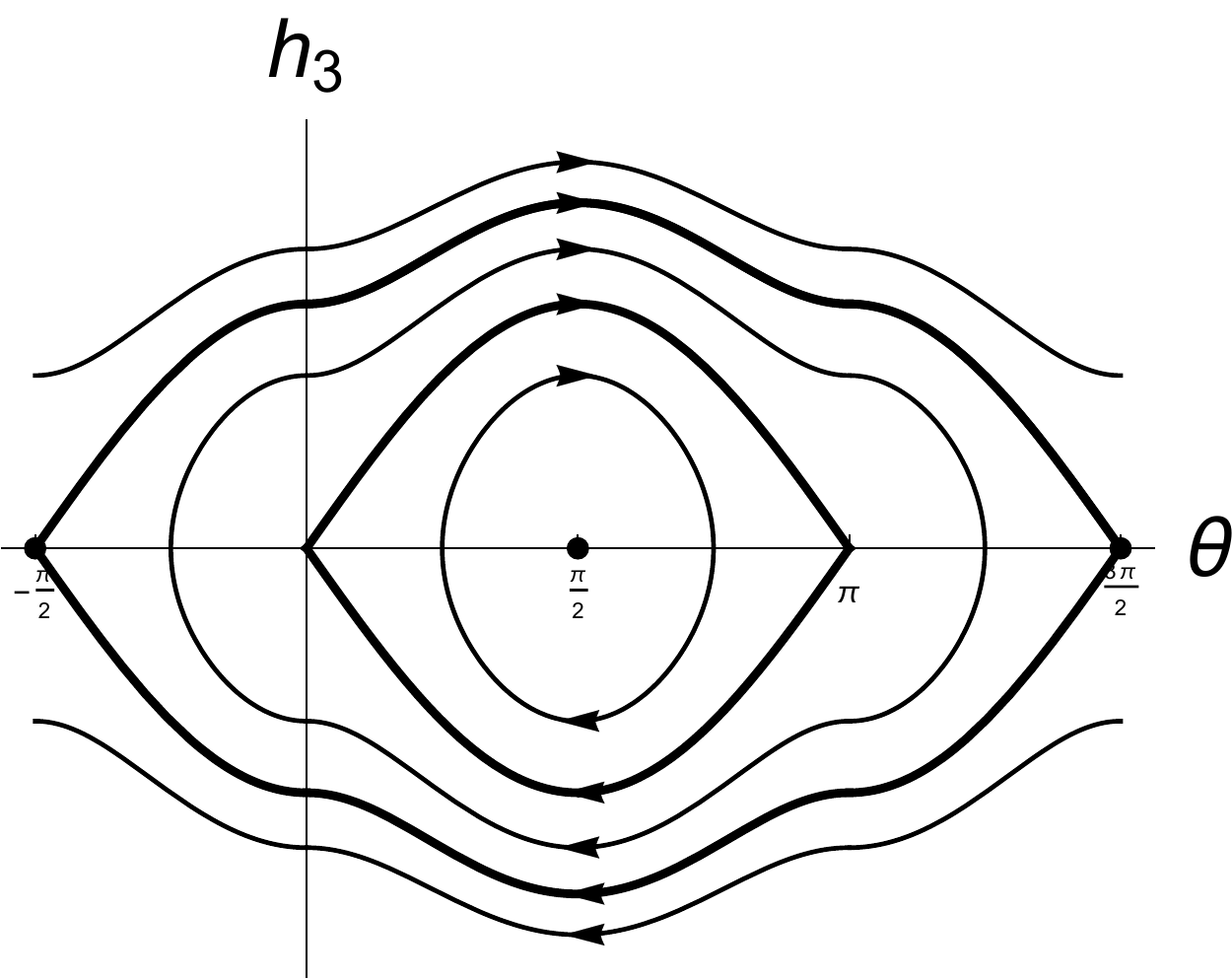}{Phase portrait of system \eq{Hamtheta} in case 2)}{fig:2)h3th}

The phase portrait contains the critical level lines of the energy $E$:
$$ C_1 = E^{-1} (-h_4), \quad C_3 = E^{-1} (0),  \quad  C_5 = E^{-1} (h_4), $$
and the domains of regular values of $E$:
$$ C_2 = E^{-1} (-h_4, 0), \quad C_4 = E^{-1} (0, h_4), \quad C_6 = E^{-1} (h_4, +\infty). $$
Thus we have a decomposition:
$$
\{ \lambda \in C \mid h_4 > h_5 = 0 \} = \cup_{i=1}^6 C_i.
$$
\begin{remark}
Similarly to case $1)$, we can define a single-valued exponential mapping for $h_4 > h_5 = 0$, $E \ne h_4$, and a set-valued exponential mapping for $h_4 > h_5 = 0$, $E = h_4$.
\end{remark}

\subsubsection{Case $3)$: $h_4 = h_5 > 0$}
The plot of $U(\theta)$ and the phase portrait of \eq{Hamtheta} are given in Fig.~\ref{fig:3)U} and in Fig.~\ref{fig:3)h3th} respectively.
\twofiglabel
{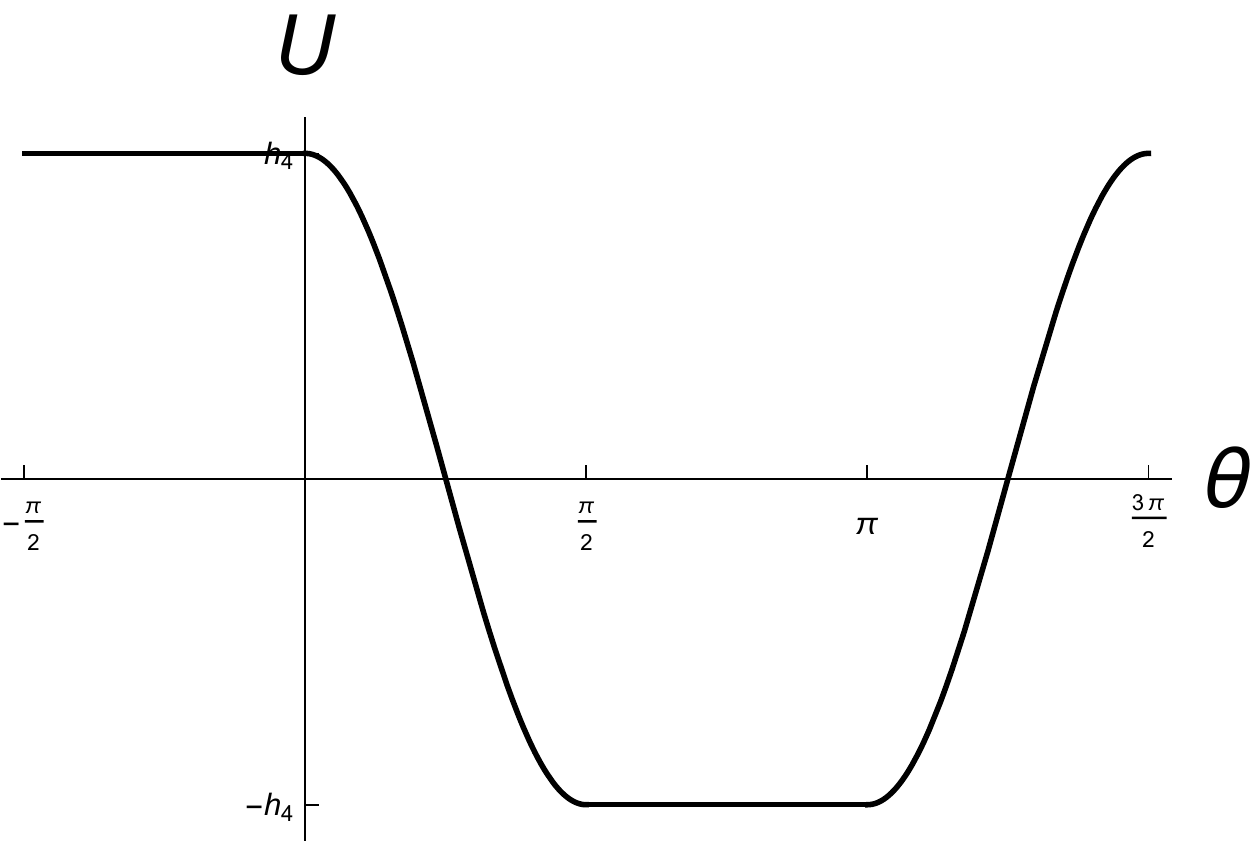}{Plot of $U(\theta)$ in case 3)}{fig:3)U}
{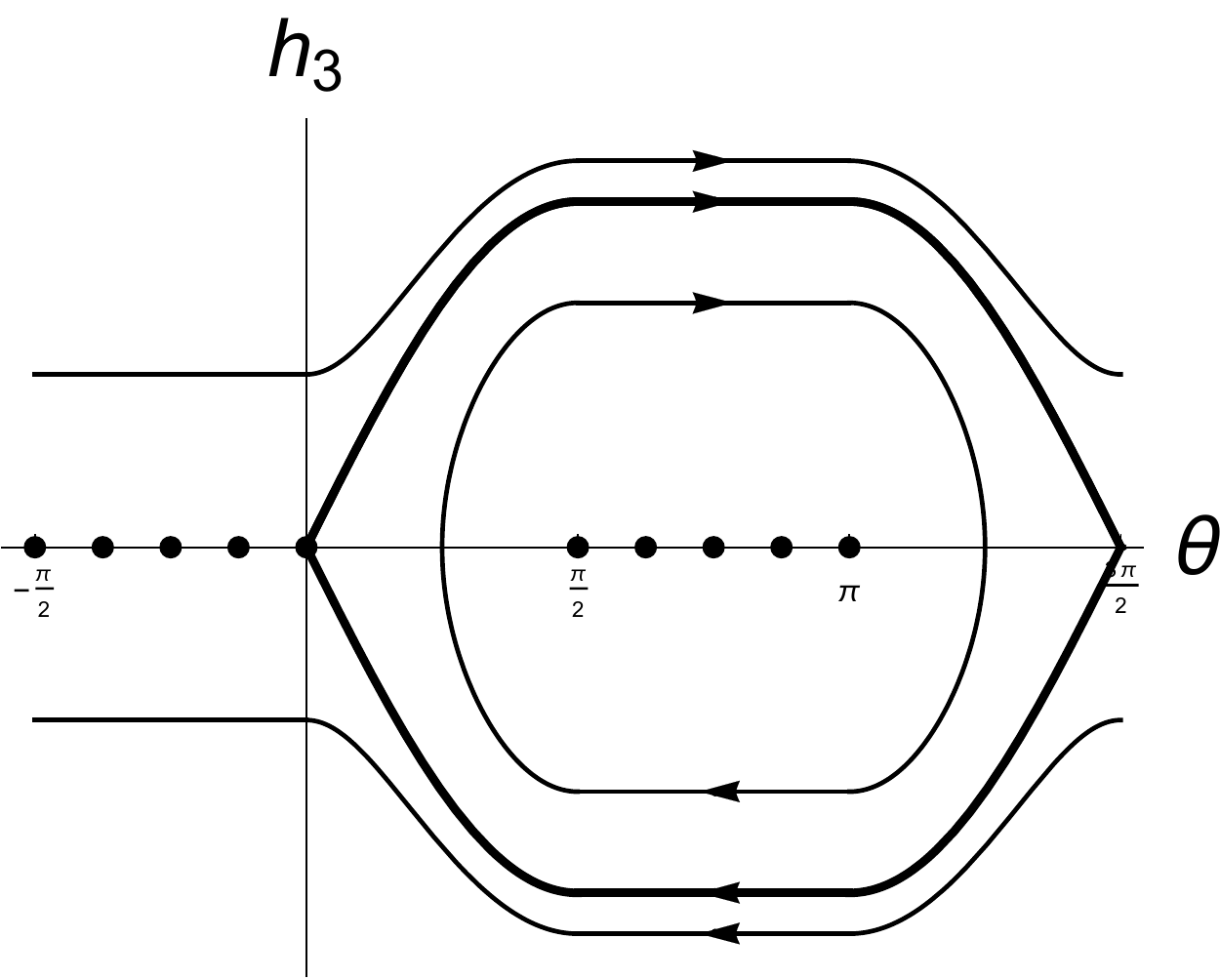}{Phase portrait of system \eq{Hamtheta} in case 3)}{fig:3)h3th}


The phase portrait contains the critical level lines of the energy $E$:
$$ C_1 = E^{-1} (-h_4),   \quad C_3 = E^{-1} (h_4), $$
and the domains of regular values of $E$:
$$ C_2 = E^{-1} (-h_4, 0), \quad C_4 = E^{-1}   (h_4, +\infty). $$
Thus we have a decomposition:
$$
\{ \lambda \in C \mid h_4 = h_5 > 0 \} = \cup_{i=1}^4 C_i.
$$

\begin{remark}
Similarly to case $1)$, a single-valued exponential mapping is defined for $h_4 = h_5 > 0$.
\end{remark}

\subsubsection{Case $4)$: $h_4 = h_5 = 0$}
Finally in case $4)$ we have $U(\theta) \equiv 0$, see Fig.~\ref{fig:4)U}, and the phase portrait of \eq{Hamtheta}  is in  Fig.~\ref{fig:4)h3th}.
\twofiglabel
{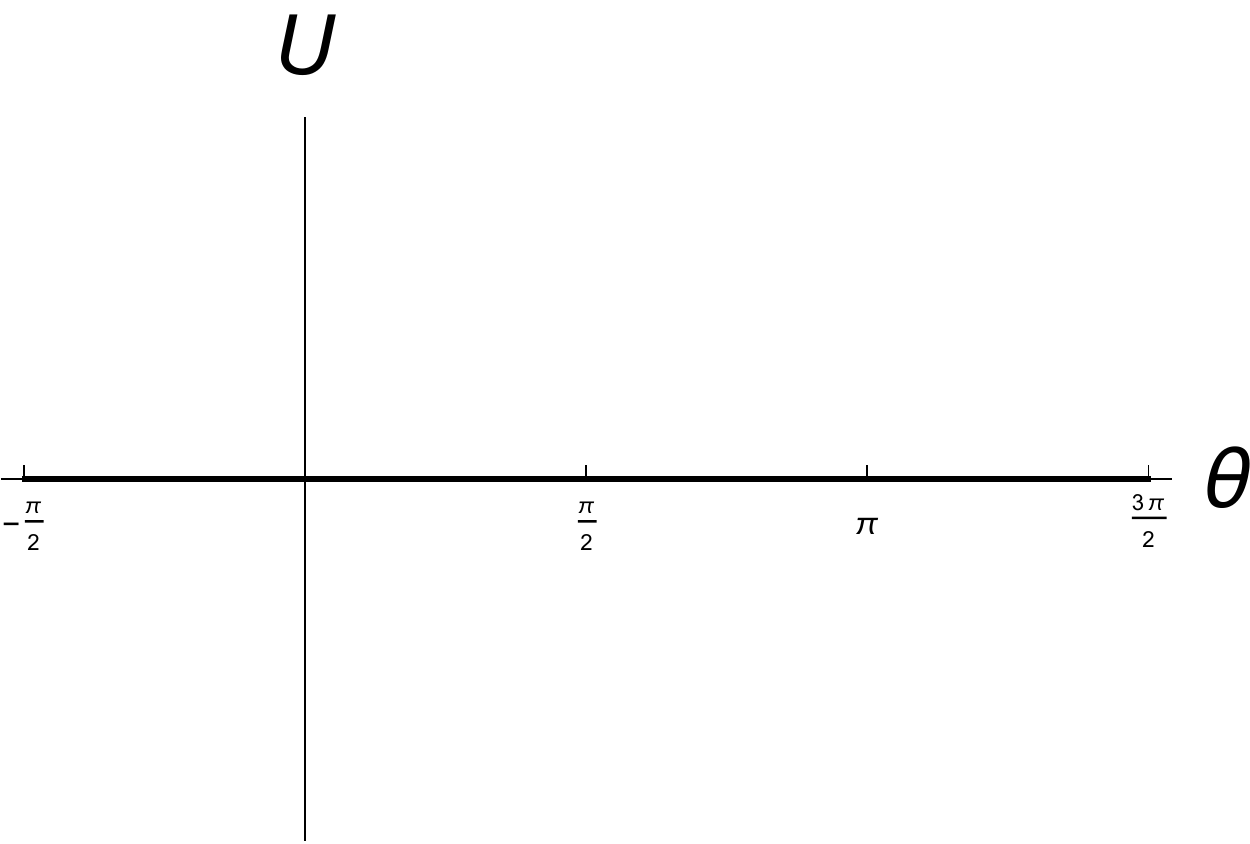}{Plot of $U(\theta)$ in case 4)}{fig:4)U}
{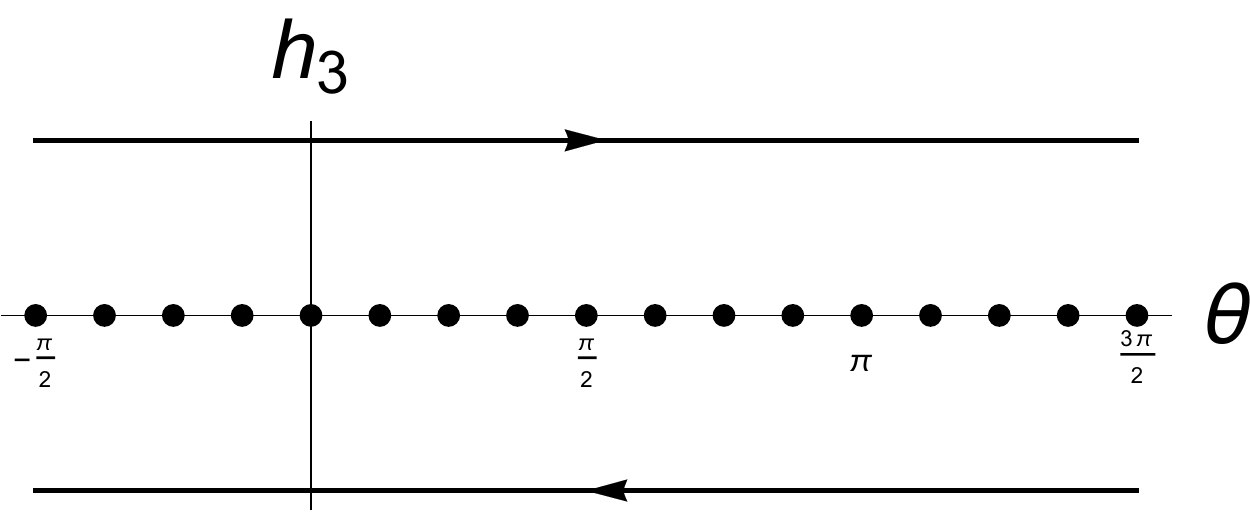}{Phase portrait of system \eq{Hamtheta} in case 4)}{fig:4)h3th}

The critical level line $C_1 = E^{-1}(0)$ consists of fixed points, and the domain of regular values of energy is $C_2 = E^{-1}(0, +\infty)$. We have
$$
\{ \lambda \in C \mid h_4 = h_5 = 0 \} = C_1 \cup C_2.
$$

\begin{remark}
Let $\lambda \in C$, $h_4 = h_5 = 0$. Then for any $T > 0$ there is uniquely defined a solution $\lambda_t$ of \eq{Hamtheta} with $\lambda_0 = \lambda$, and respectively there is a uniquely defined $q(t) = \pi(\lambda_t) = \Exp(\lambda, t)$.
\end{remark}

\subsection{Cut time along bang-bang trajectories}
Summing up remarks at the end of the previous four subsubsections, we get the following statement.

\begin{prop}
\label{propos:flow}
Let $\lambda \in C = L^* \cap \{ H=1 \}$, and let $h_4 \geq h_5 \geq 0$. If $E \ne h_4 > h_5$ or $h_4 = h_5$, then for any $t > 0$ there exists a unique solution $\lambda_t$ to system \eq{Hamtheta} with $\lambda_0 = \lambda$, and
respectively a unique bang-bang trajectory $q(t) = \pi(\lambda_t) =: \Exp(\lambda, t)$. If $E = h_4 > h_5$, then for any $T>0$ there exist a finite number of solutions $\{ \lambda_t^1, \dots, \lambda_t^N\}, t \in [0, T]$, to system \eq{Hamtheta}, with $\lambda_0^1 = \dots = \lambda_0^N = \lambda$, and respectively a finite number of bang-bang trajectories $\{ q^1(t), \dots, q^N(t) \} = \{ \pi(\lambda_t^1), \dots,  \pi(\lambda_t^N)\} =: \Exp(\lambda, t).$
\end{prop}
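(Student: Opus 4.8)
The statement is a synthesis of the case analysis of the previous four subsubsections, so the plan is to record precisely how existence and (non)uniqueness of the flow of \eqref{Hamtheta} follow from the phase portraits, and then to assemble the four subcases of the fundamental domain $\Omega = \{ h_4 \geq h_5 \geq 0 \}$ (Proposition \ref{propos:symmetr}) into the stated dichotomy. First I would note that, by Proposition \ref{prop:Hamtheta}, every bang-bang arc obeys \eqref{Hamtheta}, which is a one-degree-of-freedom conservative system with energy $E = \frac{h_3^2}{2} + U(\theta)$; hence every trajectory lies on a connected component of a level set $\{ E = \const \}$ in the $(\theta, h_3)$-cylinder, and existence of a forward solution for all $t > 0$ is immediate from the drawn portraits (equivalently, from the fact that the underlying field \eqref{Ham_bang} is linear, hence analytic, on each bang region $\{ h_1 h_2 \ne 0 \}$). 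The entire content is therefore uniqueness, and uniqueness can fail only at the non-smooth points of the right-hand side of \eqref{Hamtheta}, namely the switching corners $\theta = \frac{\pi n}{2}$.

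Next I would separate two behaviours at these corners. If a trajectory meets a corner with $h_3 \ne 0$, then $\dot\theta$ keeps a definite sign there, so $\theta$ is monotone across the corner and the crossing is transversal; since the field is analytic on each adjacent quadrant, the continuation is unique. The only delicate case is a corner reached with $h_3 = 0$, i.e. at a critical value of $U$. The decisive observation is that, because $h_3 = \pm\sqrt{2(E - U)}$, a real one-sided continuation exists only on a side where $U \le E$; hence two distinct forward continuations --- and thus possible non-uniqueness --- occur precisely at a genuine local maximum of $U$, where $U < E$ on both sides, whereas at a degenerate monotone-passage corner $U$ exceeds $E$ on one side and the continuation is forced. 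Classifying the four corner values $U(\frac{\pi}{2}) = -h_4$, $U(\pi) = -h_5$, $U(0) = h_5$, $U(\frac{3\pi}{2}) = h_4$ together with their one-sided second derivatives, I would check that in cases 1) and 2) the only genuine local maximum is $\theta = \frac{3\pi}{2}$ at the level $E = h_4$, the corners $\theta = 0, \pi$ being degenerate and $\theta = \frac{\pi}{2}$ the minimum. At $\theta = \frac{3\pi}{2}$, using that near the corner both $|\sin 2\theta|$ and $h_3 = \sqrt{2(h_4 - U)}$ vanish linearly in $\theta - \frac{3\pi}{2}$, I would show that $\dot\theta = h_3/|\sin 2\theta|$ tends to a nonzero constant, so the maximum is attained in finite time and the forward solution genuinely splits into the branch with $h_3 > 0$ and the branch with $h_3 < 0$, exactly the splitting recorded in the remark for case 1). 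This yields non-uniqueness precisely on $\{ E = h_4 \}$ when $h_4 > h_5$.

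Finally I would bound the multiplicity and dispose of the boundary subcases. On the level $E = h_4 > h_5$ the trajectory returns to the corner $\theta = \frac{3\pi}{2}$ only after traversing a full period of the level-$h_4$ orbit, and this period is bounded below by a positive constant; hence on any $[0, T]$ there are at most finitely many splitting instants, each at most doubling the number of branches, so the number $N$ of extremals is finite. For the subcases $h_4 = h_5$ (cases 3) and 4)) I would verify that the maximal value of $U$ is attained not at an isolated strict maximum but on an arc of equilibria of \eqref{Hamtheta}, where $U$ is constant and $\dot h_3 = s_1 h_4 + s_2 h_5 = 0$; an approaching trajectory then reaches the boundary equilibrium and continues as the constant solution, so no branching occurs and the flow stays single-valued, matching the remarks for cases 3) and 4). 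Collecting the four subcases gives uniqueness exactly when $E \ne h_4 > h_5$ or $h_4 = h_5$, and a finite set of solutions exactly when $E = h_4 > h_5$. The main obstacle is the corner analysis at $\theta = \frac{3\pi}{2}$: proving that the maximum is reachable in finite time and that the two one-sided continuations are genuinely distinct solutions (so the splitting is real and not an artifact of the reparametrization), together with confirming that every other critical configuration --- the degenerate corners of cases 1)--2) and the equilibrium arcs of cases 3)--4) --- admits a unique continuation.
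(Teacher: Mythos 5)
Your route is the same as the paper's: the paper proves Proposition~\ref{propos:flow} only by ``summing up'' the remarks attached to the four phase portraits over the fundamental domain $\Omega$, and your reduction of uniqueness to the corners $\theta=\tfrac{\pi n}{2}$ with $h_3=0$, the classification of those corners by the one-sided position of $U$ relative to $E$ (reflection versus genuine local maximum), the computation that $\dot\theta=h_3/|\sin 2\theta|$ tends to a nonzero limit at $(\tfrac{3\pi}{2},0)$ so that the separatrix level $E=h_4>h_5$ is reached in finite time and genuinely branches, and the lower bound on the return time to the splitting point to get $N<\infty$, are exactly the details the paper delegates to Figures~\ref{fig:1)h3th}--\ref{fig:4)h3th}. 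Those parts are correct and in fact more explicit than the paper.

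There is, however, one step that would fail as written: your treatment of case $3)$, $h_4=h_5>0$. You claim that a trajectory on the critical level $E=h_4$ ``reaches the boundary equilibrium and continues as the constant solution.'' The endpoints of the arc of equilibria are the corners $(\theta,h_3)=(\tfrac{3\pi}{2},0)$ and $(0,0)$, where $h_1=0$ (resp.\ $h_2=0$); a solution constant on a time interval there has $h_1\equiv 0$ (resp.\ $h_2\equiv 0$) and is therefore a singular, not a bang-bang, arc, and it is not a continuation of the vertical subsystem anyway, since in the adjacent quadrant $\dot h_3=s_1h_4+s_2h_5=\mp(h_4+h_5)\neq 0$ forces $h_3$ off zero. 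The actual unique continuation (the one shown in Fig.~\ref{fig:3)h3th}) is a reflection back into the quadrant the trajectory came from; only the \emph{interior} points of the equilibrium arc, where $h_1h_2\neq 0$, carry genuine constant bang-bang solutions. This is not a cosmetic point: to prove single-valuedness in case $3)$ you must exhibit the reflection as a continuation \emph{and} exclude the stationary one, since admitting both would give two forward continuations and contradict the uniqueness you are asserting. (Case $4)$ is unaffected, because there $h_3$ is a first integral and every solution with $h_3(0)=0$ is constant.)
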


So there is a defined an exponential mapping for bang-bang trajectories, set-valued in the general case. Then we can define, similarly to sub-Riemannian geometry, the cut time along bang-bang trajectories: 
$$t_{\cut} := \sup \{ T> 0 \mid \text{ at least one of the trajectories } \Exp(\lambda, t) \text{ is optimal for } t \in [0, T] \}.
$$
By Lemma \ref{lem:ui=+-1}, small bang-bang arcs are optimal, i.e., $t_{\cut} (\lambda) > 0$ for any $\lambda \in C$.

\section{Conclusion}
In this paper we started a study of the $\ell_{\infty}$ sub-Finsler problem on the Cartan group. Many questions remain unsolved, e.g.:
\begin{itemize}
\item
optimality of bang-bang and mixed trajectories,
\item
uniform bounds on the number of smooth arcs of minimizers that connect points in the Cartan group,
\item
regularity of sub-Finsler distance and sphere.
\end{itemize}
We postpone study of these questions to forthcoming papers.

\section*{Acknowledgments}

The authors thank Prof. Andrei Agrachev and Prof. Lev Lokutsievsky for fruitful discussions of sub-Finsler geometry.

	\providecommand{\noopsort}[1]{#1}
\providecommand{\bysame}{\leavevmode\hbox to3em{\hrulefill}\thinspace}
\providecommand{\MR}{\relax\ifhmode\unskip\space\fi MR }
\providecommand{\MRhref}[2]{%
  \href{http://www.ams.org/mathscinet-getitem?mr=#1}{#2}
}
\providecommand{\href}[2]{#2}

\end{document}